\documentclass[11pt,reqno]{amsart}

\usepackage{geometry}
\usepackage[hidelinks]{hyperref}
\usepackage{placeins}

\usepackage{amsmath,amsthm,amssymb,amsfonts,bbm,graphicx,caption}
\usepackage{xcolor}
\usepackage[ruled,vlined,linesnumbered]{algorithm2e}
\usepackage{enumitem}

\theoremstyle{plain}
\newtheorem{theorem}{Theorem}[section]
\newtheorem{proposition}{Proposition}[section]
\newtheorem{lemma}{Lemma}[section]

\newtheorem{assumption}{Assumption}[section]

\theoremstyle{definition}
\newtheorem{definition}{Definition}[section]

\newtheorem{remark}{Remark}[section]

\newcommand\independent{\protect\mathpalette{\protect\independenT}{\perp}}
\def\independenT#1#2{\mathrel{\rlap{$#1#2$}\mkern2mu{#1#2}}}

\newcommand{\R}{\mathbb{R}}

\newcommand{\N}{\mathbb{N}}
\newcommand{\E}{\mathbb{E}}

\newcommand{\calX}{\mathcal{X}}
\newcommand{\calY}{\mathcal{Y}}
\newcommand{\calC}{\mathcal{C}}
\newcommand{\calP}{\mathcal{P}}

\newcommand{\calQ}{\mathcal{Q}}
\newcommand{\calU}{\mathcal{U}}

\newcommand{\calN}{\mathcal{N}}

\newcommand{\calH}{\mathcal{H}}

\renewcommand{\tilde}{\widetilde}
\renewcommand{\hat}{\widehat}

\newcommand{\tildef}{\tilde{f}}
\newcommand{\tildeg}{\tilde{g}}
\newcommand{\tildeh}{\tilde{h}}

\newcommand{\barf}{\bar{f}}
\newcommand{\barg}{\bar{g}}
\newcommand{\barh}{\bar{h}}

\DeclareMathOperator{\op}{op}

\DeclareMathOperator{\diam}{diam}
\DeclareMathOperator{\argmax}{argmax}

\newcommand{\KL}[2]{\mathsf{KL}(#1 \,\|\, #2 )}

\begin{document}
\title[Sinkhorn algorithms for entropic VQR]{Sinkhorn algorithms for entropic vector quantile regression}

\thanks{K. Kato is partially supported by NSF grant DMS-2413405.}

\author[K. Kato]{Kengo Kato}
\address[K. Kato]{
Department of Statistics and Data Science, Cornell University.
}
\email{kk976@cornell.edu}

\author[B. Wang]{Boyu Wang}
\address[B. Wang]{
Department of Statistics and Data Science, Cornell University.
}
\email{bw563@cornell.edu}

\begin{abstract}
Vector quantile regression (VQR) is an optimal transport (OT)-based framework that extends linear quantile regression to vector-valued response variables and can be formulated as an OT problem with a mean-independence constraint. In this paper, we study Sinkhorn-type algorithms for VQR with entropic regularization, building on the authors' previous work on its duality theory. We first study a direct adaptation of the classical Sinkhorn iteration based on solving the full Schr\"{o}dinger-type system characterizing the dual potentials, which requires solving an implicit functional equation at each iteration. Since the implicit equation does not admit a closed-form solution in general, we consider a modified algorithm that replaces the implicit update with a gradient ascent step, resulting in a computational scheme that does not involve any subroutine. The latter algorithm can be viewed as an implementable version of the former, idealized scheme that serves as a theoretical benchmark. For both algorithms, and for general compactly supported marginals, we establish linear convergence in both the dual objective value and the iterates. A key innovation in our analysis is the derivation of explicit quantitative bounds on the dual potentials and Sinkhorn iterates.
\end{abstract}
\keywords{entropic regularization, optimal transport, gradient ascent, Sinkhorn algorithm, vector quantile regression}
\subjclass[2020]{49Q22, 62G08, 90C25}
\date{First version: March 21, 2026. This version: \today}

\maketitle

\section{Introduction}
\subsection{Overview}

Optimal transport (OT) offers a versatile framework for comparing probability distributions and has seen a surge of applications in statistics, machine learning, and applied mathematics \cite{villani2009optimal,santambrogio2015optimal,panaretos2020invitation,chewi2025statistical}.
For given Borel probability measures $\mu,\nu$ on Polish metric spaces $\mathsf{X},\mathsf{Y}$, respectively, and a Borel nonnegative cost function $c: \mathsf{X} \times \mathsf{Y} \to [0,\infty)$, the Kantorovich OT problem is given by
\begin{equation}
\inf_{\pi \in \Pi(\mu,\nu)} \int_{\mathsf{X} \times \mathsf{Y}} c \, d\pi,
\label{eq: OT}
\end{equation}
where $\Pi(\mu,\nu)$ denotes the collection of couplings for $(\mu,\nu)$. Recall that any coupling $\pi \in \Pi(\mu,\nu)$ is a joint distribution with marginals $\mu,\nu$.

Among many statistical applications of OT, the seminal work by \cite{carlier2016vector} proposed an OT-based approach to extending linear quantile regression \cite{koenker1978regression, koenker2005quantile} to vector-valued response variables. 
Let $(X,Y) \in \R^{d_x + d_y}$ be a pair of covariate and response vectors, and let $\mu$ be a reference distribution on $\R^{d_y}$. Denoting by $\nu$ the joint distribution of $(X,Y)$, the vector quantile regression (VQR) problem introduced in \cite{carlier2016vector} consists of minimizing, among all couplings $\pi \in \Pi(\mu,\nu)$, the expected cost $\int c \, d\pi$ with $c(u,y) := \|u-y\|^2/2$, subject to a \textit{mean-independence} constraint\footnote{When we generate a version of $(X,Y)$ coupled with $U \sim \mu$, we use $(\tilde{X},\tilde{Y})$.},
\begin{equation}
\inf_{\pi \in \Pi(\mu,\nu)} \left \{\E\big[c(U,\tilde{Y})\big] : (U,\tilde{X},\tilde{Y}) \sim \pi, \ \E\big[\tilde{X} \mid U\big] = \E\big[\tilde{X}\big] \ \text{a.s.} \right \}.
\label{eq: vqr}
\end{equation}
When $d_y=1$, and under regularity conditions, the VQR problem (\ref{eq: vqr}) reduces to classical linear quantile regression; see Theorem 3.3 in \cite{carlier2016vector} and their follow-up work \cite{carlier2017vector}.

Since the work of \cite{cuturi2013sinkhorn}, entropic regularization has been widely used to approximately solve the OT problem (\ref{eq: OT}), as it enables efficient computation via the \textit{Sinkhorn algorithm} \cite{peyre2019computational}. For the general OT problem (\ref{eq: OT}), entropic regularization amounts to solving
\begin{equation}
\inf_{\pi \in \Pi(\mu,\nu)} \int_{\mathsf{X} \times \mathsf{Y}} c \, d\pi + \varepsilon \KL{\pi}{\mu \otimes \nu},
\label{eq: EOT}
\end{equation}
where $\varepsilon > 0$ is a regularization parameter and $\mathsf{KL}$ denotes the \textit{Kullback-Leibler divergence} (or relative entropy) defined by 
\begin{equation*}
\KL{P}{Q} := 
\begin{cases}
\int \log \frac{dP}{dQ}\, dP, & \text{if $P \ll Q$}, \\
\infty, & \text{otherwise}. 
\end{cases}
\end{equation*}
Under suitable conditions on the marginals (cf. \cite{nutz2021introduction}), the entropic OT problem (\ref{eq: EOT}) admits a unique optimal solution $\pi$, which has a density of the form
\begin{equation*}
\frac{d\pi}{d(\mu \otimes \nu)} (x,y) = e^{(f(x)+g(y)-c(x,y))/\varepsilon}. 
\end{equation*}
The functions $(f,g)$ solve the dual problem for (\ref{eq: EOT}) and are characterized by the system of equations,  known as the \textit{Schr\"{o}dinger system}, 
\begin{equation*}
\begin{split}
f(x) &= -\varepsilon \log \int e^{(g(y)-c(x,y))/\varepsilon} \, d\nu(y), \\
g(y) &= -\varepsilon \log \int e^{(f(x)-c(x,y))/\varepsilon} \, d\mu(x).
\end{split}
\end{equation*}
The Sinkhorn algorithm iteratively solves these two equations, which correspond to the Euler-Lagrange equations for the dual problem. Thus, the Sinkhorn algorithm can be viewed as dual block coordinate ascent. Although the algorithm itself dates back to the 1960s \cite{sinkhorn1967diagonal}, it has attracted substantial attention in recent years; see the literature review below. 

 For the VQR problem (\ref{eq: vqr}), the prior work \cite{carlier2022vector} considered entropic regularization and used gradient descent to solve the dual problem in the discrete setting. However, that work did not provide formal guarantees for the application of gradient descent to entropic VQR.
 The authors' recent work \cite{kato2026entropic} studied duality theory for entropic VQR in detail (see also \cite{carlier2025weak}), which reads
\begin{equation}
\inf_{\pi \in \Pi(\mu,\nu)} \left \{\E\big[c(U,\tilde{Y})\big] + \varepsilon \KL{\pi}{\mu \otimes \nu} : (U,\tilde{X},\tilde{Y}) \sim \pi, \ \E\big[\tilde{X} \mid U\big] = \E\big[\tilde{X}\big] \ \text{a.s.} \right \}.
\label{eq: evqr}
\end{equation}
In particular, \cite{kato2026entropic} showed that, under regularity conditions, the entropic VQR problem (\ref{eq: evqr}) admits a unique optimal solution $\pi$, which has a density of the form
\begin{equation*}
\frac{d\pi}{d(\mu\otimes \nu)}(u,x,y) = e^{(f(u)+\langle g(u),x\rangle + h(x,y)-c(u,y))/\varepsilon},
\end{equation*}
where $f: \R^{d_y} \to \R, g: \R^{d_y} \to \R^{d_x}$, and $h: \R^{d_x+d_y} \to \R$ are dual potentials solving the dual problem for (\ref{eq: evqr}). These dual potentials are characterized by a system of equations analogous to the Schr\"{o}dinger system, except that the equation corresponding to the vector-valued potential 
$g$ is implicit; see equations (\ref{eq: f})--(\ref{eq: h}) below. The presence of the extra implicit equation creates a significant challenge for both algorithm design and convergence analysis in entropic VQR.

In this paper, we study Sinkhorn-type algorithms for entropic VQR (\ref{eq: evqr}). 
We first study a direct adaptation of the classical Sinkhorn iteration based on \textit{exactly} solving the full Schr\"{o}dinger-like system characterizing the dual potentials. Since the update in the $g$-direction is not available in closed-form, we consider a modified algorithm that replaces the implicit update for the vector-valued potential with a gradient ascent step, resulting in a computational scheme that does not involve any subroutine. The latter algorithm can be viewed as an implementable version of the former that serves as a theoretical benchmark. The modified scheme is related to SISTA from  \cite{carlier2023sista}; see the literature review below for a comparison.

The main contribution of this work is to establish linear convergence of these algorithms, both in terms of the dual objective value and the iterates. In particular, our analysis allows for general marginals with compact supports and is not restricted to the discrete setting.
To the best of the authors' knowledge, this is the first paper to establish rigorous convergence guarantees for Sinkhorn-based algorithms for entropic VQR. 
A key innovation in our analysis is the derivation of explicit quantitative bounds on the dual potentials and Sinkhorn iterates, which is highly nontrivial and requires new ideas compared with standard entropic OT; see the proofs of Propositions \ref{prop: potential bound}, \ref{prop: sinkhorn potential}, and \ref{prop: sinkhorn potential modified}. In particular, for the modified Sinkhorn algorithm, it is \textit{a priori} unclear whether suitable coercivity estimates hold for the dual objective in the $g$-direction, which poses a challenge in establishing uniform-in-time estimates of the gradient of the dual objective in the $g$-direction, as well as finding a condition on the step size used in gradient ascent for which monotonicity of the dual objective values holds. 
Partly inspired by the proof of Proposition~2.7 in \cite{kato2026entropic}, we establish such coercivity estimates using theory of exponential families (cf. Chapter 3 in \cite{wainwright2008graphical}), which may be of independent interest; see Lemma~\ref{lem: monotonicity modified} and the remark following it. Combined with an induction argument, we establish quantitative bounds on the iterates in the $g$-direction, which yield bounds on other potentials. 

Given such quantitative estimates on the iterates, we establish linear convergence of both algorithms, by drawing on various modern techniques in optimization from \cite{attouch2013convergence,karimi2016linear,bolte2017error,lewis2025complexity}, among others.
Specifically, for both algorithms, we establish (i) (versions of) \textit{Polyak-{\L}ojasiewicz (PL) inequalities} for the dual objective along the iterates, and (ii) \textit{slope-ascent} conditions for the iterates. Having established these properties, the linear convergence results follow by adapting the analysis in \cite{bolte2017error}.
Finally, we conduct numerical experiments to evaluate the empirical performance of the modified Sinkhorn algorithm using both synthetic and real data, which is largely consistent with our theoretical results, although data-adaptive step selection rules would be left for further investigation.

\subsection{Related literature}
Quantiles and their related notions such as ranks play fundamental roles in statistics and probability. There is a large literature on extending these concepts from univariate random variables to vector-valued ones, where the main difficulty lies in the fact that there is no canonical ordering in $\R^d$ when $d \ge 2$; see \cite{hallin2021distribution,hallin2022measure,hallin2024multivariate}. Among others, \cite{chernozhukov2017monge} proposed viewing the \textit{Brenier map} \cite{brenier1991polar}, transporting an absolutely continuous reference measure $\mu$ on $\R^{d_y}$ onto the law of $Y \in \R^{d_y}$, as a vector quantile function of $Y$; see also \cite{ghosal2022multivariate}. The aforementioned references have established desirable structural properties of OT-based vector quantiles and ranks, which provide compelling arguments for the use of OT in modern data analysis. In the presence of covariates $X \in \R^{d_x}$, \cite{carlier2016vector} considered the conditional vector quantile function as the conditional Brenier map, transporting the reference measure $\mu$ onto the conditional distribution of $Y$ given $X$, which amounts to solving the Kantorovich OT problem with the \textit{independence constraint}, i.e., the problem (\ref{eq: vqr}) with the mean independence constraint $\E[\tilde{X} \mid U] = \E[\tilde{X}]$ replaced by the independence constraint $\tilde{X} \independent U$. See also the recent works by \cite{del2025nonparametric,baptista2025conditional} that consider (nonparametric) estimation of the conditional vector quantile function. Inspired by linear quantile regression for the univariate ($d_y=1$) response case \cite{koenker1978regression,koenker2005quantile}, \cite{carlier2016vector} considered modeling the conditional vector quantile function as an affine function of $X$ and showed that the induced coupling is an optimal solution to the VQR problem (\ref{eq: vqr}), which relaxes the independence constraint into the mean-independence constraint. 

The Sinkhorn algorithm (for standard entropic OT) has attracted considerable attention in recent years because of increasing interest in OT-based tools in various application domains. 
For discrete marginals, it can be viewed as a matrix scaling algorithm, and a contraction argument was used to establish its linear convergence under the Hilbert projective metric by \cite{franklin1989scaling}.
A similar argument was extended to the continuous setting by 
 \cite{chen2016entropic}. A different approach was taken by \cite{carlier2022linear}, where the Sinkhorn algorithm is viewed as dual block coordinate ascent and its linear convergence in the multi-marginal setting is established by adapting the analysis from \cite{beck2013convergence}. Our linear convergence proof for the (vanilla) Sinkhorn algorithm is essentially an adaptation of the approach of \cite{carlier2022linear}. As noted before, however, the presence of an implicit equation for one of the potentials complicates the analysis.
For various other guarantees for the Sinkhorn algorithm under different settings (but still in the standard entropic OT case), we refer the reader to \cite{ruschendorf1995convergence,benamou2015iterative,altschuler2017near,dvurechensky2018computational,marino2020optimal,berman2020sinkhorn,chakrabarty2021better, ghosal2025convergence, eckstein2025hilbert,chizat2026sharper,conforti2023quantitative} and references therein. 

The mean-independence constraint in VQR is reminiscent of martingale OT considered in the mathematical finance literature (see, e.g., \cite{beiglbock2013model, galichon2014stochastic}). An adaptation of the Sinkhorn algorithm to martingale OT with entropic regularization was considered by \cite{chen2026convergence}, where linear convergence is established for the algorithm. However, the setting of VQR substantially differs from martingale OT and requires a different analysis. In addition, \cite{chen2026convergence} considered only a block coordinate ascent algorithm that requires exactly maximizing the dual objective with respect to each potential. 

Other related references include \cite{carlier2023sista, gallouet2025metric}. In \cite{carlier2023sista}, the authors considered learning the transport cost by parameterizing it with a finite-dimensional parameter and optimizing a convex objective over the dual potentials and cost parameter altogether.  In addition, they introduced $\ell^1$-penalization on the cost parameter and proposed an iterative algorithm called SISTA, which alternates between the Sinkhorn step for the potentials and the proximal gradient step for the cost parameter. Our modified Sinkhorn algorithm is related to SISTA, in the sense that both combine the Sinkhorn step for one type of parameters and a gradient-based method for another type of parameters. However, their convergence proof relies on some specific structures of the problem (e.g., the presence of $\ell^1$-penalty and the cost being parameterized as a linear combination of basis functions), which our setting does not share. In particular, in \cite{carlier2023sista}, the $\ell^1$-penalization plays an important role in establishing a suitable coercivity estimate for the cost parameter.  A problem similar to \cite{carlier2023sista} was considered in \cite{gallouet2025metric}. The analyses in both  \cite{carlier2023sista} and \cite{gallouet2025metric} are restricted to discrete marginals, as their analyses explicitly involve the number of atoms or the probability masses. Finally, VQR is a special case of weak OT with moment constraints considered in the recent
preprint \cite{carlier2025weak}, where an adaptation of SISTA is discussed in Section 6. However, no formal convergence guarantees are provided in \cite{carlier2025weak}.

\subsection{Organization}
The rest of the paper is organized as follows. Section \ref{sec: duality} reviews duality results for entropic VQR from \cite{kato2026entropic} and presents quantitative bounds for dual potentials and a local PL inequality for the dual objective. Sections \ref{sec: sinkhorn} and \ref{sec: modified sinkhorn} consider the Sinkhorn and modified Sinkhorn algorithms, respectively, and present their convergence guarantees. Section \ref{sec: numerical} reports numerical experiments. Sections \ref{sec: proof sec 2 and 3} and \ref{sec: proof sec 4} contain all proofs for Sections \ref{sec: duality}--\ref{sec: modified sinkhorn}. Section \ref{sec: discussion} leaves some discussion on future research directions. The Appendices contain an example demonstrating sharpness of the step size condition considered in Section \ref{sec: modified sinkhorn} and present additional experiments. 
%Appendix \ref{sec: appendix} contains an auxiliary result concerning the projection step used in the modified Sinkhorn algorithm. 

\subsection{Notation}
Throughout the paper, we will obey the following notation. 
\begin{itemize}
\item 
On a Euclidean space, let $\| \cdot \|$ and $\langle \cdot, \cdot \rangle$ denote the standard Euclidean norm and inner product, respectively. 
\item Let $\| \cdot \|_{\op}$ denote the operator norm for matrices. 
\item For a Polish metric space $M$, let $\calP(M)$ denote the space of all Borel probability measures on $M$. 
\item For any $\mu \in \calP(M), p \in [1,\infty]$, and $d \in \N$, let $L^p(\mu; \R^d)$ denote the $L^p(\mu)$-space of Borel measurable mappings $M \to \R^{d}$, endowed with the $L^p(\mu)$-norm  $\| f \|_{L^p(\mu)} := \left (\int \|f(x)\|^p \, d\mu(x) \right)^{1/p}$ (with the obvious modification when $p=\infty$). 
For $p=2$, $L^2(\mu;\R^{d})$ is a Hilbert space with inner product $\langle f,g \rangle_{L^2(\mu)} := \int \langle f,g \rangle \, d\mu$. 
\item We use $\calC(M;\R^{d})$ to denote the space of continuous mappings $M \to \R^d$. 
\item We write $L^p(\mu) = L^p(\mu;\R)$ and $\calC(M) = \calC(M;\R)$.
\item For $a,b \in \R$,  we use the notation $a \wedge b = \min \{ a,b \}$ and $a \vee b = \max\{ a,b \}$. In addition, we write $a^+ = a \vee 0$ and $a^- = (-a) \vee 0$. 
\item Finally, let $\N_0$ denote the set of nonnegative integers. 
\end{itemize}

\section{Duality for entropic VQR}
\label{sec: duality}

\subsection{Review of duality theory}
We start with fixing the notation. 
Let $(X,Y) \in \R^{d_x} \times \R^{d_y}$ be a pair of covariate and response vectors. We denote by $\calX$ and $\calY$ the supports of $X$ and $Y$, respectively. 
Let $\mu \in \calP(\R^{d_y})$ be a reference measure with support $\calU$. Throughout the rest of the paper, we maintain the following assumption. 

\begin{assumption}
The supports $\calU \subset \R^{d_y}, \calX \subset \R^{d_x}$, and $\calY \subset \R^{d_y}$ are compact. In addition,  $\E[X]=0$ and the matrix $\Sigma_X:=\E[XX^\top]$ is invertible.
\end{assumption}

The assumption encompasses both discrete and continuous marginals. The assumption that $\Sigma_X$ is invertible is needed to ensure dual attainment; see \cite{kato2026entropic}.

For any $\pi \in \Pi(\mu,\nu)$, we denote by $\pi_u$ the conditional distribution of $(\tilde{X},\tilde{Y})$ given $U$ when $(U,\tilde{X},\tilde{Y}) \sim \pi$, i.e., for any bounded measurable function $\varphi: \calU \times \calX \times \calY \to \R$, 
\begin{equation*}
\int \varphi \, d\pi = \int_{\calU} \left ( \int_{\calX \times \calY} \varphi (u,x,y) \, d\pi_u(x,y) \right)\, d\mu(u).
\end{equation*}
For a given regularization parameter $\varepsilon > 0$, the entropic VQR problem is formulated as
\begin{equation}
\mathsf{T}(\mu,\nu) := \inf_{\pi \in \calQ(\mu,\nu)} \int c \, d\pi + \varepsilon \KL{\pi}{\mu\otimes \nu},
\label{eq: primal}
\end{equation}
where $c(u,y) := \|u-y\|^2/2$ is the ground cost and $\calQ(\mu,\nu)$ is the feasible set 
\begin{equation*}
\calQ(\mu,\nu) := \left \{ \pi \in \Pi(\mu,\nu) : \int_{\calX \times \calY} x \, d\pi_u(x,y) = 0 \ \text{$\mu$-a.e. $u$} \right \}.
\end{equation*} 
The primal problem (\ref{eq: primal}) admits a unique optimal solution $\bar{\pi} \in \calQ(\mu,\nu)$ (cf. Proposition~2.1 in \cite{kato2026entropic}). Throughout the paper, we assume that $\varepsilon > 0$ is fixed and suppress the dependence of various parameters on $\varepsilon$.

Duality plays a crucial role in the development of the Sinkhorn algorithm for entropic OT. For entropic VQR, the dual objective is given by
\begin{equation*}
D(f,g,h) := \int f \,d\mu + \int h \, d\nu - \varepsilon \left ( \iota (f,g,h)-1 \right)
\end{equation*}
with
\begin{equation*}
\iota (f,g,h) :=  \int_{\calU \times \calX \times \calY} \exp \left ( \frac{f(u) +  \langle g(u), x\rangle + h(x,y) - c(u,y)}{\varepsilon}   \right) \,d(\mu\otimes \nu)(u,x,y).
\end{equation*}
The dual problem for (\ref{eq: primal}) reads 
\begin{equation}
\mathsf{D} (\mu,\nu) := \sup_{(f,g,h)} D(f,g,h), 
\label{eq: dual}
\end{equation}
where the supremum is taken over $(f,g,h) \in L^1(\mu) \times L^1(\mu;\R^{d_x}) \times L^1(\nu)$. We call any triplet of functions $(f,g,h)$ achieving the supremum above \textit{dual potentials}. 

The authors' recent work \cite{kato2026entropic} studies duality theory for entropic VQR (see also \cite{carlier2025weak}). We recall the duality results in \cite{kato2026entropic}. Under our assumption, strong duality holds, $\mathsf{T}(\mu,\nu) = \mathsf{D} (\mu,\nu)$, and there exist dual potentials $(\barf,\barg,\barh)$ achieving the supremum in the dual problem (\ref{eq: dual}). Given dual potentials, the optimal primal solution $\bar{\pi}$ has a density (with respect to $\mu \otimes \nu$) of the form 
\begin{equation*}
\frac{d\bar{\pi}}{d(\mu \otimes \nu)} (u,x,y) = \exp \left ( \frac{\barf(u) + \langle \barg(u),x\rangle + \barh(x,y) - c(u,y)}{\varepsilon} \right ).
\end{equation*}
The dual potentials are unique up to an affine shift, i.e., if $(f,g,h)$ is another triplet of dual potentials, then there exist $a \in \R$ and $v \in \R^{d_x}$ such that
\begin{equation*}
\begin{split}
&f(u) = \barf(u) + a, \ g (u)= \barg(u)+ v, \ \text{$\mu$-a.e. $u$}, \\
&h (x,y) = \barh(x,y) -a-\langle v,x \rangle, \ \text{$\nu$-a.e. $(x,y)$.}
\end{split}
\end{equation*}
Finally, for a given triplet of functions $(f,g,h) \in L^1(\mu) \times L^1(\mu;\R^{d_x}) \times L^1(\nu)$, they solve the dual problem (\ref{eq: dual}) if and only if they satisfy the following system of functional equations that are akin to the Schr\"{o}dinger system,
\begin{align}
&f(u) = -\varepsilon \log \int_{\calX \times \calY} \exp \left (\frac{\langle g(u), x\rangle + h(x,y) -c(u,y)}{\varepsilon}\right ) \,d\nu(x,y)\quad \text{$\mu$-a.e. $u$}, \label{eq: f}\\
&\int_{\calX \times \calY} x \exp \left ( \frac{\langle g(u),x\rangle + h(x,y) - c(u,y)}{\varepsilon} \right ) \, d\nu (x,y) = 0 \quad \text{$\mu$-a.e. $u$}, \label{eq: g} \\
&
h(x,y) = -\varepsilon \log \int_{\calU} \exp \left (\frac{f(u) + \langle g(u), x\rangle  -c(u,y)}{\varepsilon}\right ) \,d\mu(u) \quad \text{$\nu$-a.e. $(x,y)$.} \label{eq: h}
\end{align}

In the rest of this paper, we make the following conventions on dual potentials.

\begin{remark}[Conventions on dual potentials]
\label{rem: potential}
Under our assumption, one can choose versions of dual potentials $(f,g,h)$ so that (\ref{eq: f})--(\ref{eq: h}) hold for \textit{all} $u \in \R^{d_y}$ and $(x,y) \in \R^{d_x+d_y}$, respectively, and these versions are smooth functions on $\R^{d_y}, \R^{d_y}$, and $\R^{d_x + d_y}$, respectively. In what follows, we always choose such versions and restrict them to $\calU, \calU$, and $\calX \times \calY$, respectively. In particular, $(f,g,h)$ are continuous on their respective domains, i.e., $(f,g,h) \in \calC(\calU) \times \calC(\calU;\R^{d_x}) \times \calC(\calX \times \calY)$. In addition, we often normalize dual potentials $(f,g,h)$ in such a way that
$\int f \, d\mu = 0$ and $\int g \, d\mu =0$. 
Correspondingly, we define the function spaces
\begin{equation*}
\calC_{\diamond}(\calU) : = \left \{ f \in \calC(\calU) : \int f \, d\mu = 0 \right \} \quad \text{and} \quad \calC_{\diamond}(\calU; \R^{d_x}) : = \left \{ g \in \calC(\calU; \R^{d_x}) : \int g \, d\mu = 0 \right \}.
\end{equation*}
Among $\calC_\diamond(\calU) \times \calC_\diamond(\calU;\R^{d_x}) \times \calC(\calX \times \calY)$, there is a unique triplet of dual potentials, $(\barf,\barg,\barh)$, which satisfies (\ref{eq: f})--(\ref{eq: h}) for all $u \in \calU$ and $(x,y) \in \calX \times \calY$, respectively. 
\end{remark}

\subsection{Quantitative bounds on dual potentials}

We present quantitative bounds on dual potentials; such quantitative estimates will be needed for the subsequent convergence analysis. For entropic OT, such bounds follow directly from the Schr\"{o}dinger system and Jensen's inequality; see, e.g., Lemma~2.1 in \cite{nutz2022entropic}. 
For entropic VQR, such quantitative estimates turn out to be much harder to obtain, due to the implicit functional equation characterizing one dual potential $g$.
In what follows, for functions $f: \calU \to \R, g: \calU \to \R^{d_x}$, and $h: \calX \times \calY \to \R$, we use the notation $\|f\|_\infty := \sup_{u \in \calU}|f(u)|, \| g \|_\infty := \sup_{u \in \calU} \|g(u)\|$, and $\|h\|_\infty := \sup_{(x,y) \in \calX \times \calY}|h(x,y)|$.
In addition, we set
\begin{equation}
M_x : = \sup_{x \in \calX}\|x\| \quad \text{and} \quad \|c\|_\infty := \sup_{(u,y)\in\calU \times \calY} c(u,y). 
\label{eq: initial parameter}
\end{equation}

\begin{proposition}[Quantitative bounds on dual potentials]
\label{prop: potential bound}
Let $(\barf,\barg,\barh) \in \calC_\diamond (\calU) \times \calC_\diamond (\calU;\R^{d_x}) \times \calC(\calX \times \calY)$ be dual potentials.  Then we have
\begin{equation*}
\|\barf\|_\infty \le \| c \|_\infty, \ \  \|\barg\|_\infty \le 2 \|\Sigma_X^{-1}\|_{\op}M_x \left ( \frac{5}{2}\|c\|_\infty + \varepsilon \log \left (\frac{3}{2} \right) \right), \ \  \| \barh \|_\infty \le \| c \|_\infty + \| \barg \|_\infty M_x.
\end{equation*}
\end{proposition}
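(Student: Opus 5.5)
The plan is to bound $\barf$ and $\barh$ first using only the $f$- and $h$-equations, Jensen's inequality and the normalizations. Then $\barg$ is controlled by a variational (Donsker--Varadhan) lower bound on the log-partition function in \eqref{eq: f}, evaluated at $t=\barg(u)$. Finally the lower bound on $\barh$ is closed using $\|\barg\|_\infty$.

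\emph{Step 1 (upper bound on $\barh$).} By Jensen's inequality in \eqref{eq: h},
\[
\barh(x,y)\le -\int\bigl(\barf(u)+\langle \barg(u),x\rangle-c(u,y)\bigr)\,d\mu(u)=\int c(u,y)\,d\mu(u)\le\|c\|_\infty,
\]
where the equality uses $\int\barf\,d\mu=0$ and $\int\barg\,d\mu=0$.

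\emph{Step 2 (the value of $\int\barh\,d\nu$).} At the optimum, \eqref{eq: f} gives $\iota=1$. Strong duality then yields $\int\barh\,d\nu=\int\barf\,d\mu+\int\barh\,d\nu=\mathsf T(\mu,\nu)\ge0$, since the primal objective is nonnegative.

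\emph{Step 3 (bounds on $\barf$).} Jensen in \eqref{eq: f}, together with $\E[X]=0$, gives
\[
\barf(u)\le\int c(u,y)\,d\nu-\int\barh\,d\nu\le\|c\|_\infty .
\]
For the lower bound, I use convexity of $t\mapsto\log\int e^{(t\langle\barg(u),x\rangle+\barh-c)/\varepsilon}d\nu$. By \eqref{eq: g} its derivative vanishes at $t=1$, so the value at $t=1$ is at most the value at $t=0$. Combined with $\barh\le\|c\|_\infty$ and $c\ge0$, this gives $\barf(u)\ge-\|c\|_\infty$. Hence $\|\barf\|_\infty\le\|c\|_\infty$.

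\emph{Step 4 (bound on $\barg$; the main obstacle).} The difficulty is that the natural lower bound on $\barh$ already involves $\|\barg\|_\infty$, so the argument must avoid circularity. I use only $\barh\le\|c\|_\infty$ and $\int\barh\,d\nu\ge0$. Fix $u$, put $t=\barg(u)\neq0$, and set $w:=t/(2M_x\|t\|)$. Define the probability measure $d\rho=(1+\langle w,x\rangle)\,d\nu$. It is a probability measure because $\E[X]=0$, and its density lies in $[1/2,3/2]$, so $\KL{\rho}{\nu}\le\log(3/2)$. The Donsker--Varadhan inequality
\[
\varepsilon\log\int e^{F/\varepsilon}d\nu\ \ge\ \int F\,d\rho-\varepsilon\KL{\rho}{\nu}
\]
is applied with $F=\langle t,x\rangle+\barh-c(u,y)$; by \eqref{eq: f} the left-hand side equals $-\barf(u)\le\|c\|_\infty$. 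On the right-hand side:
\begin{itemize}
\item the linear term is $\int\langle t,x\rangle\,d\rho=\langle t,\Sigma_Xw\rangle\ge\|t\|/(2M_x\|\Sigma_X^{-1}\|_{\op})$;
\item the cost term satisfies $-\int c\,d\rho\ge-\|c\|_\infty$;
\item splitting $\barh=\barh^+-\barh^-$ and using the density bounds gives $\int\barh\,d\rho\ge\tfrac12\int\barh^+d\nu-\tfrac32\int\barh^-d\nu=\tfrac12\int\barh\,d\nu-\int\barh^-d\nu$. Since $\int\barh^-d\nu=\int\barh^+d\nu-\int\barh\,d\nu\le\|c\|_\infty$ (from Steps 1 and 2), this term is at least $-\|c\|_\infty$.
\end{itemize}
Rearranging yields
\[
\|\barg(u)\|\le 2\|\Sigma_X^{-1}\|_{\op}M_x\bigl(C\|c\|_\infty+\varepsilon\log(3/2)\bigr)
\]
with a numerical constant $C$. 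The crude bookkeeping above gives $C=3$; to reach the stated $C=5/2$ I will need to sharpen the estimate of the $\barh$- and $c$-terms, for instance by keeping $\tfrac12\int\barh\,d\nu$ and exploiting $c\ge0$ more carefully. That bookkeeping is where I expect the real work, since the structure of the argument is fixed by this step.

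\emph{Step 5 (lower bound on $\barh$).} In \eqref{eq: h}, the exponent satisfies $\barf+\langle\barg,x\rangle-c\le\|c\|_\infty+\|\barg\|_\infty M_x$. Hence $\barh\ge-\|c\|_\infty-\|\barg\|_\infty M_x$. With Step 1 this gives $\|\barh\|_\infty\le\|c\|_\infty+\|\barg\|_\infty M_x$.
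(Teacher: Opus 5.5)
Your route is the paper's. The Donsker--Varadhan inequality with the tilt $d\rho=(1+\langle w,x\rangle)\,d\nu$ is the same as the paper's use of $e^b-e^a\ge e^a(b-a)$ with $q^{(v)}$, and taking $w\propto\barg(u)$ instead of $\delta\Sigma_X^{-1}v$ changes nothing. Steps 1, 2 and 5 coincide with the paper. Your convexity argument for $\barf\ge-\|c\|_\infty$ is a valid alternative to the paper's use of $\KL{\bar\pi_u}{\nu}\ge0$.

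The gap is in Step 4. As written it proves the bound with $3\|c\|_\infty$, not the stated $\frac52\|c\|_\infty$, and the remedies you suggest would not close it.
\begin{itemize}
\item The cost term cannot be improved beyond $\int c\,d\rho\le\|c\|_\infty$.
\item Keeping $\frac12\int\barh\,d\nu$ in your $\barh^{\pm}$ split only yields $\frac32\int\barh\,d\nu-\int\barh^+\,d\nu\ge-\|c\|_\infty$.
\end{itemize}
The split uses density $\frac12$ on $\{\barh>0\}$ and $\frac32$ on $\{\barh<0\}$ at the same time. That discards the fact that the perturbation $\langle w,x\rangle$ has $\nu$-mean zero, which is exactly what is needed.

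The missing idea is to center $\barh$ at its upper bound $\|c\|_\infty$ rather than at $0$. Since $\int\langle w,x\rangle\,d\nu=0$, you get
\[
\int\barh\,d\rho=\int\barh\,d\nu-\int\bigl(\|c\|_\infty-\barh\bigr)\langle w,x\rangle\,d\nu\ \ge\ \int\barh\,d\nu-\frac12\int\bigl(\|c\|_\infty-\barh\bigr)\,d\nu=\frac32\int\barh\,d\nu-\frac12\|c\|_\infty\ \ge\ -\frac12\|c\|_\infty .
\]
This uses three facts: $\|c\|_\infty-\barh\ge0$ (your Step 1), $\langle w,x\rangle\le\frac12$, and $\int\barh\,d\nu\ge0$ (your Step 2). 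Set $\delta=1/(2\|\Sigma_X^{-1}\|_{\op}M_x)$. Your chain then reads $\|c\|_\infty\ge\delta\|\barg(u)\|-\|c\|_\infty-\frac12\|c\|_\infty-\varepsilon\log(3/2)$. This is precisely the stated bound $\|\barg\|_\infty\le 2\|\Sigma_X^{-1}\|_{\op}M_x\bigl(\frac52\|c\|_\infty+\varepsilon\log(3/2)\bigr)$, and it is the step the paper carries out when it bounds $-\int\barh\,dq^{(v)}$ by $\frac12\|c\|_\infty$.
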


One key idea in the proof is to find, for any direction $v \in \mathbb{S}^{d_x-1} := \{ x \in \R^{d_x} : \|x\|=1 \}$, a probability measure $q^{(v)}$ on $\calX \times \calY$ whose marginal mean of the first coordinate is proportional to $v$.
Expanding $\KL{q^{(v)}}{\bar{\pi}_u}$ and using nonnegativity of the KL divergence, we obtain an upper bound on $\langle \barg(u),v \rangle$. Choosing $v = \barg(u)/\|\barg(u)\|$ yields an upper bound on $\|\barg(u)\|$. 

\begin{remark}[Scaling of $X$]
\label{rem: scaing}
The dual objective is equivariant under a scale transformation $X \to aX$ for any $a \in \R \setminus \{ 0 \}$. That is, let $\nu^{(a)}$ denote the distribution of $(aX,Y)$, and let $D^{(a)}$ denote the dual objective for $(\mu,\nu^{(a)})$. Then $D^{(a)} (f,g/a,h(\cdot/a,\cdot)) = D(f,g,h)$ for $(f,g,h) \in \calC(\calU) \times \calC(\calU; \R^{d_x}) \times \calC(\calX \times \calY)$. As such, there is no loss of generality in assuming that $M_x=1$ in the analysis below, although we do not impose that normalization.
\end{remark}

\subsection{Local PL inequality for dual objective}
As discussed in the introduction, part of our convergence proofs rests on establishing PL inequalities of the dual objective along the iterates. 
Consider minimizing a smooth function $F: \R^d \to \R$ that is bounded below. A PL inequality refers to an inequality of the form
\[
\| \nabla F(z) \|^2 \ge \alpha \big (  F(z) - \inf F \big), \ \forall z \in \R^d,
\]
for some constant $\alpha > 0$. Under PL inequalities, various optimization algorithms are known to converge linearly \cite{attouch2013convergence,karimi2016linear,bolte2017error,lewis2025complexity}.

For the dual objective functional $D(f,g,h)$, one can define its (negative) gradients in the $f, g$, and $h$ directions as 
\begin{equation}
\begin{split}
\ell_f(f,g,h)(u) &:= e^{f(u)/\varepsilon} \int e^{(\langle g^(u),x\rangle+h(x,y)-c(u,y))/\varepsilon} \, d\nu(x,y) -1, \\
\ell_g(f,g,h) (u) &:= e^{f(u)/\varepsilon} \int x e^{(\langle g(u),x\rangle+h(x,y)-c(u,y))/\varepsilon}\, d\nu(x,y), \\
\ell_h(f,g,h)(x,y) &:= e^{h(x,y)/\varepsilon} \int  e^{(f(u)+\langle g(u),x\rangle-c(u,y))/\varepsilon} \, d\mu(u) -1.
\end{split}
\label{eq: gradient}
\end{equation}
Indeed, observe that
\begin{equation*}
\frac{d}{ds} D(f + s \varphi,g ,h ) \Big|_{s=0}= -\langle \varphi,\ell_f(f,g,h) \rangle_{L^2(\mu)}, \ \varphi \in L^\infty(\mu).
\end{equation*}
The interpretation of other gradients follows similarly. Define a norm on $\calH:=L^2(\mu) \times L^2(\mu;\R^{d_x}) \times L^2(\nu)$ as
\begin{equation}
\| (f,g,h) \|_{\calH} := \sqrt{\|f\|_{L^2(\mu)}^2 +\|g\|_{L^2(\mu)}^2 + \|h\|_{L^2(\nu)}^2 }.
\label{eq: norm}
\end{equation}
The following proposition establishes a PL inequality for the dual objective in an $L^\infty$-neighborhood of the dual potentials, which will suffice for our purpose. 
\begin{proposition}[Local PL inequality for dual objective]
\label{prop: PL}
Let $(f, g, h) \in \calC_\diamond(\calU) \times \calC_\diamond(\calU;\R^{d_x}) \times \calC(\calX \times \calY)$.
Suppose that, for some $K > 0$,
\[
f(u) + \langle g(u), x\rangle + h(x,y) - c(u,y) \ge -K
\]
and
\[
\barf(u) + \langle \barg(u), x\rangle + \barh(x,y) - c(u,y) \ge -K
\]
for all $(u,x,y) \in \calU \times \calX \times \calY$. Then
\begin{equation*}
\big\|(\ell_f(f,g,h),\ell_g(f,g,h),\ell_h(f,g,h))\big\|_{\calH}^2 \ge \frac{2(1 \wedge \underline{\lambda})e^{-K/\varepsilon}}{\varepsilon} \big(D(\barf,\barg,\barh) - D (f,g,h) \big),
\end{equation*}
where $\underline{\lambda}$ is defined by
\begin{equation}
\underline{\lambda} := \text{smallest eigenvalue of $\Sigma_X$}.
\label{eq: eigenvalue}
\end{equation}
\end{proposition}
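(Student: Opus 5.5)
The plan is a strong-concavity argument along the segment joining $(f,g,h)$ to $(\barf,\barg,\barh)$. The exponential is strongly convex on the region where its argument is bounded below, and the hypotheses give that lower bound. Write
\[
\varphi := f(u)+\langle g(u),x\rangle+h(x,y)-c(u,y), \qquad \bar\varphi := \barf(u)+\langle \barg(u),x\rangle+\barh(x,y)-c(u,y),
\]
and let $\delta f := \barf-f$, $\delta g := \barg-g$, $\delta h := \barh-h$ and $\Delta := \bar\varphi-\varphi = \delta f(u)+\langle \delta g(u),x\rangle+\delta h(x,y)$. From the definition of $D$,
\[
D(\barf,\barg,\barh)-D(f,g,h) = \int \delta f\,d\mu+\int \delta h\,d\nu-\varepsilon\int \big(e^{\bar\varphi/\varepsilon}-e^{\varphi/\varepsilon}\big)\,d(\mu\otimes\nu).
\]

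First I apply the elementary inequality $e^{b}-e^{a}\ge e^{a}(b-a)+\tfrac12 e^{a\wedge b}(b-a)^2$, which is Taylor's formula with the second derivative bounded below by its minimum on $[a\wedge b, a\vee b]$. Take $a=\varphi/\varepsilon$ and $b=\bar\varphi/\varepsilon$. Both hypotheses give $a\wedge b\ge -K/\varepsilon$ pointwise on $\calU\times\calX\times\calY$. Hence
\[
D(\barf,\barg,\barh)-D(f,g,h)\le \int\delta f\,d\mu+\int \delta h\,d\nu-\int e^{\varphi/\varepsilon}\Delta\,d(\mu\otimes\nu)-\frac{e^{-K/\varepsilon}}{2\varepsilon}\int \Delta^2\,d(\mu\otimes\nu).
\]
Next I integrate the term $\int e^{\varphi/\varepsilon}\Delta$ piece by piece using Fubini and the definitions (\ref{eq: gradient}). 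The $\delta f$ part gives $\int \delta f(\ell_f+1)\,d\mu$. The $\delta g$ part gives $\int\langle \delta g,\ell_g\rangle\,d\mu$. The $\delta h$ part gives $\int \delta h(\ell_h+1)\,d\nu$. The constant terms cancel against $\int\delta f\,d\mu+\int\delta h\,d\nu$, so the linear part equals $-\langle (\delta f,\delta g,\delta h),(\ell_f,\ell_g,\ell_h)\rangle_{\calH}$.

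The key step is a lower bound on the quadratic term:
\[
\int \Delta^2\,d(\mu\otimes\nu)\ge (1\wedge\underline\lambda)\,\|(\delta f,\delta g,\delta h)\|_{\calH}^2 .
\]
This is where the normalization $f,\barf\in\calC_\diamond(\calU)$ and $g,\barg\in\calC_\diamond(\calU;\R^{d_x})$ and the assumption $\E[X]=0$ enter. Expand the square under the product measure. The cross term $\delta f(u)\langle\delta g(u),x\rangle$ integrates to zero because $\int x\,d\nu=0$. The cross term $\delta f(u)\delta h(x,y)$ integrates to $\int\delta f\,d\mu\cdot\int\delta h\,d\nu=0$. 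The cross term $\langle\delta g(u),x\rangle\delta h(x,y)$ vanishes because $\int\delta g\,d\mu=0$. What remains is
\[
\|\delta f\|_{L^2(\mu)}^2+\int \delta g^\top\Sigma_X\,\delta g\,d\mu+\|\delta h\|_{L^2(\nu)}^2 ,
\]
which is at least $(1\wedge\underline\lambda)\|(\delta f,\delta g,\delta h)\|_{\calH}^2$. I expect this orthogonality bookkeeping to be the only real point of care; it is exactly why the statement restricts to normalized potentials.

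Finally, set $\alpha := (1\wedge\underline\lambda)e^{-K/\varepsilon}/(2\varepsilon)$, $t:=\|(\delta f,\delta g,\delta h)\|_{\calH}$ and $L:=\|(\ell_f,\ell_g,\ell_h)\|_{\calH}$. By Cauchy--Schwarz the gap is at most $Lt-\alpha t^2$. Maximizing over $t\ge0$ bounds this by $L^2/(4\alpha)$. Rearranging gives
\[
L^2\ge 4\alpha\,\big(D(\barf,\barg,\barh)-D(f,g,h)\big) = \frac{2(1\wedge\underline\lambda)e^{-K/\varepsilon}}{\varepsilon}\big(D(\barf,\barg,\barh)-D(f,g,h)\big),
\]
which is the asserted constant. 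All integrals are finite because every function involved is continuous on a compact set.
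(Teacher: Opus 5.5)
Your proposal is correct and follows essentially the same route as the paper's proof. It uses the second-order lower bound on the exponential expanded at $(f,g,h)$ with curvature $e^{-K/\varepsilon}$, and the orthogonal expansion of $\int(\bar\varphi-\varphi)^2\,d(\mu\otimes\nu)$ via the normalizations and $\E[X]=0$. The only cosmetic difference is the final absorption step: you maximize $Lt-\alpha t^2$ over $t$, whereas the paper applies $2ab\ge-\kappa a^2-\kappa^{-1}b^2$ with $\kappa=(1\wedge\underline{\lambda})e^{-K/\varepsilon}/\varepsilon$, and both give the same constant.
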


In addition to the PL inequality, we will establish slope-ascent conditions for both algorithms, which are algorithm-specific.\footnote{We borrow the term ``slope-ascent conditions'' from \cite{lewis2025complexity}.} Verifying the PL inequality and slope-ascent conditions for the algorithms requires establishing uniform-in-time bounds on the iterates, which shares the bulk of the convergence proofs

\section{Sinkhorn algorithm}
\label{sec: sinkhorn}

 For standard entropic OT, the Sinkhorn algorithm iteratively solves the Schr\"{o}dinger system, which corresponds to the Euler-Lagrange equations for the dual objective. Viewing the Schr\"{o}dinger-like system (\ref{eq: f})--(\ref{eq: h}) as the Euler-Lagrange equations for the (concave) dual objective $D(f,g,h)$, one can directly adapt the Sinkhorn algorithm to entropic VQR.

\begin{definition}[Sinkhorn algorithm for entropic VQR]
Start from $(f^0,g^0,h^0) \in \calC_{\diamond}(\calU) \times \calC_{\diamond}(\calU;\R^{d_x}) \times \calC(\calX \times \calY)$. At the $t$-th iterate, we update $(f^{t},g^{t},h^{t})$ as follows.
\begin{enumerate}[leftmargin=0.25in]
\item[1.] For each $u \in \calU$, find a vector $\tildeg^{t+1}(u) \in \R^{d_x}$ that satisfies
\begin{equation}
\int x \exp \left ( \frac{\langle \tildeg^{t+1}(u),x\rangle + h^{t}(x,y) - c(u,y)}{\varepsilon} \right ) \, d\nu (x,y) = 0.
\label{eq: update g}
\end{equation}
\item[2.] Set
\begin{equation*}
\begin{split}
\tildef^{t+1}(u) &= -\varepsilon \log \int \exp \left ( \frac{\langle \tildeg^{t+1}(u),x\rangle + h^{t}(x,y) - c(u,y)}{\varepsilon} \right ) \, d\nu (x,y), \ u \in \calU, \\
\tildeh^{t+1}(x,y) &= - \varepsilon \log \int \exp \left ( \frac{\tildef^{t+1}(u) +  \langle \tildeg^{t+1}(u),x\rangle- c(u,y)}{\varepsilon} \right ) \, d\mu (u), \ (x,y) \in \calX \times \calY.
\end{split}
\end{equation*}
\item[3.] Finally, normalize $(\tildef^{t+1},\tildeg^{t+1},\tildeh^{t+1})$ as 
\begin{equation*}
\begin{split}
&f^{t+1}(u) = \tildef^{t+1}(u)-\int \tildef^{t+1} \, d\mu, \quad g^{t+1}(u) = \tildeg^{t+1} (u)-  \int \tildeg^{t+1} \, d\mu, \\
&h^{t+1}(x,y) = \tildeh^{t+1}(x,y) + \int \tildef^{t+1} \, d\mu + \left \langle \int \tildeg^{t+1} \, d\mu, x \right \rangle. 
\end{split}
\end{equation*}
\end{enumerate}
\end{definition}

\begin{remark} 
\label{rem: sinkhorn}
Several remarks on the Sinkhorn algorithm are in order.
\begin{enumerate}[leftmargin=0.25in]
    \item[(i)] From the proofs of Proposition~2.7 and Theorem 2.2 in \cite{kato2026entropic}, one can show that there exists a unique vector $\tildeg^{t+1} (u) \in \R^{d_x}$ satisfying (\ref{eq: update g}) for each $u \in \calU$, and $\tildeg^{t+1}$ is continuous on $\calU$. In addition, by construction, $\tildef^{t+1}$ and $\tildeh^{t+1}$ are continuous. In particular, this implies that the integrals $\int \tildef^{t+1} \, d\mu$ and $\int \tildeg^{t+1} \, d\mu$ are well-defined and finite. 
    \item[(ii)]
    Alternatively, the iterates can be characterized as 
\begin{equation}
\begin{split}
&\tildeg^{t+1} \in \argmax_{g \in \calC(\calU;\R^{d_x})} D(f^{t},g,h^{t}), \\
&\tildef^{t+1} \in \argmax_{f \in \calC(\calU)} D(f,\tildeg^{t+1},h^{t}), \\
&\tildeh^{t+1} \in \argmax_{h \in \calC(\calX \times \calY)} D(\tildef^{t+1},\tildeg^{t+1},h).
\end{split}
\label{eq: interpretation}
\end{equation}
The update order for $\tildeg^{t+1}$ and $(\tildef^{t+1},\tildeh^{t+1})$ can be interchanged. In addition, within the updates for $(\tildef^{t+1},\tildeh^{t+1})$, the order can be interchanged. Namely, one may first update $h^t$ and then $f^t$. These modifications retain linear convergence. 
\item[(iii)] The normalization step serves two purposes. First, the dual objective $D(f,g,h)$ is invariant under an affine shift, i.e., the value of $D(f,g,h)$ does not change even if we replace $(f,g,h)$ with $(f+a,g+v,h-a-\langle v,x\rangle)$ for any $a \in \R$ and $v \in \R^{d_x}$, so a suitable normalization is needed to guarantee local strong concavity of $D(f,g,h)$ around $(\barf,\barg,\barh)$  (cf. the proof of Proposition~\ref{prop: PL}). Related to the first point, without a proper normalization, the Sinkhorn iterates would be unbounded as $t$ grows. 
\item[(iv)] An obvious drawback of the (vanilla) Sinkhorn algorithm described above is the need to solve the implicit functional equation (\ref{eq: update g}) at each iteration. 
This difficulty will be addressed in the next section, where we combine the Sinkhorn iteration with one-step gradient ascent applied to solving (\ref{eq: update g}) at each step. 
\end{enumerate}
\end{remark}

We shall study convergence of the Sinkhorn algorithm. 
As discussed at the end of the previous section, key to our convergence analysis is to establish uniform-in-time bounds on the Sinkhorn iterates. 
%Since the dual objective $D(f,g,h)$ is essentially governed by the exponential function, whose second derivative is bounded and bounded away from zero on bounded sets, one may expect that linear convergence would follow once we can verify that the Sinkhorn iterates are uniformly bounded over $t$. 
For standard entropic OT, such estimates follow rather directly; see Lemma~3.1 in \cite{carlier2022linear}. 
Similar to Proposition~\ref{prop: potential bound}, however, obtaining quantitative bounds on the Sinkhorn iterates is much harder for entropic VQR. 
To present the result, we set
\begin{equation}
L_c := \sup_{u \in \calU}\|u\| + \sup_{y \in \calY}\|y\| \quad \text{and} \quad D_0 := D (f^0,g^0,h^0). 
\label{eq: parameter}
\end{equation}
\begin{proposition}[Quantitative bounds on Sinkhorn iterates]
\label{prop: sinkhorn potential}
For  $t \in \N_0$,
\begin{equation*}
\begin{split}
&\big \| \tildef^{t+1} \big \|_\infty \vee \big \| f^{t+1} \big \|_\infty \le L_c \diam (\calU) + \|c\|_\infty + D_0^- + \| h^0 \|_\infty=: K_f, \\
&\big \|\tildeg^{t+1}\big \|_\infty \vee \big \| g^{t+1} \big \|_\infty\le 4 \|\Sigma_X^{-1} \|_{\op} M_x  \left ( 4 \| c \|_\infty -\frac{3}{2} D_0 + K_f + \| h^0 \|_\infty + \varepsilon \log \left ( \frac{3}{2} \right )\right ) =: K_g, \\
&\big \| \tildeh^{t+1} \big \|_\infty \vee \big \| h^{t+1} \big \|_\infty \le \| c \|_\infty + K_f + K_g M_x =: K_h.
\end{split}
\end{equation*}
\end{proposition}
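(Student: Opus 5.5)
Before the argument, note how the bounds fit together. The constant $K_f$ contains $L_c\diam(\calU)$, which suggests an oscillation bound from the Lipschitz property of $c$ in $u$. The constant $K_g$ mirrors Proposition~\ref{prop: potential bound}, with the entropic ratio $3/2$ and the factor involving $\|\Sigma_X^{-1}\|_{\op}M_x$. So the plan is to redo the argument of Proposition~\ref{prop: potential bound}, replacing the optimality relations with monotonicity of the dual objective along the iterates. The basic fact is that by (\ref{eq: interpretation}) each partial update is an exact block maximization. Together with invariance of $D$ under affine shifts, this gives $D(f^{t},g^{t},h^{t})\ge D_0$ for all $t$. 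After an $f$- or $h$-update the exponential integral $\iota$ equals $1$, so $\int \tildef^{t+1}d\mu+\int \tildeh^{t+1}d\nu = D(\tildef^{t+1},\tildeg^{t+1},\tildeh^{t+1}) \ge D_0$.

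\textbf{Step 1 (bound on $f$).} The update of $\tildef^{t+1}$ is defined through $h^t$. For $t\ge1$, $h^t$ is itself a $c$-type transform of the previous iterates. Write $\tildef^{t+1}(u)-\tildef^{t+1}(u')$ as a log-ratio of integrals. Using (\ref{eq: update g}) one cannot directly compare, because $\tildeg^{t+1}(u)$ changes with $u$. Instead I would use the variational form: $\tildef^{t+1}(u) = -\varepsilon\log\min_{w}\int e^{(\langle w,x\rangle+h^t-c(u,y))/\varepsilon}d\nu$. This holds because (\ref{eq: update g}) is the first-order condition of the strictly convex function of $w$ defined by that integral. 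Hence $\tildef^{t+1}$ is a supremum over $w$ of functions whose $u$-Lipschitz constant is at most $\sup_{y}\|\nabla_u c(u,y)\|\le L_c$. So $\mathrm{osc}(\tildef^{t+1})\le L_c\diam(\calU)$, and the same oscillation bound holds for $f^{t+1}$.

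To bound the level, I need $\int\tildef^{t+1}d\mu$. Taking $w=0$ gives $\tildef^{t+1}\ge -\|c\|_\infty\cdot 0-\|h^t\|_\infty$-type bounds, which are too weak. So I will instead use $\int \tildef^{t+1}d\mu \ge D_0-\int\tildeh^{t+1}d\nu$ together with an upper bound on $\tildeh^{t+1}$. Jensen's inequality in the $h$-update, using $\int \tildeg^{t+1}d\mu$-centering, would give such a bound. Chasing constants, the appearance of $D_0^-$ and $\|h^0\|_\infty$ in $K_f$ suggests an induction in which the bound on $h^t$ enters only through $h^0$ and a telescoped monotonicity. I expect to need the argument at $w=0$ paired with normalization to absorb $h^t$ into $D_0$.

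\textbf{Step 2 (bound on $g$), the main obstacle.} Fix $u$ and $v\in\mathbb S^{d_x-1}$. Following the idea after Proposition~\ref{prop: potential bound}, construct $q^{(v)}\ll\nu$ with density bounded by $3/2$ whose $x$-mean is $s\,\Sigma_X^{-1}v$-proportional. The candidate is $1+\langle \Sigma_X^{-1}v,x\rangle/(2\|\Sigma_X^{-1}\|_{\op}M_x)$, which has mean $v/(2\|\Sigma_X^{-1}\|_{\op}M_x)$ in the appropriate scaling and density in $[1/2,3/2]$. Let $\pi^{t+1}_u$ denote the probability measure $\propto e^{(\langle\tildeg^{t+1}(u),x\rangle+h^t-c)/\varepsilon}\nu$. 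Then $0\le \KL{q^{(v)}}{\pi^{t+1}_u}$. Expanding this, using $\log(3/2)$ for the entropy of $q^{(v)}$, and using $\tildef^{t+1}$ as the log-normalizer gives
\[
\frac{\langle\tildeg^{t+1}(u),v\rangle}{2\|\Sigma_X^{-1}\|_{\op}M_x}\le \varepsilon\log\tfrac32+\|c\|_\infty+\int h^t\,d\nu- \int h^t\,dq^{(v)}+ \tildef^{t+1}(u).
\]
The difficulty is that $h^t$ enters with the signed measure $\nu-q^{(v)}$. Hence only $\|h^t\|_\infty$-type control, or a one-sided bound, is available. I would bound $-\int h^t dq^{(v)}\le -\tfrac12\int h^t d\nu+\tfrac32\|{(h^t)}^-\|$, i.e.\ split $q^{(v)}\ge\nu/2$. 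I would also use $\int h^t d\nu\le$ (monotone quantity) $-\int f^t$, that is, $\int h^td\nu = D(f^t,g^t,h^t)-\int f^t d\mu\ge D_0$ combined with the upper bound from Jensen. This is where the factor $-\tfrac32 D_0$ and the doubled constant $4$ should come from. Choosing $v=\tildeg^{t+1}(u)/\|\tildeg^{t+1}(u)\|$ bounds $\|\tildeg^{t+1}\|_\infty$, and $\|g^{t+1}\|_\infty\le 2\|\tildeg^{t+1}\|_\infty$ then accounts for the factor $4$.

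\textbf{Step 3 (bound on $h$).} This is immediate. From the $\tildeh$ formula, $|\tildeh^{t+1}|\le\|c\|_\infty+\|\tildef^{t+1}\|_\infty+\|\tildeg^{t+1}\|_\infty M_x$. The normalization only shifts by $\int\tildef^{t+1}$ and $\langle\int\tildeg^{t+1},x\rangle$, and the same display bound holds with $f^{t+1},g^{t+1}$ after regrouping.
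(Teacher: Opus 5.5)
Your overall architecture matches the paper: an oscillation bound plus a level bound for $\tildef^{t+1}$, the $q^{(v)}$/KL argument for $\tildeg^{t+1}$ with $\|g^{t+1}\|_\infty\le 2\|\tildeg^{t+1}\|_\infty$, and the transform bound for $h$. Your Lipschitz bound via $\tildef^{t+1}(u)=\sup_w\{-\varepsilon\log\int e^{(\langle w,x\rangle+h^t-c(u,\cdot))/\varepsilon}\,d\nu\}$ is a valid substitute for the paper's Jensen argument with the tilted measure $\rho_{u'}$.

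The genuine gap is the level $\int\tildef^{t+1}\,d\mu$ in Step 1. The route you commit to is circular. Jensen in the $h$-update gives $\int\tildeh^{t+1}\,d\nu\le\|c\|_\infty-\int\tildef^{t+1}\,d\mu$ (the $x$-term vanishes since $\int x\,d\nu=0$). So $\int\tildef^{t+1}\,d\mu\ge D_0-\int\tildeh^{t+1}\,d\nu$ collapses to the vacuous $D_0\le\|c\|_\infty$. This cannot be fixed inside $(\tildef^{t+1},\tildeg^{t+1},\tildeh^{t+1})$, because $D$ is invariant under $(\tildef^{t+1}+a,\tildeh^{t+1}-a)$. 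The level of $\tildef^{t+1}$ is pinned by $h^t$, which is frozen when $\tildef^{t+1}$ is computed.

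The missing step is as follows. Since $\iota(\tildef^{t+1},\tildeg^{t+1},h^t)=1$, monotonicity and weak duality give $\int\tildef^{t+1}\,d\mu+\int h^t\,d\nu=D(\tildef^{t+1},\tildeg^{t+1},h^t)\in[D_0,\|c\|_\infty]$. Weak duality, $D\le\int c\,d(\mu\otimes\nu)$, is something you never invoke. For $t\ge1$, $\int h^t\,d\nu=D(f^t,g^t,h^t)\in[D_0,\|c\|_\infty]$, because $\int f^t\,d\mu=0$ and $\iota(f^t,g^t,h^t)=1$. For $t=0$ only $|\int h^0\,d\nu|\le\|h^0\|_\infty$ is available. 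This case split is exactly where $D_0^-+\|h^0\|_\infty$ in $K_f$ comes from.

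Incidentally, the choice $w=0$ you dismissed is not too weak. It gives $\tildef^{t+1}\ge-\sup h^t$, and only the supremum enters, not $\|h^t\|_\infty$. For $t\ge1$, $\sup h^t\le\|c\|_\infty$ by Jensen, since $f^t,g^t$ are centered. Together with $\tildef^{t+1}\le\|c\|_\infty-\int h^t\,d\nu$ (Jensen in $\nu$), this also closes the step.

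Step 2 has secondary errors. With $\delta=1/(2\|\Sigma_X^{-1}\|_{\op}M_x)$, your KL expansion should read $\delta\langle\tildeg^{t+1}(u),v\rangle\le\varepsilon\log\frac32+\|c\|_\infty-\tildef^{t+1}(u)-\int h^t\,dq^{(v)}$. There is no $+\int h^t\,d\nu$ term, and $\tildef^{t+1}(u)$ enters with a minus sign. So the difficulty with the signed measure $\nu-q^{(v)}$ is an artifact, and all you need is a lower bound on $\int h^t\,dq^{(v)}$.

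For $t\ge1$, $h^t\le\|c\|_\infty$ and $\int h^t\,d\nu\ge D_0$ give $\|h^t\|_{L^1(\nu)}=2\int(h^t)^+\,d\nu-\int h^t\,d\nu\le 2\|c\|_\infty-D_0$. Then $dq^{(v)}/d\nu\le 3/2$ yields $\int h^t\,dq^{(v)}\ge-\frac32(2\|c\|_\infty-D_0)$, which is the source of $4\|c\|_\infty-\frac32 D_0$. For $t=0$, $h^0$ is arbitrary: neither $h^0\le\|c\|_\infty$ nor $\int h^0\,d\nu\ge D_0$ need hold, so you must use $\|h^0\|_\infty$. Your proposal never treats this case, although it produces the $\|h^0\|_\infty$ in $K_g$. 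Finally, this step uses $|\tildef^{t+1}(u)|\le K_f$, so it inherits the gap from Step 1.
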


The initial dual objective value $D_0$ appears on the bounds because one needs to find bounds on $\int \tildef ^{t+1}\, d\mu$ and $\| h^t \|_{L^1(\nu)}$. To this end, we use monotonicity of the dual objective values along the iterates (cf. Lemma~\ref{lem: monotonicity} below) and weak duality.

By adjusting the constants if necessary, we will assume, without loss of generality, 
\begin{equation*}
M_x \ge 1, \ K_f \ge \| \barf \|_\infty \vee \| f^0 \|_\infty, \ K_g \ge \| \barg \|_\infty \vee \| g^0 \|_\infty, \ K_h \ge \| \barh \|_\infty \vee \| h^0 \|_\infty.
\end{equation*}
Recall that quantitative bounds on $(\barf,\barg,\barh)$ are available from Proposition~\ref{prop: potential bound}. We set 
\begin{equation*}
\bar{K} := K_f  + K_gM_x + K_h + \| c \|_\infty.
\end{equation*}
Recall the notation $\underline{\lambda}$ in (\ref{eq: eigenvalue}). 
We are ready to present the linear convergence result for the (vanilla) Sinkhorn algorithm. 

\begin{theorem}[Linear convergence of Sinkhorn algorithm]
\label{thm: linear conv}
Let $(\barf,\barg,\barh) \in \calC_\diamond(\calU) \times \calC_\diamond(\calU;\R^{d_x}) \times \calC(\calX \times \calY)$ be dual potentials. 
For $t \in \N_0$, we have
\begin{equation*}
\bar{D} - D (f^{t},g^{t},h^{t}) \le \big(1+\tau\big)^{-t} \big ( \bar{D} - D_0 \big),
\end{equation*}
where $\bar{D} := D(\barf,\barg,\barh)$ and $\tau > 0$ is given by
\begin{equation}
\tau := \frac{(1 \wedge \underline{\lambda})e^{-5\bar{K}/\varepsilon} }{3M_x^2}.
\label{eq: tau}
\end{equation}
Furthermore, as $t \to \infty$, we have
\begin{equation*}
\big \| f^{t} - \barf \big \|_{L^2(\mu)}^2 + \big \| g^{t} - \barg \big \|_{L^2(\mu)}^2 + \big \| h^{t} - \barh \big \|_{L^2(\nu)}^2 = O\big( \big(1+\tau\big)^{-t} \big).
\end{equation*}
\end{theorem}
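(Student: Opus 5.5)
We will follow the block-coordinate-ascent approach of \cite{carlier2022linear}, in the form of a recursion $\delta_{t+1}\le \delta_t - c\,\|\text{gradient at }(f^{t+1},g^{t+1},h^{t+1})\|_{\calH}^2$ with $\delta_t:=\bar D - D(f^t,g^t,h^t)$. This recursion will be combined with the local PL inequality (Proposition~\ref{prop: PL}) and the uniform bounds of Proposition~\ref{prop: sinkhorn potential}.

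\textbf{Step 1: uniform bounds.} By Proposition~\ref{prop: sinkhorn potential} and the convention on $K_f,K_g,K_h$, every iterate and the dual potentials satisfy
\[
|f(u)+\langle g(u),x\rangle+h(x,y)-c(u,y)|\le \bar K .
\]
Hence Proposition~\ref{prop: PL} applies with $K=\bar K$ at each $(f^t,g^t,h^t)$. The normalization step leaves $D$ unchanged by affine-shift invariance, so it does not affect the argument. The gradients are also unchanged by the shift, since the exponent is preserved.

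\textbf{Step 2: gradient at the new iterate.} After the update, $\tildeh^{t+1}$ is exact, so $\ell_h(f^{t+1},g^{t+1},h^{t+1})=0$. The $f$- and $g$-gradients are nonzero only because $h$ moved from $h^t$ to $\tildeh^{t+1}$. For fixed $u$, write $\ell_f(u)=\int (e^{(\tildeh^{t+1}-h^t)/\varepsilon}-1)\,dp_u$ and $\ell_g(u)=\int x\,(e^{(\tildeh^{t+1}-h^t)/\varepsilon}-1)\,dp_u$. Here $p_u$ is the probability measure on $\calX\times\calY$ with density proportional to $e^{(\tilde f^{t+1}+\langle \tildeg^{t+1},x\rangle+h^t-c)/\varepsilon}$; the identities use $\ell_f=\ell_g=0$ at $(\tildef^{t+1},\tildeg^{t+1},h^t)$. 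Since the density of $p_u$ is bounded by $e^{\bar K/\varepsilon}$ relative to $\nu$, and $|e^a-1|\le e^{|a|}|a|$ with $|a|\le 2\bar K/\varepsilon$, Cauchy--Schwarz gives
\[
\|\ell_f\|_{L^2(\mu)}^2+\|\ell_g\|_{L^2(\mu)}^2\le (1+M_x^2)\,e^{c\bar K/\varepsilon}\,\varepsilon^{-2}\,\|\tildeh^{t+1}-h^t\|_{L^2(\nu)}^2 .
\]
This is where the factor $M_x^2$ and an exponent of order $\bar K$ enter.

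\textbf{Step 3: sufficient ascent.} Because each block update is an exact maximization of a concave function, the ascent is at least the gap in the $h$-block. The map $h\mapsto D(\tildef^{t+1},\tildeg^{t+1},h)$ is strongly concave on the relevant $L^\infty$ ball, with modulus $e^{-\bar K/\varepsilon}/\varepsilon$ with respect to $\nu$, since its second derivative is $-\varepsilon^{-1}\int e^{(\cdot)/\varepsilon}d\mu$. Hence
\[
D(f^{t+1},g^{t+1},h^{t+1})-D(\tildef^{t+1},\tildeg^{t+1},h^t)\ge \frac{e^{-\bar K/\varepsilon}}{2\varepsilon}\|\tildeh^{t+1}-h^t\|_{L^2(\nu)}^2 .
\]
The other block updates only increase $D$, which is Lemma~\ref{lem: monotonicity}.

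\textbf{Step 4: combine.} Chaining Steps 2 and 3 gives $\delta_t-\delta_{t+1}\ge c_1\|\nabla D(f^{t+1},g^{t+1},h^{t+1})\|_{\calH}^2$. Proposition~\ref{prop: PL} at $t+1$ then gives $\|\nabla D\|_{\calH}^2\ge c_2\,\delta_{t+1}$. Together these yield $\delta_{t+1}\le(1+c_1c_2)^{-1}\delta_t$, and bookkeeping of the exponents should produce $\tau$ as in \eqref{eq: tau}.

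\textbf{Iterate convergence.} On the $L^\infty$ ball, and restricted to normalized triples, $D$ is strongly concave around its maximizer $(\barf,\barg,\barh)$ with modulus of order $(1\wedge\underline\lambda)e^{-\bar K/\varepsilon}/\varepsilon$, as in the proof of Proposition~\ref{prop: PL}. Therefore
\[
\|(f^t,g^t,h^t)-(\barf,\barg,\barh)\|_{\calH}^2\lesssim \delta_t=O((1+\tau)^{-t}).
\]

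\textbf{Main obstacle.} The delicate step is Step 2, bounding the $g$-gradient after the $h$-update. The $g$-block has no closed form, so we must rely on the implicit identity \eqref{eq: update g} at $h^t$ and a perturbation argument. The exponential factors must be tracked carefully to land exactly on $e^{-5\bar K/\varepsilon}$ and the constant $3M_x^2$, for instance using $1+M_x^2\le 2M_x^2$ since $M_x\ge1$.
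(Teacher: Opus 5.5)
\paragraph{Verdict.} Your proposal is correct. The dual-gap rate follows the paper's own argument, but the iterate bound is proved by a genuinely different and shorter route.

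\paragraph{Dual-gap rate.} Your Steps 2--4 are the paper's proof. The paper combines the PL inequality at $(f^{t+1},g^{t+1},h^{t+1})$ (Lemma~\ref{lem: PL}) with the slope-ascent conditions (Lemma~\ref{lem: ascent}) to get $u_t-u_{t+1}\ge\tau u_{t+1}$. Your version is slightly leaner in two places:
\begin{itemize}
\item You multiply the vanishing integral in (\ref{eq: update g}) by $e^{\tildef^{t+1}/\varepsilon}$, where the paper uses $e^{f^t/\varepsilon}$. This controls $\ell_g^{t+1}$ by $\tildeh^{t+1}-h^t$ alone, so only the $h$-block ascent is needed.
\item You use that $\tildeh^{t+1}$ is the exact maximizer, which gives modulus $e^{-\bar K/\varepsilon}$ instead of $e^{-2K_h/\varepsilon}$.
\end{itemize}

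\paragraph{Caveat on the constant.} The Step 2 estimate as you describe it does not yield the stated $\tau$. Bounding the density of $p_u$ by $e^{\bar K/\varepsilon}$ and separately using $|e^a-1|\le e^{|a|}|a|$ with $|a|\le 2\bar K/\varepsilon$ puts $e^{6\bar K/\varepsilon}$ into the squared bound. The final rate then carries roughly $e^{-8\bar K/\varepsilon}$. The fix is to bound $e^{F_1/\varepsilon}-e^{F_2/\varepsilon}$ directly via (\ref{eq: exponential 3}), where $F_1,F_2\in[-\bar K,\bar K]$ are the two full exponents and differ only by $\tildeh^{t+1}-h^t$. This gives the factor $\varepsilon^{-2}e^{2\bar K/\varepsilon}$. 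Your chain then produces $(1\wedge\underline{\lambda})e^{-4\bar K/\varepsilon}/(2M_x^2)\ge\tau$, which implies the stated bound.

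\paragraph{Iterate convergence.} The paper adapts the finite-length argument of \cite{bolte2017error}:
\begin{itemize}
\item It derives $\sqrt{u_t}-\sqrt{u_{t+1}}\ge\zeta^{-1}(2\mathsf{d}_{t+1}-\mathsf{d}_t)$ and sums it to show the normalized iterates are Cauchy in $\calH$.
\item This requires extra work to compare $\mathsf{d}_{t+1}$, which involves the unnormalized $(\tildef^{t+1},\tildeg^{t+1},\tildeh^{t+1})$, with distances between consecutive normalized iterates.
\item It identifies the limit by dominated convergence and uniqueness of dual potentials, then reads off the rate.
\end{itemize}

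You instead use quadratic growth at the maximizer. Apply (\ref{eq: exponential 2}) expanded around $\bar F$ rather than $F$. The linear term $\int (F-\bar F)\,d\bar\pi$ cancels because $\ell_f,\ell_g,\ell_h$ vanish at $(\barf,\barg,\barh)$. Orthogonality from the normalizations and $\E[X]=0$ then gives
\[
\bar D-D(f^t,g^t,h^t)\ge \frac{(1\wedge\underline{\lambda})e^{-\bar K/\varepsilon}}{2\varepsilon}\,\big\|(f^t-\barf,g^t-\barg,h^t-\barh)\big\|_{\calH}^2 .
\]
The iterate rate follows in one line from the gap rate, with an explicit constant and no Cauchy or limit-identification step.

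The paper's route is the general Kurdyka--\L{}ojasiewicz template, which yields convergence of the iterates without presupposing quadratic growth at a known maximizer. Here, however, that growth estimate comes from the same computation as Proposition~\ref{prop: PL}, so your shortcut loses nothing. It transfers equally well to the modified algorithm.
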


\begin{remark}
The proof shows that
\[
\alpha_0 \big(\big \| \tildef^{t+1} - f^{t} \big \|_{L^2(\mu)}^2 + \big \| \tildeg^{t+1} - g^t \big \|_{L^2(\mu)}^2 + \big \| \tildeh^{t+1} - h^{t} \big \|_{L^2(\nu)}^2\big) \le D (f^{t+1},g^{t+1},h^{t+1})-D (f^{t},g^{t},h^{t})
\]
for some $\alpha_0 > 0$; see Lemma~\ref{lem: ascent} below. Hence, we also have 
\begin{equation*}
\big \| \tildef^{t+1} - \barf \big \|_{L^2(\mu)}^2 + \big \| \tildeg^{t+1} - \barg \big \|_{L^2(\mu)}^2 + \big \| \tildeh^{t+1} - \barh \big \|_{L^2(\nu)}^2 = O\big( \big(1+\tau\big)^{-t} \big).
\end{equation*}
\end{remark}

The theorem presents convergence rates,  with explicit constants, for the duality gap and for the dual potentials, although we made no attempt to optimize the constants. The contraction rate in Theorem \ref{thm: linear conv} is reminiscent of the linear convergence results of the Sinkhorn algorithm for standard entropic OT; cf. \cite{peyre2019computational} and \cite{carlier2022linear}. The exponential dependence of the contraction rate on $1/\varepsilon$  appears to be tight in most difficult cases; see Remark 4.15 in \cite{peyre2019computational}. Several recent works established contraction rates that avoid exponential dependence on $1/\varepsilon$, albeit with slower rates \cite{altschuler2017near, chakrabarty2021better,dvurechensky2018computational, ghosal2025convergence} or for a restricted class of marginals \cite{chizat2026sharper}. Extending such analyses to entropic VQR is left for future research.

\section{Modified Sinkhorn algorithm}
\label{sec: modified sinkhorn}

\subsection{Modified Sinkhorn algorithm}
As pointed out in Remark \ref{rem: sinkhorn}~(iv) above, an obvious drawback of the (vanilla) Sinkhorn algorithm is the need to solve the implicit functional equation (\ref{eq: update g}) at each iteration, which would require a subroutine. However, resorting to a numerical subroutine in Step 1 only guarantees that the key mean-zero constraint (\ref{eq: update g}) holds approximately but not exactly, which leaves a theory-implementation gap. To account for that,   we replace Step 1 in the Sinkhorn algorithm with one-step gradient ascent, and establish linear convergence of the modified scheme, thereby closing the theory-implementation gap.  The modified scheme is described as follows.

\begin{definition}[Modified Sinkhorn algorithm for entropic VQR]
\label{def:modified-sinkhorn}
Start from $(f^0,g^0,h^0) \in \calC_{\diamond}(\calU) \times \calC_\diamond(\calU;\R^{d_x}) \times \calC(\calX \times \calY)$. Let $\eta >0$.
At the $t$-th iterate, we update $(f^{t},g^{t},h^{t})$ as follows. 
\begin{enumerate}
\item[1.] Set 
\begin{equation*}
\begin{split}
\tildeh^{t+1}(x,y) &= - \varepsilon \log \int \exp \left ( \frac{f^{t}(u) +  \langle g^{t}(u),x\rangle- c(u,y)}{\varepsilon} \right ) \, d\mu (u), \ (x,y) \in \calX \times \calY, \\
\tildef^{t+1}(u) &= -\varepsilon \log \int \exp \left ( \frac{\langle g^{t}(u),x\rangle + \tildeh^{t+1}(x,y) - c(u,y)}{\varepsilon} \right ) \, d\nu (x,y), \ u \in \calU.
\end{split} 
\end{equation*}
\item[2.] Compute 
\begin{equation*}
% \tildeg^{t+1} = \proj_{\calK} \left ( g^t - \eta \hat{d}_g^{t} \right).
\tildeg^{t+1} = g^t - \eta d_g^{t},
\end{equation*}
where 
\begin{equation*}
d_g^{t} (u)= e^{\tildef^{t+1}(u)/ \varepsilon} \int x e^{(\langle g^{t}(u),x\rangle+\tildeh^{t+1}(x,y)-c(u,y))/\varepsilon} \, d\nu(x,y) \in \R^{d_x},  \ u \in \calU.
\end{equation*}
\item[3.] Finally, normalize $(\tildef^{t+1},\tildeg^{t+1},\tildeh^{t+1})$ as
\begin{equation*}
\begin{split}
&f^{t+1}(u) = \tildef^{t+1}(u) - \int \tildef^{t+1} \, d\mu, \quad g^{t+1} (u)= \tildeg^{t+1}(u) - \int \tildeg^{t+1} \,d\mu, \\
&h^{t+1}(x,y) = \tildeh^{t+1}(x,y) + \int \tildef^{t+1} \, d\mu + \left\langle \int \tildeg^{t+1}\, d\mu,  x \right\rangle.
\end{split}
\end{equation*}
\end{enumerate}
\end{definition}

\begin{remark}
A few comments on the modified Sinkhorn algorithm are in order. 
\begin{enumerate}
\item[(i)] The vector-valued function $d_g^t$ can be interpreted as a negative gradient of the dual objective with respect to $g$ evaluated at $(\tildef^{t+1},g^t,\tildeh^{t+1})$. Indeed, recalling the notation in (\ref{eq: gradient}), one has
\[
d_g^t = \ell_g(\tildef^{t+1},g^t,\tildeh^{t+1}). 
\]
Hence, $\tildeg^{t+1}$ corresponds to one-step gradient ascent applied to the objective $g \mapsto D(\tildef^{t+1},g,\tildeh^{t+1})$.
\item[(ii)] The update order can be interchanged with some adjustment of the normalization step. The current order is chosen so as to facilitate the convergence proof. In particular, as $\int f^t \, d\mu = 0$ and $\int g^t \, d\mu = 0$ by construction, Jensen's inequality yields $\tildeh^{t+1} \le \| c \|_\infty$, which will be frequently used in the proof of Proposition~\ref{prop: sinkhorn potential modified} below.
\end{enumerate}

\end{remark}

Similar to the (vanilla) Sinkhorn case, the main effort in the convergence analysis is devoted to establishing uniform-in-time bounds on the modified Sinkhorn iterates, in particular, on the iterates in the $g$-direction. In the previous case, $\tildeg^{t+1}$ satisfies the exact mean-zero constraint (\ref{eq: update g}), so that a similar technique from the proof of Proposition~\ref{prop: potential bound} can be adapted to establish the desired estimates. Such an analysis cannot be directly extended to the modified Sinkhorn case. In addition, it is \textit{a priori} unclear whether suitable coercivity estimates hold for the dual objective in the $g$-direction. These aspects constitute a new challenge. 
Before presenting the result, 
recall the notation $M_x$ defined in (\ref{eq: initial parameter}), and set $L_c:=\sup_{u \in \calU}\|u\| + \sup_{y \in \calY}\|y\|$ and $D_0:=D(f^0,g^0,h^0)$ as in the previous section. Without loss of generality, we will assume that $M_x \ge 1$. 

\begin{proposition}[Quantitative bounds on modified Sinkhorn iterates]
\label{prop: sinkhorn potential modified}
Suppose that $0 < \eta < \frac{2\varepsilon}{M_x^2}$. For $t \in \N_0$, we have
\[
\begin{split}
\big \| \tildeg^{t+1} \big \|_{\infty} \vee \big \| g^{t+1} \big \|_{\infty} &\le  \eta M_x +\left [  \| g^0 \|_\infty \vee \left ( \frac{2 K_g^{\star \star}+6\eta (1 \vee \delta) M_x^2}{\delta} \right )\right] \\
&\quad + \frac{2e^{1/2}}{\delta} \left ( 1-\frac{\eta M_x^2}{2\varepsilon} \right)^{-1} \left ( 1+\frac{2M_x}{\delta}  \right )(\bar{D}-D_0) =: K_g^{\star},
\end{split}
\]
where $\delta := 1/(2\|\Sigma_X^{-1}\|_{\op}M_x)$ and $K_g^{\star \star}$ is given by
\[
K_g^{\star \star}:= 5\|c\|_\infty -\frac{3}{2}D_0 + \varepsilon \log \left ( \frac{3}{2} \right ). 
\]
In addition, we have
\[
\begin{split}
&\| \tildef^{t+1} \|_\infty \vee \| f^{t+1} \|_\infty \le L_c \diam (\calU) + 2K_g^{\star}M_x + \|c\|_\infty - D_0 =: K_f^{\star}, \\
&\| \tildeh^{t+1} \|_\infty \vee \| h^{t+1} \|_\infty \le \|c\|_\infty + 2K_f^{\star}+2K_g^{\star}M_x =: K_h^{\star}.
\end{split}
\]
\end{proposition}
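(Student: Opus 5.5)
We first establish monotonicity of the dual objective along the modified iterates under the step-size condition $0<\eta<2\varepsilon/M_x^2$. Steps 1 are exact block maximizations in $h$ and then $f$, so they do not decrease $D$. For Step 2, note that after the $f$-update, $e^{\tildef^{t+1}(u)/\varepsilon}\int e^{(\langle g^t(u),x\rangle+\tildeh^{t+1}-c)/\varepsilon}\,d\nu=1$. Hence, for each fixed $u$, the map $g\mapsto \tildef^{t+1}(u)+\varepsilon\log\int e^{(\langle g,x\rangle+\tildeh^{t+1}-c)/\varepsilon}d\nu$ is the log-partition function of an exponential family with sufficient statistic $x/\varepsilon$. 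Its Hessian is a covariance matrix bounded by $M_x^2/\varepsilon$. Since $D(\tildef,g,\tildeh)=\int\tildef\,d\mu+\int\tildeh\,d\nu-\varepsilon(\iota-1)$ is pointwise-in-$u$ smooth in $g$ with the same curvature bound, the descent lemma gives
\[
D(\tildef^{t+1},\tildeg^{t+1},\tildeh^{t+1})-D(\tildef^{t+1},g^t,\tildeh^{t+1})\ \ge\ \eta\Big(1-\tfrac{\eta M_x^2}{2\varepsilon}\Big)\|d_g^t\|_{L^2(\mu)}^2 .
\]
This bound can be made $u$-wise rather than integrated, by using $\exp$-convexity along the segment and the normalization. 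Normalization leaves $D$ invariant, so $D_t:=D(f^t,g^t,h^t)$ is nondecreasing. Combining this with weak duality $D_t\le\bar D$ and telescoping yields
\[
\sum_t\eta\Big(1-\tfrac{\eta M_x^2}{2\varepsilon}\Big)\|d_g^t\|^2\le\bar D-D_0 .
\]
The per-step $u$-wise version of this bound is what feeds the $(1-\eta M_x^2/(2\varepsilon))^{-1}$ factor in $K_g^\star$.

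Next comes the main obstacle: a uniform bound on $\|g^t\|_\infty$. Since $d_g^t(u)$ is the mean of $x$ under the tilted probability $p_u^t\propto e^{(\langle g^t(u),x\rangle+\tildeh^{t+1}-c)/\varepsilon}\nu$, we have $\|d_g^t\|\le M_x$. Thus $\|\tildeg^{t+1}(u)\|\le\|g^t(u)\|+\eta M_x$, which explains the additive $\eta M_x$ term. The key step is a coercivity estimate. We adapt the proof of Proposition~\ref{prop: potential bound}: for $v=g^t(u)/\|g^t(u)\|$, build $q^{(v)}$ on $\calX\times\calY$ whose $x$-mean is $\delta v$, with $\delta=1/(2\|\Sigma_X^{-1}\|_{\op}M_x)$. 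Such a measure exists as a reweighting $(1+\langle \Sigma_X^{-1}\delta' v,x\rangle)\nu$ with nonnegative density. Expanding $\KL{q^{(v)}}{p_u^t}\ge0$ gives
\[
\delta\|g^t(u)\|\le K_g^{\star\star}+\varepsilon\log\tfrac32+(\text{terms involving }\tildef^{t+1}(u),\ \|\tildeh^{t+1}\|_{L^1(\nu)}) .
\]
Here $\tildeh^{t+1}\le\|c\|_\infty$ by Jensen, since $f^t,g^t$ are centered, and $\int\tildef^{t+1}d\mu,\ \int\tildeh^{t+1}d\nu$ are controlled from below via $D_t\ge D_0$. The same KL argument, now using exponential-family convexity, should show that $\langle d_g^t(u),v\rangle\ge \delta/2$-ish once $\|g^t(u)\|$ exceeds the threshold $(2K_g^{\star\star}+6\eta(1\vee\delta)M_x^2)/\delta$. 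The intended mechanism is that the gradient ascent step then shrinks the norm. More precisely, a step pointing outward by at most $\eta M_x$ can only occur when $\|g^t(u)\|$ is below this threshold, or when $\|d_g^t(u)\|$ is small relative to the energy excess.

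We then run an induction on $t$. The claim is that $\|g^t(u)\|$ stays below $\|g^0\|_\infty\vee(\text{threshold})$ plus an accumulated error. That error is controlled by the summed ascent gains $\frac{2e^{1/2}}{\delta}(1-\eta M_x^2/2\varepsilon)^{-1}(1+2M_x/\delta)(\bar D-D_0)$, with the $e^{1/2}$ arising from comparing tilted densities at $g^t$ and $\tildeg^{t+1}$ when $\eta\|x\|^2/\varepsilon\le 1/2$-type bounds hold. The centering step changes $g$ by $\int\tildeg^{t+1}d\mu$, whose norm is bounded by the same quantity. This accounts for the $\vee$ in $\|\tildeg^{t+1}\|_\infty\vee\|g^{t+1}\|_\infty$ (possibly after doubling, absorbed in the constants).

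Finally, the bounds on $f$ and $h$ follow routinely. Given $\|g^t\|_\infty,\|\tildeg^{t+1}\|_\infty\le K_g^\star$, the $\log$-integral formula for $\tildef^{t+1}$ together with the Lipschitz bound $|c(u,y)-c(u',y)|\le L_c\|u-u'\|$ controls the oscillation of $\tildef^{t+1}$ by $L_c\diam(\calU)+2K_g^\star M_x$. The mean $\int\tildef^{t+1}d\mu$ is controlled by $D_0\le D_t$ and $\tildeh^{t+1}\le\|c\|_\infty$. This gives $K_f^\star$, and plugging into the formula for $\tildeh^{t+1}$ and the normalization shift gives $K_h^\star$.
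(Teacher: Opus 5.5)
Your monotonicity step, the bound $\|d_g^t\|_\infty\le M_x$, and the final $f$- and $h$-bounds are fine. The argument for the uniform bound on $g^t$, however, has a genuine gap: you treat the normalization as a harmless one-step shift that ``accounts for the $\vee$'', whereas it is exactly the source of the accumulated error.

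\textbf{Where norm growth comes from.} Above the threshold, the uncentered step $\tildeg^{t+1}(u)=g^t(u)-\eta d_g^t(u)$ never increases the norm. There is no exceptional case ``$\|d_g^t(u)\|$ small'': coercivity forces $\|d_g^t(u)\|\ge\delta/2$ there. What can push $\|g^t(u)\|$ outward is the recentering $g^{t+1}=\tildeg^{t+1}+\eta v^t$, with $v^t=\int d_g^t\,d\mu$. This is a common vector for all $u$, and it is carried into every later iterate. Summing these shifts naively gives $\sum_t\eta\|v^t\|$. The ascent gains only control $\sum_t\|v^t\|^2\le\sum_t\|d_g^t\|_{L^2(\mu)}^2\lesssim\bar D-D_0$, which does not bound $\sum_t\|v^t\|$ uniformly in time.

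\textbf{The missing idea.} Put the centering inside the squared-norm recursion. Write $g^{t+1}=g^t-\eta(d_g^t-v^t)$ and use $\langle g^t(u),d_g^t(u)\rangle\ge\delta\|g^t(u)\|-K_g^{\star\star}$ to get
\[
\|g^{t+1}(u)\|^2\le\|g^t(u)\|^2-2\eta\big(\delta-\|v^t\|\big)\|g^t(u)\|+2\eta K_g^{\star\star}+4\eta^2M_x^2 .
\]
Whenever $\|v^t\|\le\delta/2$, the coercivity absorbs the drift, and $\|g^{t+1}(u)\|\le\|g^t(u)\|\vee\big(2K_g^{\star\star}+6\eta(1\vee\delta)M_x^2\big)/\delta$. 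Only steps with $\|v^t\|>\delta/2$ can add to the norm, each by at most $\eta(M_x+\|v^t\|)$. Let $\zeta=e^{1/2}\eta^{-1}(1-\eta M_x^2/(2\varepsilon))^{-1}$. Chebyshev-type counting gives at most $4\zeta(\bar D-D_0)/\delta^2$ such steps. Cauchy--Schwarz then gives $\sum\|v^t\|\le 2\zeta(\bar D-D_0)/\delta$ over those steps. This is what produces the factor $(1+2M_x/\delta)$ that you quote without derivation.

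\textbf{Circularity in the coercivity step.} Your KL inequality cannot bound $\delta\|g^t(u)\|$ directly as in the vanilla scheme. Without the mean-zero constraint, the pointwise size of $\tildef^{t+1}(u)$ is controlled only through an oscillation bound containing $2\|g^t\|_\infty M_x$. The condition $D_t\ge D_0$ controls only $\int\tildef^{t+1}\,d\mu$, not the pointwise value.

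Instead, set $A_u(\theta)=\varepsilon\log\int e^{(\langle\theta,x\rangle+\tildeh^{t+1}(x,y)-c(u,y))/\varepsilon}\,d\nu(x,y)$, so that $A_u(g^t(u))=-\tildef^{t+1}(u)$. Read the KL inequality as the lower bound $A_u(\theta)\ge\delta\|\theta\|-\big(4\|c\|_\infty-\tfrac32D_0+\varepsilon\log\tfrac32\big)$. Then combine it with two facts:
\begin{itemize}
\item convexity, $\langle\theta,\nabla A_u(\theta)\rangle\ge A_u(\theta)-A_u(0)$;
\item $A_u(0)\le\sup\tildeh^{t+1}\le\|c\|_\infty$.
\end{itemize}
This yields the inner-product bound used above, with $\tildef^{t+1}(u)$ never needing separate control.

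\textbf{A minor point on monotonicity.} The curvature bound $M_x^2/\varepsilon$ for $g\mapsto-\varepsilon e^{(\tildef^{t+1}(u)+A_u(g))/\varepsilon}$ is not global. It does hold on the segment between $g^t(u)$ and $\tildeg^{t+1}(u)$, because $A_u$ does not increase there when $\eta\le 2\varepsilon/M_x^2$. With that justification, your ascent bound without the paper's factor $e^{-1/2}$ is valid.
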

Again, without loss of generality, we will assume
\begin{equation*}
K_f^\star \ge \| \barf \|_\infty \vee \| f^0 \|_\infty, \ K_g^\star \ge \| \barg \|_\infty \vee \| g^0 \|_\infty, \ K_h^\star \ge \| \barh \|_\infty \vee \| h^0 \|_\infty.
\end{equation*}

The step size condition, $0 < \eta < \frac{2\varepsilon}{M_x^2}$, guarantees that the dual objective monotonically increases along the iterates. Specifically, in Lemma~\ref{lem: monotonicity modified}, we will establish the following coercivity estimate:
\begin{equation}
D(\tildef^{t+1},\tildeg^{t+1},\tildeh^{t+1}) - D(\tildef^{t+1},g^{t},\tildeh^{t+1}) \ge e^{-1/2} \eta \left ( 1-\frac{\eta M_x^2}{2\varepsilon} \right) \| d_g^t \|_{L^2(\mu)}^2,
\label{eq: coercivity}
\end{equation}
which plays a key role in the proof of Proposition~\ref{prop: sinkhorn potential modified}. The preceding step size condition is sharp, in the sense that one can find marginals for which, when $\eta > \frac{2\varepsilon}{M_x^2}$, one iteration of the modified Sinkhorn algorithm strictly \textit{decreases} the dual objective; see Appendix \ref{sec: counterexample}. 

We now present the linear convergence result for the modified Sinkhorn algorithm. 

\begin{theorem}[Linear convergence of modified Sinkhorn algorithm]
\label{thm: linear conv modified}
Suppose that $0 < \eta < \frac{2\varepsilon}{M_x^2}$. There exists $\tau^{\star} > 0$ such that 
\[
\bar{D} - D(f^t,g^t,h^t) \le (1+\tau^{\star})^{-t}\big (\bar{D}-D_0), \ t \in \N_0,
\]
where $\bar{D} := D(\barf,\barg,\barh)$. 
Furthermore, as $t \to \infty$,
\[
\big \| f^t-\barf \big\|_{L^2(\mu)}^2 + \big\| g^t-\barg \big\|_{L^2(\mu)}^2 + \big\| h^t - \barh \big\|_{L^2(\nu)}^2 = O\big( (1+\tau^{\star})^{-t}\big).
\]
\end{theorem}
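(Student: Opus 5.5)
The argument follows the template of Theorem~\ref{thm: linear conv}: combine the local PL inequality of Proposition~\ref{prop: PL} with a sufficient-increase (slope-ascent) estimate for one full sweep. Both are made uniform in $t$ by the a priori bounds of Proposition~\ref{prop: sinkhorn potential modified}. Set $K^\star := K_f^\star + K_g^\star M_x + K_h^\star + \|c\|_\infty$. By the standing conventions $K_f^\star \ge \|\barf\|_\infty$, etc., every exponent $f+\langle g,x\rangle+h-c$, whether evaluated at an iterate, at an intermediate triple such as $(\tildef^{t+1},g^t,\tildeh^{t+1})$, or at $(\barf,\barg,\barh)$, lies in $[-K^\star,K^\star]$. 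On this $L^\infty$-ball the integrand $e^{(\cdot)/\varepsilon}$ has second derivative between $e^{-K^\star/\varepsilon}/\varepsilon$ and $e^{K^\star/\varepsilon}/\varepsilon$.

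\textbf{Step 1: sufficient increase.} I split one iteration into three moves: the exact $h$-maximization, the exact $f$-maximization, and the gradient step in $g$. The normalization in Step~3 does not change $D$.
\begin{itemize}
\item For the two exact block maximizations, strong concavity of $D$ in each block on the bounded set gives
\[
D(\tildef^{t+1},g^t,\tildeh^{t+1}) - D(f^t,g^t,h^t) \ge \tfrac{e^{-K^\star/\varepsilon}}{2\varepsilon}\big(\|\tildeh^{t+1}-h^t\|_{L^2(\nu)}^2+\|\tildef^{t+1}-f^t\|_{L^2(\mu)}^2\big).
\]
\item For the $g$-move, the coercivity estimate (\ref{eq: coercivity}) from Lemma~\ref{lem: monotonicity modified} gives an increase of at least $c_\eta\|d_g^t\|_{L^2(\mu)}^2$, where $c_\eta = e^{-1/2}\eta(1-\eta M_x^2/(2\varepsilon))>0$.
\end{itemize}
Summing, $D^{t+1}-D^t \ge \alpha_0\,\Delta_t$, where $\Delta_t$ is the sum of the squared block increments and $\|d_g^t\|^2$.

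\textbf{Step 2: relative error bound.} I bound the full gradient at the new iterate, $\|(\ell_f,\ell_g,\ell_h)(f^{t+1},g^{t+1},h^{t+1})\|_{\calH}^2$, by $C\Delta_t$; equivalently at the unnormalized triple, since the $\ell$'s are invariant under the affine shift. Each block is handled as follows.
\begin{itemize}
\item $\ell_g$: it equals $d_g^t$ plus the difference $\ell_g(\tildef^{t+1},\tildeg^{t+1},\tildeh^{t+1})-\ell_g(\tildef^{t+1},g^t,\tildeh^{t+1})$. Lipschitzness of $\ell_g$ on the bounded set makes this difference at most $C\|\tildeg^{t+1}-g^t\| = C\eta\|d_g^t\|$.
\item $\ell_f$: it vanishes at $(\tildef^{t+1},g^t,\tildeh^{t+1})$ by construction, so it is controlled by $\eta\|d_g^t\|$.
\item $\ell_h$: it vanishes at $(f^t,g^t,\tildeh^{t+1})$, so it is controlled by $\|\tildef^{t+1}-f^t\|+\eta\|d_g^t\|$.
\end{itemize}
For these Lipschitz bounds, note that differences of the exponentials are bounded pointwise by $e^{K^\star/\varepsilon}\varepsilon^{-1}|\Delta\text{exponent}|$, and then apply Jensen/Cauchy--Schwarz to pass from pointwise to $L^2$ bounds. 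Factors $M_x$ appear for the $g$-component.

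\textbf{Step 3: combine.} Proposition~\ref{prop: PL} with $K=K^\star$ gives $\bar D - D^{t+1}\le \beta\, C\Delta_t \le (\beta C/\alpha_0)(D^{t+1}-D^t)$. Rearranging, $\bar D - D^{t+1}\le (1+\tau^\star)^{-1}(\bar D-D^t)$ with $\tau^\star=\alpha_0/(\beta C)$, and induction gives the first claim.

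\textbf{Step 4: iterates.} Local strong concavity of $D$ on the normalized space (from the proof of Proposition~\ref{prop: PL}) yields $\|(f^t,g^t,h^t)-(\barf,\barg,\barh)\|_{\calH}^2\lesssim \bar D - D^t$. Alternatively, follow \cite{bolte2017error}: sum $\sqrt{D^{t+1}-D^t}$ geometrically, which bounds the step lengths. Either route gives the $O((1+\tau^\star)^{-t})$ rate for the iterates.

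\textbf{Main obstacle.} I expect the main obstacle to be Step~2 for the $g$-block. The $L^2$ Lipschitz bound and the role of the step size must be reconciled: $\eta<2\varepsilon/M_x^2$ must keep $c_\eta$ positive while the Lipschitz constants, carrying factors like $e^{2K^\star/\varepsilon}$, stay finite. That is why $\tau^\star$ is stated only as existing. All of this depends on the uniform bounds already supplied by Proposition~\ref{prop: sinkhorn potential modified}.
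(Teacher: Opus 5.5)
Your argument for the objective rate is essentially the paper's. It combines the PL inequality of Proposition~\ref{prop: PL} at the normalized iterate with a two-part slope-ascent estimate built from the same $h$/$f$/$g$ decomposition and the coercivity bound in the remark after Lemma~\ref{lem: monotonicity modified}. It also uses the same observations that $\ell_f$ vanishes at $(\tildef^{t+1},g^t,\tildeh^{t+1})$ and $\ell_h$ vanishes at $(f^t,g^t,\tildeh^{t+1})$. One bookkeeping difference: you keep $\|d_g^t\|_{L^2(\mu)}^2$ itself in $\Delta_t$ rather than converting to $\|\langle \tildeg^{t+1}-g^t,x\rangle\|_{L^2(\mu\otimes\nu)}^2$, which avoids the factor $2/(\underline{\lambda}\eta^2)$ in the paper's $\beta^\star$.

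The genuine departure is the iterate claim. The paper adapts Theorem~14 of \cite{bolte2017error}. It telescopes $\sqrt{u_t}-\sqrt{u_{t+1}}\ge\zeta^{-1}(2\mathsf{d}_{t+1}-\mathsf{d}_t)$ to get a finite-length property, and compares the unnormalized increments $\mathsf{d}_{t+1}$ with the normalized ones. From this it concludes that the iterates are Cauchy in $\calH$, identifies the limit via dominated convergence and uniqueness of dual potentials, and then bounds the tail. Your primary route is instead a quadratic growth bound. Repeat the computation in the proof of Proposition~\ref{prop: PL} with the roles of $(f,g,h)$ and $(\barf,\barg,\barh)$ exchanged. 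The linear term then pairs with $(\ell_f,\ell_g,\ell_h)(\barf,\barg,\barh)$, which is zero by (\ref{eq: f})--(\ref{eq: h}), so
\[
\bar{D}-D(f^t,g^t,h^t)\ge\frac{(1\wedge\underline{\lambda})e^{-\bar{K}^{\star}/\varepsilon}}{2\varepsilon}\big\|(f^t-\barf,\,g^t-\barg,\,h^t-\barh)\big\|_{\calH}^2 .
\]
This is valid because the iterates are normalized and both exponents are bounded below by $-\bar{K}^{\star}$ (for $t=0$ through the standing convention on $K_f^\star,K_g^\star,K_h^\star$). It transfers the objective rate to the iterates at once, with an explicit constant, and needs no completeness, subsequence, or separate uniqueness argument. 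You should state the role swap explicitly, since the paper's proof of Proposition~\ref{prop: PL} expands around $(f,g,h)$ rather than around the maximizer. The paper's route buys a finite-length property of the whole trajectory within a generic error-bound framework that does not rely on quadratic growth at the limit. Here that growth is available, so your argument is the shorter one.
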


\begin{remark}
The proof shows that
\[
\alpha_0 \big (\| \tildef^{t+1}-f^t\|_{L^2(\mu)}^2 + \| \tildeg^{t+1}-g^t \|_{L^2(\mu)}^2 + \| \tildeh^{t+1} - h^t \|_{L^2(\nu)}^2 \big )
\le D(f^{t+1},g^{t+1},h^{t+1}) - D(f^t,g^t,h^t) 
\]
for some constant $\alpha_0 > 0$; see Lemma~\ref{lem: slope ascent modified} below. Hence, we also have
\[
\big \| \tildef^{t+1}-\barf\big\|_{L^2(\mu)}^2 + \big\| \tildeg^{t+1}-\barg \big \|_{L^2(\mu)}^2 + \big \| \tildeh^{t+1} - \barh \big\|_{L^2(\nu)}^2 = O\big( (1+\tau^{\star})^{-t}\big).
\]
\end{remark}

An inspection of the proof shows that $\tau^\star$ can be chosen 
as
\[
\tau^{\star} = \frac{2(1 \wedge \underline{\lambda})e^{-\bar{K}^{\star}/\varepsilon}}{\varepsilon} \times \left [ \frac{e^{-2K_h^{\star}/\varepsilon}}{2\varepsilon} \bigwedge \left (  \frac{e^{-1/2}}{M_x^2\eta} \left ( 1-\frac{\eta M_x^2}{2\varepsilon} \right) \right ) \right ] \times \left ( \frac{2}{\underline{\lambda}\eta^2} + \frac{5M_x^2e^{2\bar{K}^{\star}/\varepsilon}}{\varepsilon^2} \right)^{-1},
\]
where $\bar{K}^{\star} := K_f^{\star}+K_g^{\star}M_x + K_h^{\star} + \| c \|_\infty$. The exact value of $\tau^\star$ is likely an artifact of the proof technique, but we do not pursue the best possible constant for the sake of simplicity of the analysis. 

As for the step size $\eta$, the coercivity estimate in (\ref{eq: coercivity}) implies that 
\[
\eta = \frac{\varepsilon}{M_x^2}
\]
would be a reasonable choice. With this choice of $\eta$, $\tau^\star$ is bounded below by a constant multiple of 
\[
\frac{(1 \wedge \underline{\lambda})e^{-3\bar{K}^\star/\varepsilon}}{2M_x^4/\underline{\lambda} + 5M_x^2e^{2\bar{K}^\star/\varepsilon}},
\]
which is further bounded below, up to a constant, by 
\[
\frac{(1 \wedge \underline{\lambda})e^{-5\bar{K}^\star/\varepsilon}}{M_x^2}
\]
when $\varepsilon$ is small enough. As such, the size of $\tau^\star$ is comparable to the vanilla Sinkhorn case from Theorem \ref{thm: linear conv} when $\varepsilon$ is small.

\subsection{Computing derivatives of $\barf$ and $\barg$}
\label{sec: derivative}

In applications, one would be interested in computing the derivatives of $\barf$ and $\barg$. However, in practice, we run the algorithm for discrete or discretized distributions, so directly evaluating the derivatives of $f^t$ and $g^t$ would be nontrivial. In the following, we will derive alternative expressions for the derivatives of $\barf$ and $\barg$, which provide a simple way to approximately compute them without the need to directly differentiate $f^t$ and $g^t$.

Suppose that $\mu$ is absolutely continuous with support $\calU$ being (compact and) convex, and that $(X,Y)$ satisfies a quasi-linear representation of the form 
\begin{equation*}
Y  = \beta_0(U) + \beta_1(U)^\top X, \ U \sim \mu, \ \E[X \mid U] = 0, \ \text{a.s.}
\end{equation*}
for some mappings $\beta_0: \calU \to \R^{d_y}$ and $\beta_1: \calU \to \R^{d_x \times d_y}$ such that  $u \mapsto \beta_0(u) + \beta_1(u)^\top x$ agrees with the gradient of a convex function $\mu$-a.e.~$u$ for each $x \in \calX$; cf. Section 3.2 in \cite{carlier2016vector}.
Theorem 3.2 in \cite{carlier2016vector} provides a characterization of $\beta_0$ and $\beta_1$ using solutions to the dual problem for the unregularized VQR problem (\ref{eq: vqr}), which reads
\begin{equation}
\sup_{(\varphi,\psi,\xi)} \int_{\calU} \varphi \, d\mu + \int_{\calX \times \calY} \xi \, d\nu,
\label{eq: dual VQR}
\end{equation}
where the supremum is taken over all $(\varphi,\psi,\xi) \in \calC(\calU) \times \calC(\calU; \R^{d_x}) \times \calC(\calX \times \calY)$ satisfying $ \varphi (u) + \langle \psi(u),x \rangle + \xi(x,y) \le c(u,y)$ for all $(u,x,y) \in \calU \times \calX \times \calY$.
Under regularity conditions, Theorem 3.2 in \cite{carlier2016vector} shows that $\beta_0$ and $\beta_1$ agree with $u - \nabla \varphi$ and $- J\psi$, respectively, where $(\varphi,\psi,\xi)$ solve the dual problem (\ref{eq: dual VQR}). Here $\nabla \varphi$ and $J\psi$ are the gradient and Jacobian matrix of $\varphi$ and $\psi$, respectively, i.e.,
\begin{equation*}
\nabla \varphi(u) := \left ( \frac{\partial \varphi(u)}{\partial u_j}\right )_{1 \le j \le d_y} \in \R^{d_y} \quad \text{and} \quad 
J\psi(u) := \left ( \frac{\partial \psi_i(u)}{\partial u_j}\right )_{\substack{1 \le i \le d_x \\ 1 \le j \le d_y}} \in \R^{d_x \times d_y}. 
\end{equation*}

As such, entropic analogs of $\beta_0(u)$ and $\beta_1(u)$ can be defined by $u-\nabla \barf(u)$ and $-J\barg(u)$.
Recall that $\barf$ and $\barg$ can be extended to smooth functions on $\R^{d_y}$ that satisfy (\ref{eq: f}) and (\ref{eq: g}) for all $u \in \R^{d_y}$ (cf. Remark \ref{rem: potential}). Observe that $\bar{\pi}_u$ has a version of the form
\begin{equation*}
\frac{d\bar{\pi}_u}{d\nu} (x,y) = e^{(\barf(u)+\langle \barg(u),x\rangle + \barh(x,y)-c(u,y))/\varepsilon}. 
\end{equation*}
Differentiating both sides of (\ref{eq: f}) and (\ref{eq: g}) with respect to $u$ and using $\int x \, d\pi_u = 0$, we arrive at the expressions
\begin{equation*}
\nabla \barf(u) =u -  \underbrace{\E_{\bar{\pi}_u} \big [ \tilde{Y}\big]}_{=:B_0(u)}\quad \text{and} \quad J\barg(u) = - \underbrace{\Big ( \E_{\bar\pi_u} \big [ \tilde{X}\tilde{X}^\top\big ]\Big)^{-1}\E_{\bar\pi_u} \big [ \tilde{X}\tilde{Y}^\top\big ] }_{=:B_1(u)},
\end{equation*}
where $\E_{\bar{\pi}_u}$ means that the expectation is taken with respect to $(\tilde{X},\tilde{Y}) \sim \bar{\pi}_u$. Nonsingularity of the matrix $\E_{\bar\pi_u} \big [ \tilde{X}\tilde{X}^\top\big ]$ follows by that of $\E[XX^\top]$ and the fact that 
\[
\barf(u)+\langle \barg(u),x\rangle + \barh(x,y)-c(u,y) \ge \barf(u) - M_x \| \barg(u) \| - \|\barh\|_\infty - \sup_{y \in \calY}c(u,y) > -\infty.
\]
The expressions for $B_0$ and $B_1$ above are well-defined even without the assumptions that $\mu$ is absolutely continuous and $\calU$ is convex. Below, we will consider computing $B_0$ and $B_1$ using the modified Sinkhorn algorithm. 

To approximate $\bar{\pi}$, we use $\pi^t$, where
\begin{equation*}
\frac{d\pi^t}{d(\mu \otimes \nu)} (u,x,y) = e^{(\tilde{f}^{t+1}(u)+\langle g^t(u),x\rangle + \tildeh^{t+1}(x,y)-c(u,y))/\varepsilon},
\end{equation*}
which has marginal $\mu$ on $\calU$ by construction, so $\pi^t_u$ has a density
\begin{equation*}
\frac{d\pi^t_u}{d\nu}(x,y) = e^{(\tilde{f}^{t+1}(u)+\langle g^t(u),x\rangle + \tildeh^{t+1}(x,y)-c(u,y))/\varepsilon}. 
\end{equation*}
As such, one can approximately compute $B_0$ and $B_1$ by
\begin{equation*}
B_0^t(u) = \E_{\pi_u^t}\big[\tilde{Y}\big] \quad \text{and} \quad B_1^t (u)= \Big (\E_{\pi_u^t}\big[\tilde{X}\tilde{X}^\top\big ] \Big)^{-1}\E_{\pi_u^t}\big[\tilde{X}\tilde{Y}^\top\big ] .
\end{equation*}
We shall establish the following guarantee for $B_0^t$ and $B_1^t$. 
\begin{proposition}
\label{prop: derivative}
Under the setting of Theorem \ref{thm: linear conv modified}, as $t \to \infty$, we have 
\begin{equation*}
\big \| B_0^t- B_0 \big\|_{L^2(\mu)}^2 = O\big ( (1+\tau^{\star}\big )^{-t/2} \big) \quad \text{and} \quad \big\|B_1^t - B_1 \big\|_{L^2(\mu)}^2 = O\big ( (1+\tau^{\star}\big )^{-t/2}\big),
\end{equation*}
where $\| B_1^t - B_1 \|_{L^2(\mu)}^2 = \int \| B_1^t(u) - B_1(u) \|_{\op}^2 \, d\mu(u)$. 
\end{proposition}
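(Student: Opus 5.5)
The plan is to compare the conditional densities of $\pi^t_u$ and $\bar\pi_u$ with respect to $\nu$, and to show that their difference is controlled pointwise by the differences of the potentials. Write
\[
\phi^t(u,x,y) := \tildef^{t+1}(u)+\langle g^t(u),x\rangle + \tildeh^{t+1}(x,y)-c(u,y), \qquad \bar\phi(u,x,y) := \barf(u)+\langle \barg(u),x\rangle + \barh(x,y)-c(u,y).
\]
By Proposition~\ref{prop: sinkhorn potential modified} and the conventions on $K^\star_f,K^\star_g,K^\star_h$, both $\phi^t$ and $\bar\phi$ are bounded in absolute value by $\bar K^\star$, uniformly in $t$. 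The mean value theorem, $|e^a-e^b|\le e^{\max(a,b)}|a-b|$, then gives
\[
\Big|\frac{d\pi^t_u}{d\nu}-\frac{d\bar\pi_u}{d\nu}\Big|(x,y) \le \frac{e^{\bar K^\star/\varepsilon}}{\varepsilon}\Big(|\tildef^{t+1}(u)-\barf(u)| + M_x\|g^t(u)-\barg(u)\| + |\tildeh^{t+1}(x,y)-\barh(x,y)|\Big).
\]

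\textbf{Step 1: control of $B_0$.} Since $\calY$ is bounded, integrating the density bound against $y$ gives
\[
\|B^t_0(u)-B_0(u)\| \lesssim |\tildef^{t+1}(u)-\barf(u)| + \|g^t(u)-\barg(u)\| + \int|\tildeh^{t+1}-\barh|\,d\nu .
\]
Squaring and integrating in $\mu$, and using Cauchy--Schwarz on the last term, gives
\[
\|B_0^t-B_0\|_{L^2(\mu)}^2 \lesssim \|\tildef^{t+1}-\barf\|^2_{L^2(\mu)}+\|g^t-\barg\|^2_{L^2(\mu)}+\|\tildeh^{t+1}-\barh\|^2_{L^2(\nu)} .
\]
By Theorem~\ref{thm: linear conv modified} and the remark after it, this is $O((1+\tau^\star)^{-t})$. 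That rate is even stronger than claimed, so one factor of $(1+\tau^\star)^{-t/2}$ is to spare.

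\textbf{Step 2: control of the moment matrices.} The same bound applies to $\E_{\pi^t_u}[\tilde X\tilde X^\top]$ and $\E_{\pi^t_u}[\tilde X\tilde Y^\top]$. For $B_1$ I would use
\[
A^{-1}C - \bar A^{-1}\bar C = A^{-1}(C-\bar C) + A^{-1}(\bar A - A)\bar A^{-1}\bar C ,
\]
which requires a uniform-in-$t$ lower bound on the smallest eigenvalue of $A_t(u):=\E_{\pi^t_u}[\tilde X\tilde X^\top]$. This follows from $d\pi^t_u/d\nu \ge e^{-\bar K^\star/\varepsilon}$, which gives $A_t(u)\succeq e^{-\bar K^\star/\varepsilon}\Sigma_X\succeq e^{-\bar K^\star/\varepsilon}\underline\lambda I$. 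The same argument applies to $\bar A$, and $\|\bar C\|_{\op}$ is bounded by compactness. Hence $\|B^t_1(u)-B_1(u)\|_{\op}$ obeys the same pointwise bound as in Step 1, and the $L^2(\mu)$ estimate follows in the same way.

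\textbf{Main obstacle.} The only delicate point is that the convergence needed is of $\tildef^{t+1}$, $g^t$ and $\tildeh^{t+1}$, rather than of the normalized iterates $(f^t,g^t,h^t)$. The remark after Theorem~\ref{thm: linear conv modified} supplies the rate for $\tildef^{t+1}$ and $\tildeh^{t+1}$, and the theorem itself supplies it for $g^t$, so the plan goes through.

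If instead only a rate on the duality gap were available, I would use Pinsker's inequality,
\[
\int \|\pi^t_u-\bar\pi_u\|_{TV}^2\,d\mu(u) \lesssim \KL{\pi^t}{\bar\pi} .
\]
Since $\pi^t$ has $U$-marginal $\mu$, $\KL{\pi^t}{\bar\pi}$ can be expressed through the gap and the gradient residuals, and is therefore $O((1+\tau^\star)^{-t})$. Bounded integrands then yield the rate for $\|B^t_i-B_i\|$ in $L^2(\mu)$. This route likely explains the $t/2$ exponent stated in Proposition~\ref{prop: derivative}, since Pinsker, or a square root taken at some step, loses half of the exponent.
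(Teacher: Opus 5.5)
Your proof is correct, and it takes a genuinely different route from the paper.

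\textbf{The paper's route.} The paper never compares the conditional densities pointwise. It uses the identity
$\varepsilon\,\KL{\bar\pi}{\pi^t} = -\int(\tildef^{t+1}-\barf)\,d\mu-\int(\tildeh^{t+1}-\barh)\,d\nu$. The $g$-term drops out because $\int x\,d\bar\pi_u=0$. It then applies Pinsker's inequality conditionally on $u$, together with the chain rule for KL. For $B_1$ it finishes with the same inverse-matrix perturbation bound and eigenvalue floor $e^{-\bar K^\star/\varepsilon}\underline{\lambda}$ that you use.

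\textbf{Your route and what it buys.} Your pointwise Lipschitz bound on $d\pi^t_u/d\nu-d\bar\pi_u/d\nu$ is legitimate. Proposition~\ref{prop: sinkhorn potential modified} and the conventions on $K_f^\star,K_g^\star,K_h^\star$ give $|\phi^t|,|\bar\phi|\le\bar K^\star$, and each $\pi^t_u$ is a probability measure by the definition of $\tildef^{t+1}$. Your approach needs two ingredients the paper's Pinsker step does not: the $L^2$ convergence of $g^t$ and the uniform $L^\infty$ bounds. Both are available. In return, you bound the squared errors of $B_0^t$, $B_1^t$ by the squared potential errors. That gives $O((1+\tau^\star)^{-t})$, strictly better than the stated $O((1+\tau^\star)^{-t/2})$.

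\textbf{Where the paper's rate is lost.} Your closing speculation misattributes the lost factor. Pinsker's inequality does not halve the exponent. If the KL divergence were $O((1+\tau^\star)^{-t})$, as you yourself argue for $\KL{\pi^t}{\bar\pi}$ via the gap and gradient residuals, Pinsker would return the full rate. The paper loses the square root only because it bounds $\KL{\bar\pi}{\pi^t}$ linearly, by $\varepsilon^{-1}\big(\|\tildef^{t+1}-\barf\|_{L^2(\mu)}+\|\tildeh^{t+1}-\barh\|_{L^2(\nu)}\big)$, rather than quadratically.
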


The proof relies on the results from Theorem \ref{thm: linear conv modified} and Pinsker's inequality (cf. Theorem 7.10 in \cite{polyanskiy2025information}).

\section{Numerical experiments}
\label{sec: numerical}

In this section, we study the empirical behavior of the modified Sinkhorn
algorithm. We shall focus on the modified algorithm as its exact implementation adheres to a rigorous convergence guarantee (cf. the discussion at the start of the previous section). The experiments have the following objectives.  First, we examine whether
the dual objective exhibits linear convergence predicted by
Theorem \ref{thm: linear conv modified}, and how the observed contraction changes with the regularization
level and the covariate dimension.  Second, we assess conservativeness of the
sufficient step size condition given in Theorem \ref{thm: linear conv modified}. 
Appendix \ref{sec: additional experiments} contains numerical comparisons between the vanilla and modified Sinkhorn algorithms and an experiment pertaining to the problem considered in Section \ref{sec: derivative}. All experiments were carried out using the programming language \texttt{Julia} \cite{bezanson2017julia}.

\subsection{Discrete implementation}
\label{sec: discrete implementation}

Algorithm 1 below details the implementation of the modified Sinkhorn algorithm when the marginals $\mu$ and $\nu$ are discrete with $m$ and $n$ atoms, represented as probability simplex vectors  $a \in \Delta_m$ and $b \in \Delta_n$, respectively, where $\Delta_k := \{p \in \R^k_{\ge 0} : \sum_{i=1}^k p_i = 1\}$. All logarithms and exponentials of vectors or matrices are understood componentwise.
In addition,  we preprocess the design matrix $\mathbb{X}$ so that $b^\top \mathbb{X}=0$.

At each iteration $t$, updating the dual potentials $f^t$ and $h^t$ involves evaluating the matrices $H$ and $F$, which entails $O(m n d_x)$ arithmetic operations.
Similarly, computing the gradient direction $D_g^t$ entails $O(m n d_x)$  operations. 
 As such, the computational cost of Algorithm 1 is $O(mnd_x)$ per iteration, and if $d_x$ is treated as constant, it is $O(mn)$ per iteration, which is comparable to the (standard) Sinkhorn algorithm (cf. Chapter 4 in \cite{peyre2019computational}).  The complexity for finding $\delta_0$-optimal solutions for the dual objective is 
    \[
    O \left ( mnd_x \frac{\log ((\bar{D}-D_0)/\delta_0)}{\log (1+\tau^\star)} \right )
    \]
 under the notation of Theorem 4.1.

 \medskip

{\small 
\begin{algorithm}[H]
\caption{Discrete Modified Sinkhorn (matrix form)}
\KwIn{$a\in\Delta_m$, $b\in\Delta_n$, $\mathbb{X}\in\mathbb R^{n\times d_x}$, $C\in\mathbb R^{m\times n}$, $\varepsilon,\eta>0$}
\KwIn{$f^0\in\mathbb R^m$, $ G^0\in\mathbb R^{m\times d_x}$, $ h^0\in\mathbb R^n$}

\For{$t=0,1,2,\dots$}{
    \tcp*[l]{Update $f$ and $h$}
    $Q^t \leftarrow G^t\mathbb{X}^\top$\; 
    $H \leftarrow \bigl(f^t\mathbf 1_n^\top +  Q^t - C\bigr)/\varepsilon$\;
    $\tilde{h}^{t+1} \leftarrow -\varepsilon \log\!\bigl(\exp(H)^\top a\bigr)$\;
    $F \leftarrow \bigl( Q^t + \mathbf 1_m (\tilde{h}^{t+1})^\top - C\bigr)/\varepsilon$\;
    $\tilde{f}^{t+1} \leftarrow -\varepsilon \log\!\bigl(\exp(F)b\bigr)$\;
    \tcp*[l]{Update $G$}
    $W \leftarrow \exp (F)\operatorname{diag} (b)$\;
    $D_g^t \leftarrow \operatorname{diag}\!\bigl(e^{\tildef^{t+1}/\varepsilon}\bigr)\,W\mathbb{X}$\;
    $\tilde{G}^{t+1} \leftarrow  G^t - \eta D_g^t$\;
    \tcp*[l]{Normalization}
    $\alpha \leftarrow a^\top \tilde{f}^{t+1}$, $\beta^\top \leftarrow a^\top \tilde{G}^{t+1}$\; 
    $f^{t+1} \leftarrow \tilde{f}^{t+1} - \alpha \mathbf 1_m$,
    $G^{t+1} \leftarrow \tilde{G}^{t+1} - \mathbf 1_{m} \beta^\top$,
    $h^{t+1} \leftarrow \tilde{h}^{t+1} + \alpha \mathbf 1_n + \mathbb{X} \beta$\;
}
\end{algorithm}
}

\subsection{Experimental settings}
In all experiments in this section, we set $d_y = 2$ and use the empirical distribution of $m=1000$ independent draws from $\operatorname{Unif}([0,1]^2)$ as the reference measure $\mu$.\footnote{The random seed used in the experiments was set to 123 for each figure.} As for $\nu$, we consider the following synthetic and real data settings.

\textbf{Synthetic Gaussian data.}
We set
\[
  \nu=\frac1n\sum_{i=1}^n\delta_{(X_i,Y_i)},
  \qquad
  (X_i,Y_i)\stackrel{\mathrm{i.i.d.}}{\sim}\calN(0,\Sigma),
\]
where $n=1000$ and $\Sigma=(0.5^{|j-k|})_{1\le j,k\le d_x+d_y}$.
We consider $d_x\in\{1,4,16,64\}$.

\textbf{ANSUR II data.}
We use the male sample from the U.S. Army Anthropometric Survey II (ANSUR II)\footnote{Available from \url{https://www.openlab.psu.edu/ansur2/}.},
which contains $n=4082$ observations.  Following the two-dimensional VQR
illustration in~\cite{carlier2022vector}, we take
\[
  Y=(\texttt{weight},\texttt{thigh circumference})^\top\in\mathbb{R}^2.
\]
For the covariates, we use the first 64 anthropometric measurement variables in
the data set.  For each $d_x\in\{1,4,16,64\}$, $X$ consists of the first $d_x$
of these variables; neither response variable is included in the
covariates.  The empirical distribution of $(X,Y)$ is used for $\nu$.

In these experiments, we further preprocess the design matrix so that $M_x=1$ (cf. Remark \ref{rem: scaing}). We always use the zero initialization, $(f^0,G^0,h^0) = (0,O,0)$. For $t \in \N_0$, let $D_t = D(f^t,g^t,h^t)$. 

\subsection{Log duality gaps and contraction factors}

We first look at empirical convergence of the duality gaps and contraction factors
\[
\rho_t = \frac{D_{t}-D_{t-1}}{D_{t-1}-D_{t-2}}
\]
which would approach some constant under linear convergence.
 As for the step size, we use
\[
\eta = \varepsilon
\]
as suggested in the previous section (recall that we preprocessed the data so that $M_x=1$). 
In addition, since no closed-form expression for the optimal dual value is available in either
experiment,   for each configuration, we run the algorithm for 800 iterations
and use $D_{800}$ as a numerical proxy for $\bar D$. Figure~\ref{fig:convergence-gap} reports
$\log(D_{800}-D_t)$ for $0\le t\le250$, with
$\varepsilon\in\{1,4^{-1},4^{-2},4^{-3}\}$ and $\eta=\varepsilon$.

% ----------------------------------------------------------------------
% Figure 1(a): log dual objective gaps
% ----------------------------------------------------------------------
\begin{figure}[p]
    \centering

    \includegraphics[width=0.78\textwidth]
        {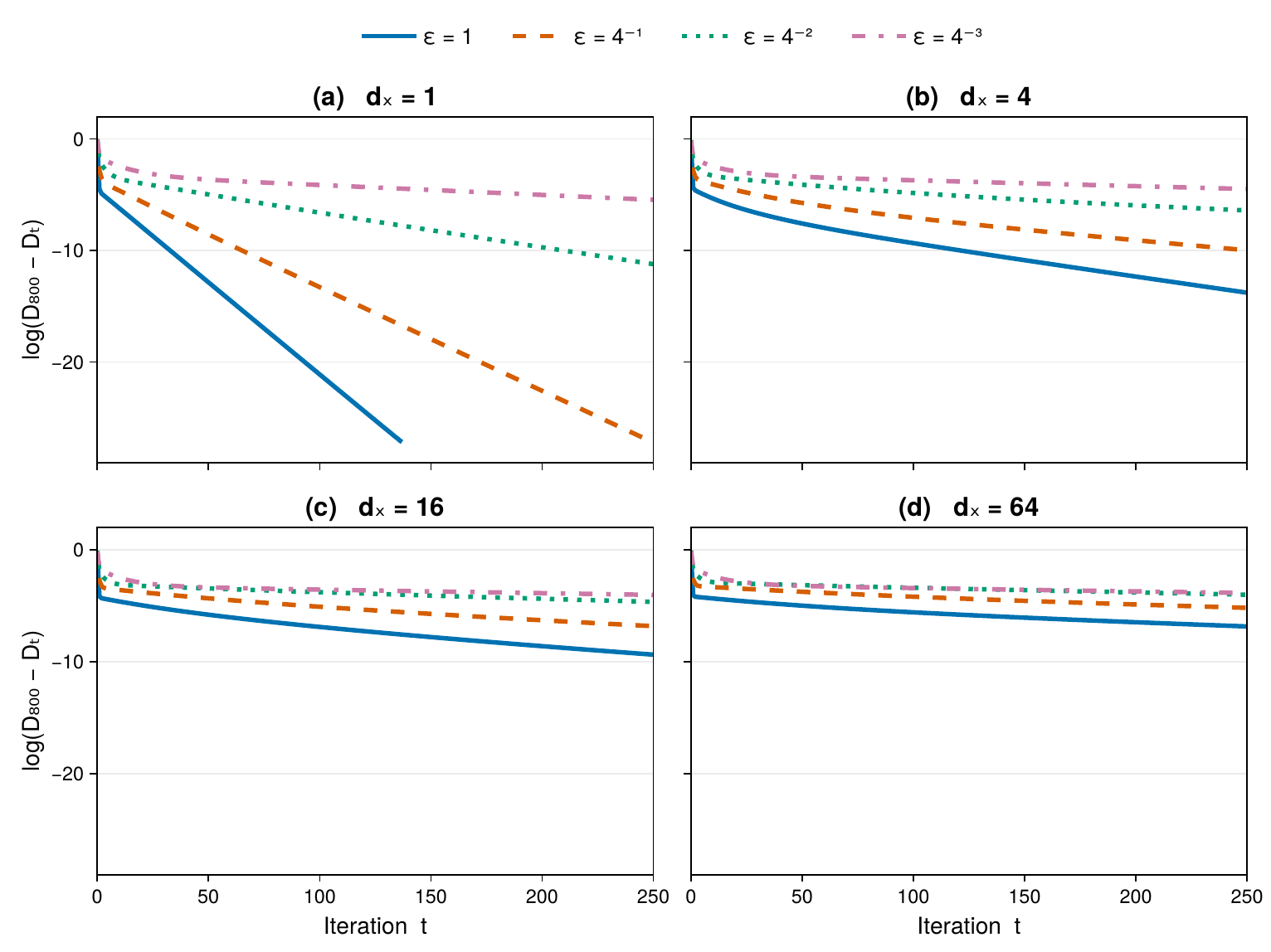}

    \includegraphics[width=0.78\textwidth]
        {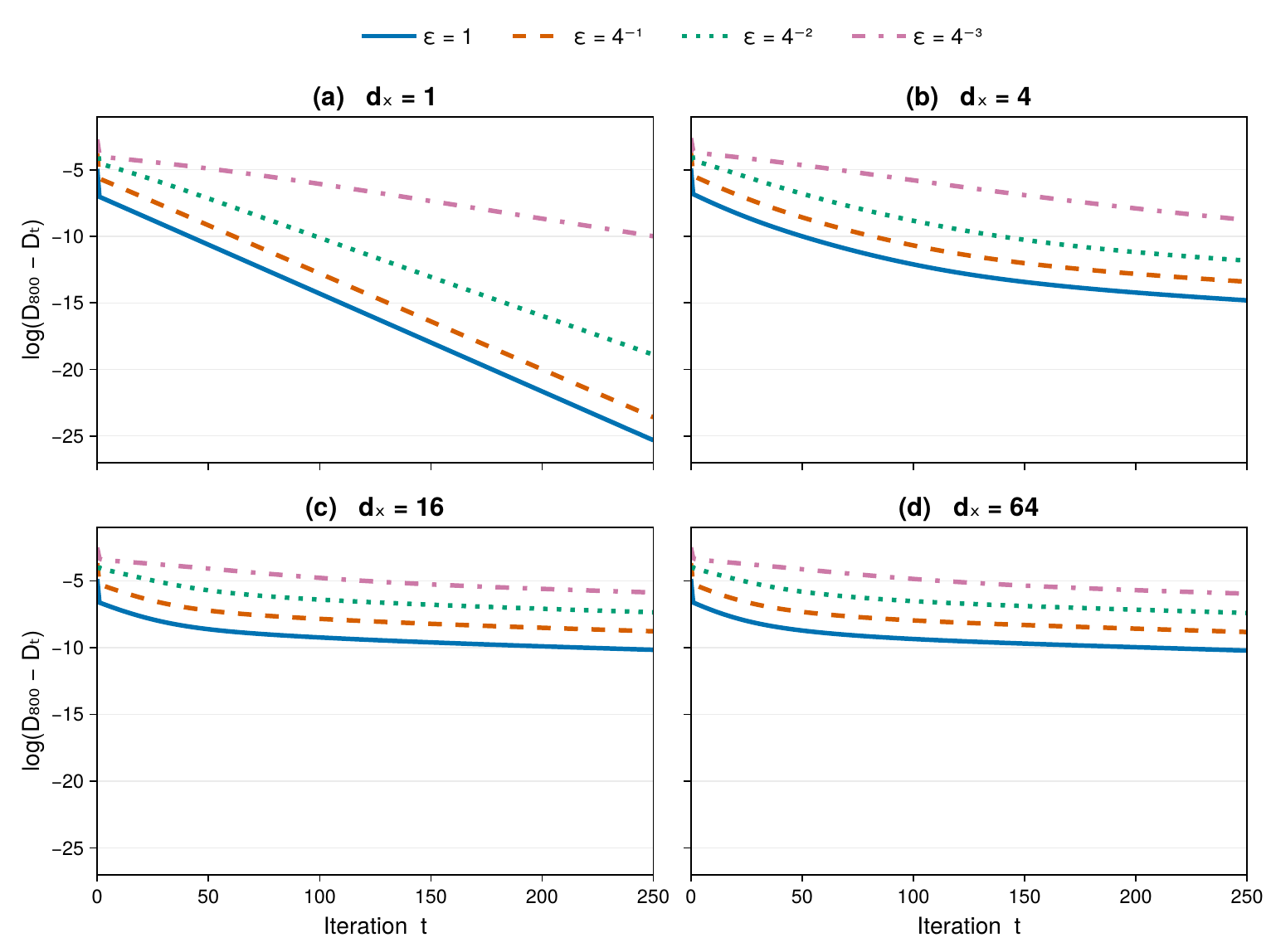}

    \caption{{\footnotesize
    \textbf{Empirical convergence of the modified Sinkhorn algorithm:
    dual objective gaps.}
    The top four panels correspond to the synthetic Gaussian experiment and
    the bottom four panels to the ANSUR II experiment. Within each group,
    the panels correspond to $d_x\in\{1,4,16,64\}$ and the curves correspond
    to $\varepsilon\in\{1,4^{-1},4^{-2},4^{-3}\}$.
    For each configuration, the algorithm is run for $800$ iterations and
    $D_{800}$ is used as a numerical proxy for the optimal dual value.
    The plots report $\log(D_{800}-D_t)$ for $0\leq t\leq250$.
    The covariates are normalized so that $M_x=1$, and the step size is
    $\eta=\varepsilon$.}
    }
    \label{fig:convergence-gap}
\end{figure}

The log gaps in Figure~\ref{fig:convergence-gap} are approximately affine over substantial
parts of the iteration range, which is the behavior expected under linear
convergence.  The linear pattern is especially clear when $d_x$ is small. Some curves in $d_x=1$ terminate early because the gap
$D_{800}-D_t$ reaches the machine precision.  For large $d_x$, and particularly for ANSUR II, there is a more
visible initial transient followed by an approximately linear tail.
For a fixed covariate dimension, larger values of $\varepsilon$ generally
produce steeper decreases, while the curves become flatter as
$\varepsilon$ decreases.  This is  consistent with the
exponential dependence on $1/\varepsilon$ in the theoretical contraction
bound.
The empirical convergence also slows as $d_x$ increases, which is consistent with the dependence of Theorem~4.1 on the smallest eigenvalue
$\underline{\lambda}$ of $\Sigma_X$.  Indeed, under the normalization $M_x=1$, $\underline{\lambda} \le 1/d_x$. 

% ----------------------------------------------------------------------
% Figure 1(b): empirical contraction factors
%
% Decrement the counter because the preceding caption has already
% advanced it to 1. The caption below advances it back to 1 and prints 1(b).
% ----------------------------------------------------------------------

\begin{figure}[p]
    \centering

    \includegraphics[width=0.78\textwidth]
        {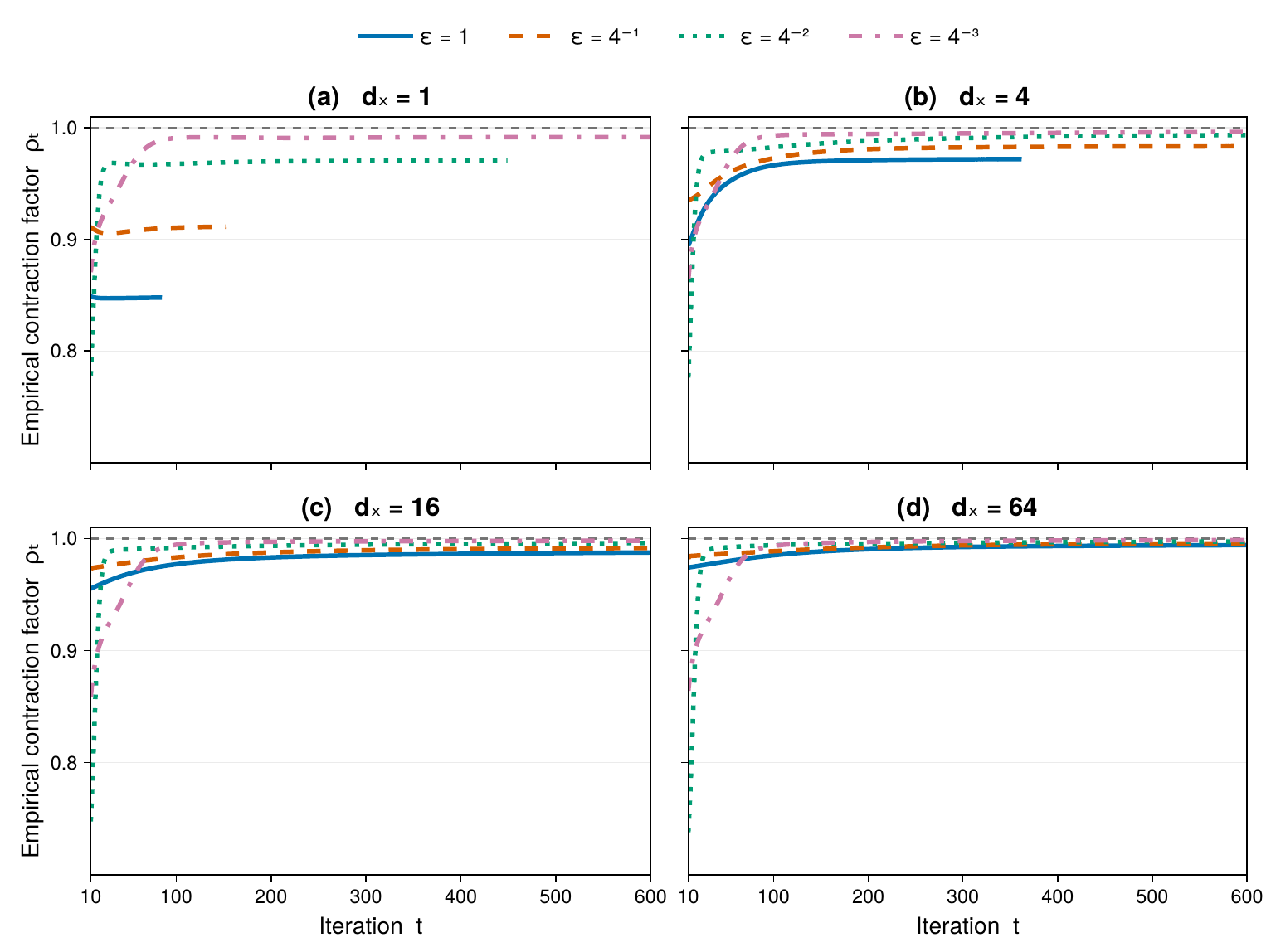}

    \includegraphics[width=0.78\textwidth]
        {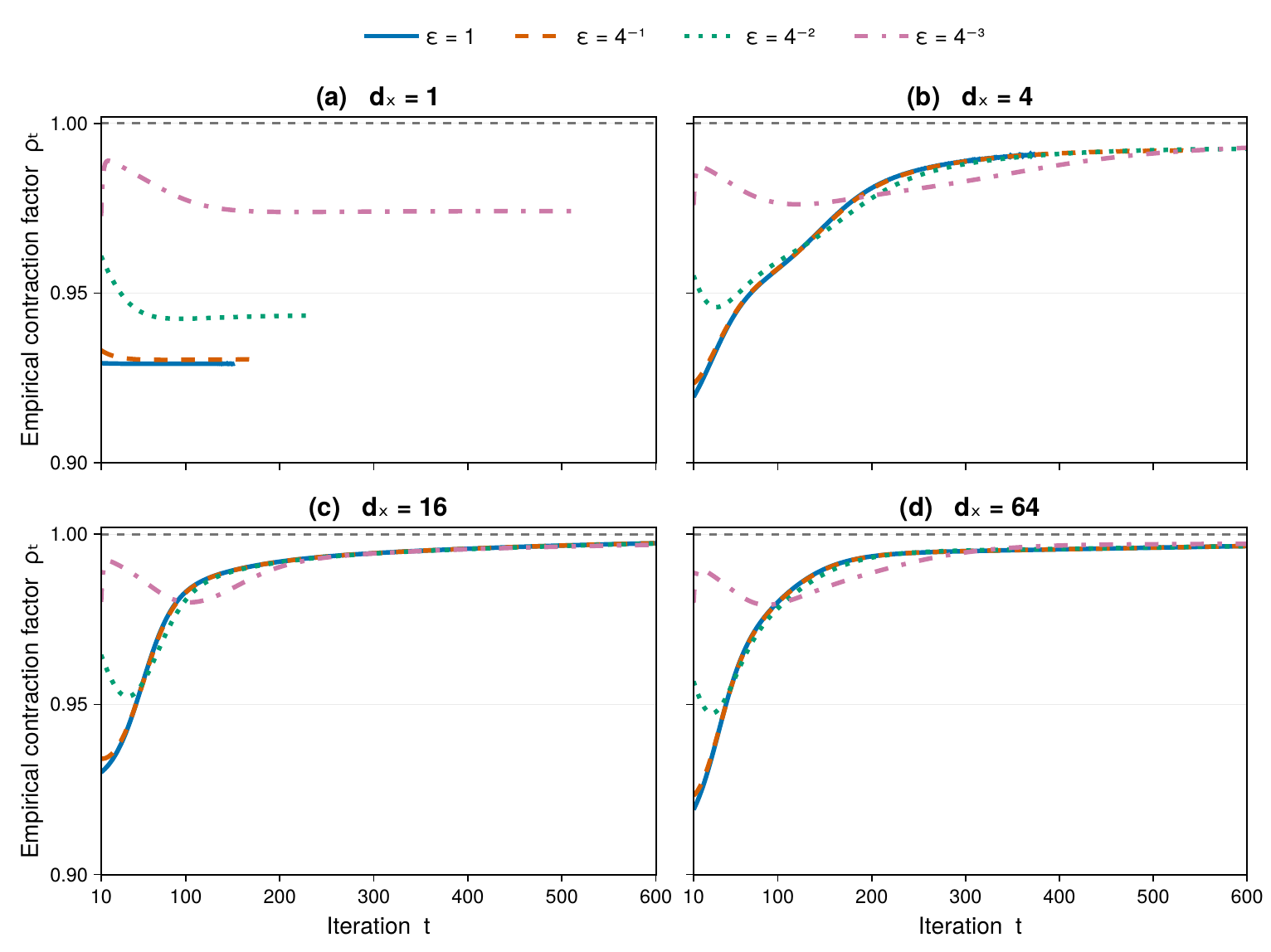}

    \caption{{\footnotesize
    \textbf{Empirical convergence of the modified Sinkhorn algorithm:
    contraction factors.}
    The top four panels correspond to the synthetic Gaussian experiment and
    the bottom four panels to the ANSUR II experiment. The empirical
    contraction factor is $\rho_t=(D_t-D_{t-1})/(D_{t-1}-D_{t-2})$ and is displayed for $10\leq t\leq600$.
    A value is reported only when both consecutive objective increments
    exceed
    $10^{-9}\max\{1,|D_t|,|D_{t-1}|,|D_{t-2}|\}$. The dataset configurations, values of
    $(d_x,\varepsilon)$, normalization, and step size are the same as in
    preceding figure.}
    }
    \label{fig:contraction-factor}
\end{figure}

Figure~\ref{fig:contraction-factor} reports the empirical contraction factors.
To avoid ratios dominated by roundoff error, we display $\rho_t$ only when
both $D_t-D_{t-1}$ and $D_{t-1}-D_{t-2}$ exceed
\[
  10^{-9}\max\{1,|D_t|,|D_{t-1}|,|D_{t-2}|\}.
\]
As such, some curves stop before iteration
600 because the consecutive objective increments fall below the
threshold.
For $d_x=1$, the
ratios converge rather rapidly, and the limiting ratio becomes
closer to one as $\varepsilon$ decreases.  For larger $d_x$, the ratios exhibit
a longer transient and stabilize much closer to one, corresponding to the
flatter log gaps in Figure~\ref{fig:convergence-gap}.  The same qualitative behavior appears
for both the synthetic and ANSUR II data, although the real-data experiment
shows more variation.  
Overall, Figures~\ref{fig:convergence-gap} and \ref{fig:contraction-factor}  provide empirical support for linear
convergence and for the predicted deterioration of the rate as
$\varepsilon$ decreases and $d_x$ increases.

\subsection{Stable step sizes}

Theorem \ref{thm: linear conv modified} uses the sufficient condition $0<\frac{\eta M_x^2}{\varepsilon}<2$
to guarantee monotonicity of the dual objective along the iterates.  Appendix \ref{sec: counterexample} shows that the
constant $2$ is sharp in general. That said, the example in Appendix \ref{sec: counterexample} pertains to a worst-case construction in $d_x=1$, so the admissible range of $\eta$, for which the dual objective is monotonically nondecreasing along the iterates, can be wider for given empirical data. 

For each pair $(d_x,\varepsilon)$, we call a step size \textit{stable} if
\[
  D_t\ge D_{t-1}
  -10^{-10}\max\{1,|D_t|,|D_{t-1}|\}
  \qquad\text{for every }t\le100.
\]
Starting from a stable step size ($\eta=\varepsilon / M_x^2$), we repeatedly double $\eta$ until this condition
fails and then perform 10 bisection steps to approximate the largest stable
value.  Figure~\ref{fig:stepsize-stability} reports the resulting value of
$\eta M_x^2/\varepsilon$.

\begin{figure}[t]
    \centering
    \includegraphics[width=0.90\textwidth]
        {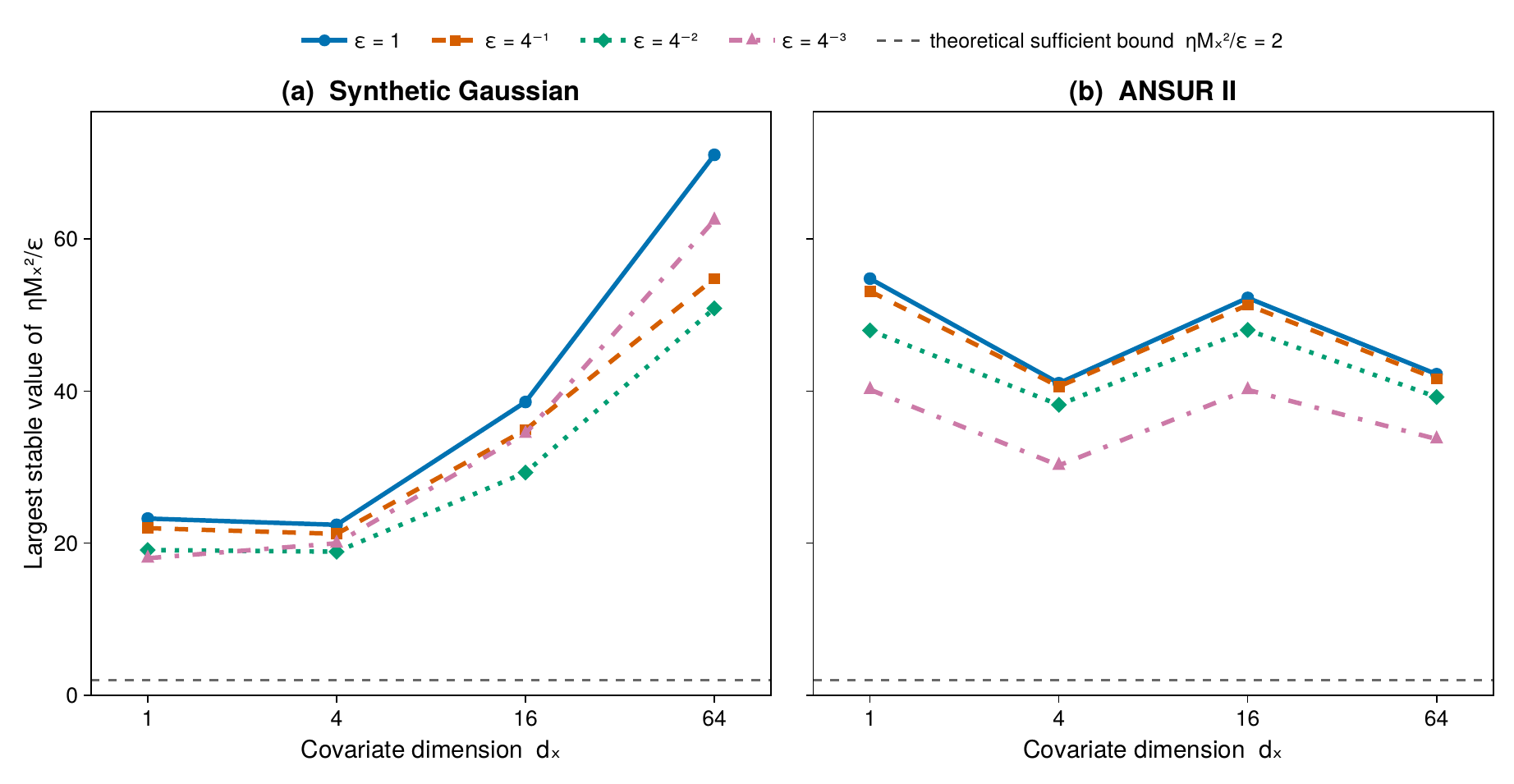}
    \caption{{\footnotesize 
    \textbf{Empirical step-size stability of the modified Sinkhorn algorithm.}
    The figure reports the largest value of
    $\eta M_x^2/\varepsilon$ classified as stable for
    $d_x\in\{1,4,16,64\}$ and
    $\varepsilon\in\{1,4^{-1},4^{-2},4^{-3}\}$.
    Panel (a) corresponds to the synthetic Gaussian experiment and panel (b)
    to the ANSUR II experiment, with the same data construction and
    preprocessing as in Figures~\ref{fig:convergence-gap} and \ref{fig:contraction-factor}.}
    %A step size is classified as stable if the dual objective is
    %numerically nondecreasing throughout the first $100$ iterations from the
    %zero initialization. For each $(d_x,\varepsilon)$, we first double $\eta$ until monotonicity fails, and then use 10 bisection steps to approximate the largest value for which monotonicity is preserved. 
    %The horizontal dashed line marks
    %$\eta M_x^2/\varepsilon=2$, the sufficient monotonicity condition in
    %Lemma~\ref{lem: monotonicity modified}.}
    }
    \label{fig:stepsize-stability}
\end{figure}

For each configuration, the empirical stability boundary is far
above $2$.  Across the two experiments, the estimated
boundaries range from roughly 18 to 72, with substantial variation across
$d_x$, $\varepsilon$, and the data set.  The
result shows that the theoretical condition on the step size can be quite conservative for a
given empirical problem.  However, the estimated boundaries concern numerical monotonicity only over the first 100 iterations from the zero initialization, and the largest monotone step size need not coincide with the most computationally efficient choice. Data-adaptive step-size rules and a systematic study of their computational performance are left for future work.

\FloatBarrier

\subsection{Small $\varepsilon$ regime}

To examine a regime in which the theoretical contraction factor is expected to
be especially close to one, we repeat the contraction-factor experiment with
$\varepsilon=10^{-3}$ and $\eta=\varepsilon$, running the algorithm for 3000
iterations.  Figure~\ref{fig:eps1e3} reports the resulting $\rho_t$ values.

\begin{figure}[t]
    \centering
    \includegraphics[width=0.90\textwidth]
        {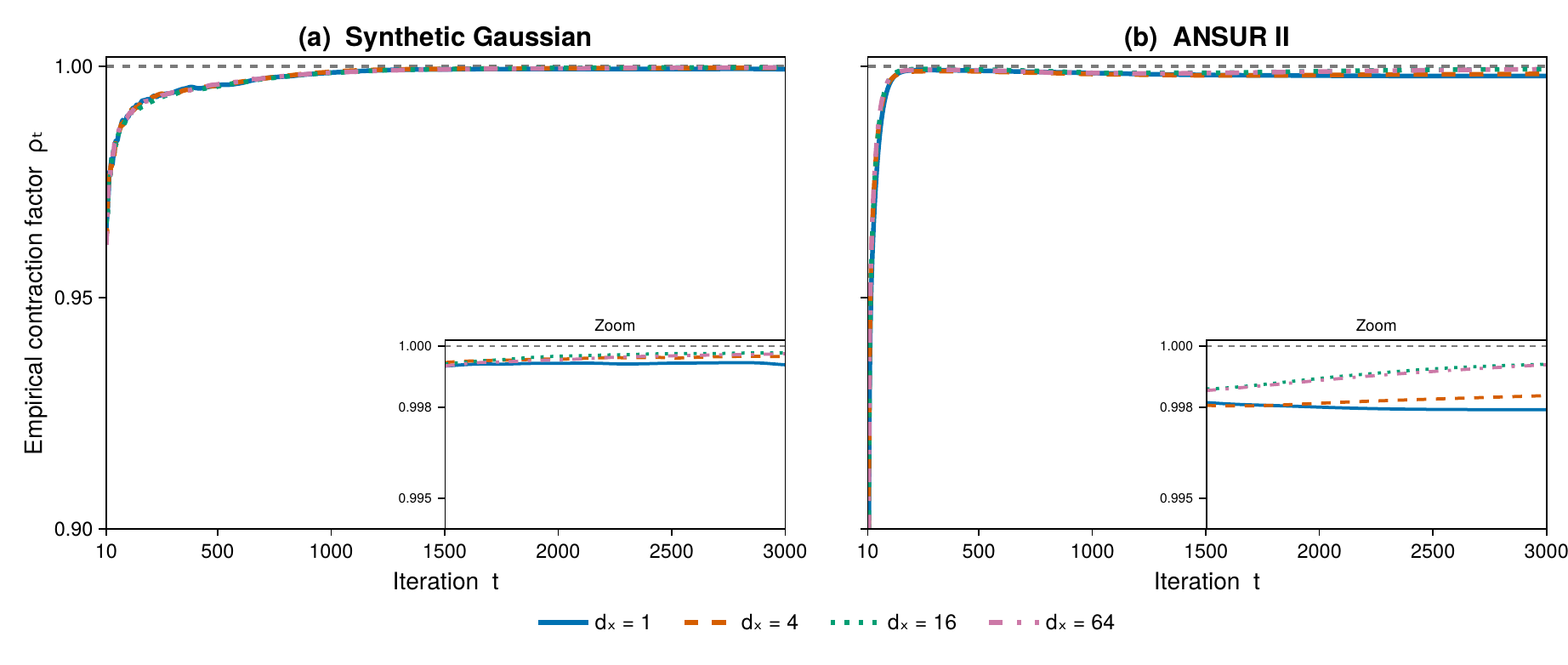}
    \caption{{\footnotesize
    \textbf{Empirical contraction factors for $\varepsilon=10^{-3}$.}
    The setting is the same as in Figure~\ref{fig:contraction-factor}, except that $\varepsilon=10^{-3}$ and the contraction factors are reported for $10\leq t\leq3000$.
    Panel (a) corresponds to the synthetic Gaussian experiment and panel (b) to the ANSUR II experiment. 
    }
    }
    \label{fig:eps1e3}
\end{figure}

For both the synthetic and ANSUR II data, the empirical contraction factors
cluster just below one after a short
transient.  Differences across the four covariate dimensions are small relative
to the dominant effect of the very small regularization level.  As such, the
algorithm remains stable with the theoretically suggested step size, but many
more iterations are needed to obtain a substantial reduction in the objective
gap.  This behavior is consistent with the deterioration of the contraction
bound as $\varepsilon \downarrow 0$.

\section{Proofs for Sections \ref{sec: duality} and \ref{sec: sinkhorn}}
\label{sec: proof sec 2 and 3}

In what follows, we will repeatedly use the following elementary inequalities
\begin{align}
&e^b-e^a \ge e^a (b-a), \ \forall a,b \in \R, \label{eq: exponential} \\
&e^b - e^a - e^a(b-a) \ge \frac{e^{-K}}{2} (b-a)^2, \ \forall a,b \ge -K,
\label{eq: exponential 2} \\
&|e^b-e^a| \le e^{K}|b-a|, \ \forall a,b \in [-K,K],
\label{eq: exponential 3}
\end{align}
where $K > 0$. 

\subsection{Proof of Proposition~\ref{prop: potential bound}}

The unique optimal coupling $\bar{\pi}$ has a density of the form
\begin{equation*}
\frac{d\bar{\pi}}{d(\mu \otimes \nu)} (u,x,y) = e^{p_u(x,y)} \quad \text{with} \quad p_u (x,y) := \frac{1}{\varepsilon} \big ( \barf(u) + \langle \barg(u),x \rangle + \barh(x,y) - c(u,y) \big ).
\end{equation*}
For each $u \in \calU$, a version of $\bar{\pi}_u$ is given by
\begin{equation*}
\frac{d\bar{\pi}_u}{d\nu}(x,y) = e^{p_u(x,y)}. 
\end{equation*}

We first find an upper bound on $\| \barf \|_\infty$.  By Jensen's inequality,
\begin{equation*}
\begin{split}
\barf(u) &= -\varepsilon \log \int e^{(\langle \barg(u),x\rangle+\barh(x,y)-c(u,y))/\varepsilon} \, d\nu (x,y)  \\
&\le - \int \big(\langle \barg(u),x\rangle + \barh(x,y) - c(u,y)\big) \, d\nu (x,y)\\
&= -\int \barh\,d\nu + \int c(u,\cdot) \,d\nu \le \|c\|_\infty, \ \forall u \in \calU, \\
\barh (x,y) &=-\varepsilon \log \int e^{(\barf(u)+\langle \barg(u),x\rangle-c(u,y))/\varepsilon} \, d\mu (u) \\
&\le - \int \big (\barf(u) + \langle \barg(u),x \rangle - c(u,y) \big) \,d\mu (u)\\
&= \int c(\cdot,y) \, d\mu \le \|c\|_\infty, \ \forall (x,y) \in \calX \times \calY,
\end{split}
\end{equation*}
where we used $\int \barh \, d\nu = \int \barf \, d\mu + \int \barh \, d\nu = \mathsf{D}(\mu,\nu) = \mathsf{T} (\mu,\nu) \ge 0$ by strong duality and the normalization that $\int \barf\, d\mu =0$ and $\int \barg \, d\mu=0$.
Observe that, for any $u \in \calU$, 
\begin{equation*}
\begin{split}
    0 &\le \KL{\bar{\pi}_u}{\nu} =\int \log\left(\frac{d\bar{\pi}_u}{d\nu}\right) d\bar{\pi}_u  \\
    &= \frac{1}{\varepsilon} \int \big( \barf(u) + \langle \barg(u), x \rangle + \barh(x,y) - c(u,y) \big) \,d\bar{\pi}_u (x,y)
\end{split}
\end{equation*}
so that, as $\int x \, d\bar{\pi}_u(x,y) = 0$, we have
\begin{equation*}
    \barf(u) \ge \int c \,d\bar{\pi}_u- \int \barh\, d\bar{\pi}_u \ge - \int \barh\, d\bar{\pi}_u\ge-\|c\|_\infty.
\end{equation*}
Conclude that $\|\barf\|_{\infty} \le \|c\|_\infty$.

Next, we establish an upper bound on $\| \barg \|_\infty$. 
For any $v \in \mathbb{S}^{d_x-1} := \{ x \in \R^{d_x} : \| x \| = 1 \}$, define a probability measure $q^{(v)}$ on $\calX \times \calY$ by
\begin{equation*}
dq^{(v)}(x,y) = e^{\hat{p}_v(x,y)}\, d\nu(x,y) \quad \text{with} \quad \hat{p}_v(x,y) := \log \big(1 + \delta \langle \Sigma_X^{-1}v, x \rangle \big),
\end{equation*}
where we choose $\delta = 1/(2\|\Sigma_X^{-1}\|_{\op}M_x)$, so that $1 + \delta \langle \Sigma_X^{-1}v, x \rangle \in [1/2, 3/2]$ for all $(x,v)  \in \calX \times \mathbb{S}^{d_x-1}$. 
As $\E[X] = 0$, $q^{(v)}$ is a probability measure.
Using inequality (\ref{eq: exponential}), we obtain
\begin{equation*}
0 = \int \big( e^{p_u} - e^{\hat{p}_v} \big) \,d\nu \ge \int (p_u - \hat{p}_v)  e^{\hat{p}_v} \, d\nu = \int_{\calX \times \calY} ( p_u - \hat{p}_v ) \,dq^{(v)}.
\end{equation*}
Substitute the definition of $p_u$ into the inequality to get
\begin{equation*}
0 \ge \int_{\mathcal{X} \times \mathcal{Y}} \left( \frac{1}{\varepsilon} \big( \barf(u) + \langle \barg(u), x \rangle + \barh(x,y) - c(u,y) \big) - \hat{p}_v(x,y) \right) dq^{(v)}(x,y).
\end{equation*}
Since $\int_{\calX \times \calY} x \,dq^{(v)} (x,y)= \delta v$, we have
\begin{equation*}
\begin{split}
\delta \langle \barg(u), v \rangle &\le \int_{\calX \times \calY} \big( c(u,y) - \barf(u) - \barh(x,y) + \varepsilon \hat{p}_v(x,y) \big) \,dq^{(v)}(x,y) \\
&\le \|c\|_\infty +\|\barf\|_\infty - \int \barh \, dq^{(v)}+ \varepsilon \log \left(\frac{3}{2} \right).
\end{split}
\end{equation*}

We shall find an upper bound on $-\int \barh \,dq^{(v)}$. 
Recall $\barh \le \| c \|_\infty$. 
Using the fact that $\int \barh \, d\nu \ge 0$, we have
\begin{equation*}
\begin{split}
-\int \barh \,dq^{(v)} &= -\int \barh \, d\nu - \int \barh \delta \langle \Sigma_X^{-1}v, x \rangle\, d\nu \\ 
&\le \int (\|c\|_{\infty} - \barh) \delta \langle \Sigma_X^{-1}v, x \rangle\, d\nu - \int \|c\|_\infty \delta \langle \Sigma_X^{-1}v, x \rangle\,d\nu \\
&= \int (\|c\|_{\infty} - \barh) \delta \langle \Sigma_X^{-1}v, x \rangle\, d\nu \\
&\le \frac{1}{2} \int (\|c\|_{\infty} - \barh) \, d\nu \le \frac{1}{2}\|c\|_\infty.
\end{split}
\end{equation*}
Now, choosing $v = \barg(u)/ \|\barg(u)\|$ yields
\begin{equation*}
\begin{split}
\|\barg\|_{\infty} &\le \frac{1}{\delta} \left( \frac{3}{2}\|c\|_\infty + \| \barf \|_\infty + \varepsilon \log\left(\frac{3}{2}\right) \right) \\
&\le 2\|\Sigma_X^{-1}\|_{\op} M_x \left( \frac{5}{2}\|c\|_\infty + \varepsilon \log\left(\frac{3}{2}\right) \right).
\end{split}
\end{equation*}

Finally, a lower bound on $\barh$ can be obtained as
\begin{equation*}
\barh(x,y)
\ge -\varepsilon \log \int e^{(\|\barf\|_\infty + \|\barg\|_\infty M_x)/\varepsilon} \, d\mu =-\|\barf\|_\infty - \|\barg\|_\infty M_x. 
\end{equation*}
Combining the upper bound on $h$, we obtain the conclusion of the proposition. \qed

\subsection{Proof of Proposition~\ref{prop: PL}}
Let $F(u,x,y) := f(u) + \langle g(u),x \rangle + h(x,y)-c(u,y)$ and $\bar{F}(u,x,y) :=\barf(u) + \langle \barg(u),x \rangle + \barh(x,y)-c(u,y)$. 
Observe that, as $\int f \, d\mu=0, \int g \, d\mu = 0$, and $\int x \, d\nu =0$, 
\begin{equation*}
\begin{split}
\int (\bar{F}-F)^2 \, d(\mu \otimes \nu) &= \big \| \barf - f \big \|_{L^2(\mu)}^2 + \big \| \langle \barg - g,x \rangle \big \|_{L^2(\mu \otimes \nu)}^2 + \big \| \barh  - h \big \|_{L^2(\nu)}^2 \\
&\ge \big \| \barf - f \big \|_{L^2(\mu)}^2 + \underline{\lambda} \big \|  \barg - g \big \|_{L^2(\mu)}^2 + \big \| \barh  - h \big \|_{L^2(\nu)}^2.
\end{split}
\end{equation*}
Using inequality (\ref{eq: exponential 2}) with $a = F/\varepsilon$ and $b = \bar{F} /\varepsilon$, we have 
\begin{equation*}
\begin{split}
&D (f,g,h) - D(\barf,\barg,\barh) \\
&= \int (F - \bar{F}) \, d(\mu \otimes \nu) + \varepsilon \int \big(e^{\bar{F}/\varepsilon} - e^{F/\varepsilon} \big)\,  d(\mu \otimes \nu) \\
&\ge \int (h - \barh) \, d\nu + \int (\bar{F}-F)e^{F/\varepsilon} \, d(\mu \otimes \nu)  + \frac{e^{-K/\varepsilon}}{2\varepsilon} \int (\bar{F}-F)^2 \, d(\mu \otimes \nu) \\
&\ge  \left \langle \barf-f, e^{f/\varepsilon} \int e^{(\langle g,x\rangle+h-c)/\varepsilon} \, d\nu \right \rangle_{L^2(\mu)}  \\
&\quad + \left \langle \barg-g, e^{f/\varepsilon} \int x e^{(\langle g,x\rangle+h-c)/\varepsilon} \, d\nu \right \rangle_{L^2(\mu)} \\
&\quad + \left \langle \barh-h, -1+e^{h/\varepsilon} \int  e^{(f+\langle g,x\rangle-c)/\varepsilon} \, d\mu \right \rangle_{L^2(\nu)} \\
&\quad + \frac{(1 \wedge \underline{\lambda})e^{-K/\varepsilon}}{2\varepsilon} \big (\big \| \barf-f \big \|_{L^2(\mu)}^2 + \big \| \barg-g \big \|_{L^2(\mu)}^2 + \big \| \barh-h \big \|_{L^2(\nu)}^2 \big) \\
&=\left \langle \barf-f, \ell_f(f,g,h) \right  \rangle_{L^2(\mu)} + \left \langle \barg-g, \ell_g(f,g,h) \right \rangle_{L^2(\mu)}  + \left \langle \barh-h, \ell_h(f,g,h) \right \rangle_{L^2(\nu)} \\
&\quad + \frac{(1 \wedge \underline{\lambda})e^{-K/\varepsilon}}{2\varepsilon} \big (\big \| \barf-f \big \|_{L^2(\mu)}^2 + \big \| \barg-g \big \|_{L^2(\mu)}^2 + \big \| \barh-h \big \|_{L^2(\nu)}^2 \big),
\end{split}
\end{equation*}
where we used the fact that, as $\int \barf \, d\mu = \int f \, d\mu = 0$, $\langle \barf-f, 1 \rangle_{L^2(\mu)} = 0$. 
The desired inequality follows from the elementary inequality $2ab \ge - \kappa a^2-\kappa^{-1}b^2$ for $a,b \in \R$ and $\kappa > 0$. To see this, we set $\kappa =\frac{(1 \wedge \underline{\lambda})e^{-K/\varepsilon}}{\varepsilon}$ and observe that
\begin{equation*}
\begin{split}
&\left \langle \barf-f, \ell_f(f,g,h) \right  \rangle_{L^2(\mu)} + \left \langle \barg-g, \ell_g(f,g,h) \right \rangle_{L^2(\mu)}  + \left \langle \barh-h, \ell_h(f,g,h) \right \rangle_{L^2(\nu)} \\
&\ge - \frac{\kappa}{2} \big(\big \| \barf-f \big \|_{L^2(\mu)}^2 + \big \| \barg-g \big \|_{L^2(\mu)}^2 + \big \| \barh-h \big \|_{L^2(\nu)}^2\big) \\
&\quad - \frac{1}{2\kappa} \big (\big \| \ell_f(f,g,h) \|_{L^2(\mu)}^2 + \| \ell_g(f,g,h) \|_{L^2(\mu)}^2 + \| \ell_h(f,g,h) \|_{L^2(\nu)}^2 \big).
\end{split}
\end{equation*}
This completes the proof.
\qed

\subsection{Proof of Proposition~\ref{prop: sinkhorn potential}}

Proposition~\ref{prop: sinkhorn potential} follows by combining Lemmas \ref{lem: bound on f and h} and \ref{lem: bound on g} below. Before that, we first prove the following lemma concerning monotonicity of the dual objective along the Sinkhorn iterates. 
\begin{lemma}[Monotonicity lemma]
\label{lem: monotonicity}
For $t \in \N_0$, we have 
\begin{equation*}
\begin{split}
D (f^{t},g^{t},h^{t}) &\le D  (f^{t},\tildeg^{t+1},h^{t}) \le D  (\tildef^{t+1},\tildeg^{t+1},h^{t}) \\
&\le D(\tildef^{t+1},\tildeg^{t+1},\tildeh^{t+1}) = D(f^{t+1},g^{t+1},h^{t+1}).
\end{split}
\end{equation*}
\end{lemma}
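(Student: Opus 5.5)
The plan is to verify each of the four relations separately, using the characterization of the Sinkhorn iterates as blockwise maximizers in Remark~\ref{rem: sinkhorn}~(ii) together with the affine-shift invariance of $D$. In every block the dual objective is concave. Its first variation is given by the gradients $\ell_f,\ell_g,\ell_h$ in (\ref{eq: gradient}). So it suffices to check that each update is a pointwise stationary point of the corresponding concave block problem.

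\textbf{The $g$-step.} Fix $(f^t,h^t)$. The map
\[
g \mapsto D(f^t,g,h^t) = \text{const} - \varepsilon \int e^{f^t(u)/\varepsilon}\Psi_u(g(u))\,d\mu(u), \qquad \Psi_u(w) := \int e^{(\langle w,x\rangle + h^t(x,y)-c(u,y))/\varepsilon}\,d\nu(x,y),
\]
decouples across $u$. For each $u$, $\Psi_u$ is convex and smooth in $w\in\R^{d_x}$. Equation (\ref{eq: update g}) says exactly that $\nabla\Psi_u(\tildeg^{t+1}(u))=0$, so $\tildeg^{t+1}(u)$ minimizes $\Psi_u$. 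Hence $\Psi_u(\tildeg^{t+1}(u))\le \Psi_u(g^t(u))$ for every $u$. Integrating against $e^{f^t/\varepsilon}\,d\mu$ gives the first inequality. Finiteness of all integrals follows from compactness and the continuity of $\tildeg^{t+1}$ stated in Remark~\ref{rem: sinkhorn}~(i).

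\textbf{The $f$-step and $h$-step.} Write $A(u) := \int e^{(\langle \tildeg^{t+1}(u),x\rangle + h^t - c)/\varepsilon}\,d\nu$. Then
\[
D(f,\tildeg^{t+1},h^t) = \text{const} + \int \bigl(f - \varepsilon e^{f/\varepsilon}A\bigr)\,d\mu .
\]
The scalar function $s\mapsto s-\varepsilon e^{s/\varepsilon}A(u)$ is maximized at $s=-\varepsilon\log A(u)=\tildef^{t+1}(u)$. Applying this pointwise in $u$ gives the second inequality. The third inequality is the identical argument in the $h$-variable, pointwise in $(x,y)$, with $B(x,y) := \int e^{(\tildef^{t+1}+\langle\tildeg^{t+1},x\rangle - c)/\varepsilon}\,d\mu$.

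\textbf{The normalization step.} The final equality comes from checking that $(f^{t+1},g^{t+1},h^{t+1})$ equals $(\tildef^{t+1}+a,\ \tildeg^{t+1}+v,\ \tildeh^{t+1}-a-\langle v,x\rangle)$ with $a=-\int\tildef^{t+1}\,d\mu$ and $v=-\int\tildeg^{t+1}\,d\mu$. Under this shift the exponent in $\iota$ is unchanged. The linear terms change by $a - \int (a+\langle v,x\rangle)\,d\nu(x,y) = -\langle v,\int x\,d\nu\rangle = 0$, because $\E[X]=0$.

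The only genuinely nontrivial point is that $\tildeg^{t+1}$ exists and is continuous, so that it is an admissible competitor and the $u$-wise minimization argument is legitimate. This is already supplied by Remark~\ref{rem: sinkhorn}~(i), so I expect no real obstacle; the lemma is essentially bookkeeping.
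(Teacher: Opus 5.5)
Your proof is correct and follows essentially the same route as the paper. The paper's $g$-step applies $e^b-e^a\ge e^a(b-a)$ together with (\ref{eq: update g}) to kill the linear term, which is exactly the first-order convexity argument behind your ``stationary point of the convex $\Psi_u$ is a minimizer.'' Its $f$- and $h$-steps write the gains as $\varepsilon\int\bigl(e^{s}-1-s\bigr)\,d\mu\ge 0$ with $s=(f^t-\tildef^{t+1})/\varepsilon$ (and analogously in $h$ with $d\nu$), which is your pointwise maximization written out. The only difference is that you explicitly verify the final equality via affine-shift invariance and $\E[X]=0$, which the paper leaves implicit (cf.\ Remark~\ref{rem: sinkhorn}~(iii)).
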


\begin{proof}
The lemma directly follows from the definition of the Sinkhorn iterates; cf. equation (\ref{eq: interpretation}). For the sake of completeness, we provide an explicit proof.
Observe that
    \begin{equation*}
    D(f^{t}, \tildeg^{t+1}, h^{t}) - D(f^{t}, g^{t}, h^{t}) =\varepsilon \left ( \iota(f^{t}, g^{t}, h^{t}) - \iota(f^{t}, \tildeg^{t+1}, h^{t}) \right).
    \end{equation*}
    An application of inequality (\ref{eq: exponential}) yields
    \begin{equation*}
    e^{\langle g^{t}, x \rangle / \varepsilon} - e^{\langle \tildeg^{t+1}, x \rangle / \varepsilon} \ge e^{\langle \tildeg^{t+1}, x \rangle/ \varepsilon} \frac{\langle g^{t} - \tildeg^{t+1}, x \rangle}{\varepsilon}.
    \end{equation*}
    Multiply both sides by  $e^{(f^{t}+h^{t}-c)/\varepsilon}$ and integrate over $\mu \otimes \nu$ to obtain
    \begin{equation*}
    \begin{split}
   &\varepsilon \left ( \iota(f^{t}, g^{t}, h^{t}) - \iota(f^{t}, \tildeg^{t+1}, h^{t}) \right) \\
   &\ge \int_{\calU} \left\langle g^{t} - \tildeg^{t+1}, e^{f^{t}/\varepsilon} \underbrace{\int x \exp\left(\frac{ \langle \tildeg^{t+1}, x \rangle + h^{t} -c}{\varepsilon}\right) \,d\nu}_{=0} \right\rangle \,d\mu.
   \end{split}
    \end{equation*}
    This establishes $D(f^{t}, \tildeg^{t+1}, h^{t}) \ge D(f^{t}, g^{t}, h^{t})$. 

    Next, since $\int e^{(\tildef^{t+1} (u)+ \langle \tildeg^{t+1}(u),x\rangle + h^{t} - c)/\varepsilon} \, d\nu = 1$ for all $u \in \calU$, we have 
    \begin{equation*}
    \iota(\tildef^{t+1}, \tildeg^{t+1}, h^{t}) =1.
    \end{equation*}
    On the other hand, 
    \begin{equation*}
    \iota(f^{t}, \tildeg^{t+1}, h^{t}) = \int e^{(f^{t} -\tildef^{t+1})/\varepsilon} \, d\mu,
    \end{equation*}
    so that we have
    \begin{multline*}
        D(\tildef^{t+1}, \tildeg^{t+1}, h^{t}) - D(f^{t}, \tildeg^{t+1}, h^{t}) = \varepsilon \iota(f^{t}, \tildeg^{t+1}, h^{t}) - \varepsilon + \int (\tildef^{t+1} - f^{t}) \,d\mu \\
          = \varepsilon \int \left( e^{(f^{t}-\tildef^{t+1})/\varepsilon} - 1-(f^{t}-\tildef^{t+1})/\varepsilon \right)\, d\mu
        \ge 0.
\end{multline*}
    Likewise, we have $\iota (\tildef^{t+1}, \tildeg^{t+1}, \tildeh^{t+1}) = 1$ and
    \begin{equation*}
    \begin{split}
    &D(\tildef^{t+1}, \tildeg^{t+1}, \tildeh^{t+1}) - D(\tildef^{t+1}, \tildeg^{t+1}, h^{t}) \\
    &= \varepsilon \int \left( e^{(h^{t}-\tildeh^{t+1})/\varepsilon} - 1 - (h^{t}-\tildeh^{t+1})/\varepsilon  \right)\, d\nu \ge 0,
    \end{split}
    \end{equation*}
    completing the proof. 
\end{proof}

Recall the notations $M_x, L_c$, and $D_0$ defined in (\ref{eq: initial parameter}) and (\ref{eq: parameter}). Observe that 
\begin{equation*}
|c(u,y) - c(u',y)| \le L_c \|u-u'\|, \ \forall u,u' \in \calU, y \in \calY. 
\end{equation*}
We establish bounds on $\| \tildef^{t+1} \|_{\infty} \vee \| f^{t+1} \|_\infty$ and $\| \tildeh^{t+1} \|_{\infty} \vee \| h^{t+1} \|_\infty$. 
Observe that weak duality for entropic VQR implies that 
\begin{equation*}
D(f,g,h) \le \int c \, d(\mu \otimes \nu) 
\end{equation*}
for any $(f,g,h) \in L^1(\mu) \times L^1(\mu;\R^{d_x}) \times L^1(\nu)$.

\begin{lemma}
\label{lem: bound on f and h}
For $t \in \N_0$,
\begin{equation*}
\begin{split}
\| \tildef^{t+1} \|_{\infty} \vee \| f^{t+1} \|_\infty &\le L_c \diam (\calU) + \| c \|_\infty + D_0^- + \|h^0\|_\infty= K_f, \quad \text{and} \\
\| \tildeh^{t+1} \|_\infty  \vee \| h^{t+1} \|_\infty &\le \| c \|_\infty + K_f + \big( \big \| \tildeg^{t+1} \big \|_\infty \vee \big \| g^{t+1} \big \|_\infty\big) M_x.
\end{split}
\end{equation*}
\end{lemma}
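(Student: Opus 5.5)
The plan is to prove the bound on $\tildef^{t+1}$ first, by splitting it into an oscillation part and a mean part. The bounds on $f^{t+1}$, $\tildeh^{t+1}$ and $h^{t+1}$ then follow from the defining log-integral formulas.

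\emph{Oscillation of $\tildef^{t+1}$.} For fixed $u$ the map $w \mapsto \int e^{(\langle w,x\rangle + h^t(x,y)-c(u,y))/\varepsilon}\,d\nu(x,y)$ is convex and smooth on $\R^{d_x}$. Its gradient vanishes exactly at $w=\tildeg^{t+1}(u)$ by (\ref{eq: update g}), so $\tildeg^{t+1}(u)$ is a minimizer. Hence
\[
\tildef^{t+1}(u)=\sup_{w\in\R^{d_x}}\Big\{-\varepsilon\log\int e^{(\langle w,x\rangle+h^t(x,y)-c(u,y))/\varepsilon}\,d\nu(x,y)\Big\}.
\]
For each fixed $w$, the bound $|c(u,y)-c(u',y)|\le L_c\|u-u'\|$ makes the bracket $L_c$-Lipschitz in $u$. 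A supremum of $L_c$-Lipschitz functions is $L_c$-Lipschitz, so $\sup\tildef^{t+1}-\inf\tildef^{t+1}\le L_c\diam(\calU)$. Since $f^{t+1}=\tildef^{t+1}-\int\tildef^{t+1}\,d\mu$ has mean zero, this gives $\|f^{t+1}\|_\infty\le L_c\diam(\calU)\le K_f$. It also gives $\|\tildef^{t+1}\|_\infty\le L_c\diam(\calU)+|\int\tildef^{t+1}\,d\mu|$.

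\emph{Mean of $\tildef^{t+1}$ (main step).} The integral $\int e^{(\tildef^{t+1}(u)+\langle\tildeg^{t+1}(u),x\rangle+h^t-c)/\varepsilon}\,d\nu=1$ for every $u$, so $\iota(\tildef^{t+1},\tildeg^{t+1},h^t)=1$. Therefore
\[
D(\tildef^{t+1},\tildeg^{t+1},h^t)=\int\tildef^{t+1}\,d\mu+\int h^t\,d\nu .
\]
Lemma~\ref{lem: monotonicity} bounds this value below by $D_t:=D(f^t,g^t,h^t)\ge D_0$, and weak duality bounds it above by $\int c\,d(\mu\otimes\nu)\le\|c\|_\infty$. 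It remains to control $\int h^t\,d\nu$, and I split into two cases.
\begin{itemize}
\item[(a)] If $t=0$, then $|\int h^0\,d\nu|\le\|h^0\|_\infty$, which gives $-D_0^- - \|h^0\|_\infty\le\int\tildef^{1}\,d\mu\le\|c\|_\infty+\|h^0\|_\infty$.
\item[(b)] If $t\ge1$, then $(f^t,g^t,h^t)$ is an affine shift of $(\tildef^t,\tildeg^t,\tildeh^t)$, so $\iota(f^t,g^t,h^t)=1$. Together with $\int f^t\,d\mu=0$ this gives $\int h^t\,d\nu=D_t$. Hence $\int\tildef^{t+1}\,d\mu=D(\tildef^{t+1},\tildeg^{t+1},h^t)-D_t$, which lies in $[0,\|c\|_\infty-D_0]$.
\end{itemize}
In both cases $|\int\tildef^{t+1}\,d\mu|\le\|c\|_\infty+D_0^-+\|h^0\|_\infty$, and this yields the bound $K_f$ for $\tildef^{t+1}$. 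I expect this mean control to be the delicate point, since it is the only place where monotonicity, weak duality and the normalization are all needed.

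\emph{Bounds on $h$.} By construction $\tildeh^{t+1}(x,y)=-\varepsilon\log\int e^{(\tildef^{t+1}+\langle\tildeg^{t+1},x\rangle-c)/\varepsilon}\,d\mu$. Since the exponent lies in $[-K_f-\|\tildeg^{t+1}\|_\infty M_x-\|c\|_\infty,\,K_f+\|\tildeg^{t+1}\|_\infty M_x]$, we get $|\tildeh^{t+1}|\le\|c\|_\infty+K_f+\|\tildeg^{t+1}\|_\infty M_x$. The normalization shows that $h^{t+1}$ satisfies the same identity with $(f^{t+1},g^{t+1})$ in place of $(\tildef^{t+1},\tildeg^{t+1})$, because the affine shift cancels inside the exponent. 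The identical argument, using $\|f^{t+1}\|_\infty\le K_f$, then gives $|h^{t+1}|\le\|c\|_\infty+K_f+\|g^{t+1}\|_\infty M_x$. This completes the claimed bounds.
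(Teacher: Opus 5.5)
Your proposal is correct and has the same structure as the paper's proof: an $L_c$-Lipschitz bound for $\tildef^{t+1}$, then control of $\int\tildef^{t+1}\,d\mu$ via Lemma~\ref{lem: monotonicity} and weak duality with the same split into $t=0$ and $t\ge1$, then the $h$-bounds from the log-integral formula (with the affine shift cancelling for $h^{t+1}$). The one variation is the Lipschitz step: you write $\tildef^{t+1}(u)$ as a supremum over $w$ of functions that are $L_c$-Lipschitz in $u$, using convexity to turn (\ref{eq: update g}) into optimality of $\tildeg^{t+1}(u)$, whereas the paper applies Jensen's inequality under the tilted measure $\rho_{u'}$ with $\int x\,d\rho_{u'}=0$; both rest on the same first-order condition and are equivalent.
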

\begin{proof}
Pick any $u, u' \in \calU$. 
Let 
\begin{equation*}
A(u') := \int \exp\left(\frac{ \langle \tildeg^{t+1}(u'), x \rangle + h^{t}(x,y) -c(u',y)}{\varepsilon}\right) \,d\nu(x,y) = e^{-\tildef^{t+1}(u')/\varepsilon}.
\end{equation*}
Define a probability measure $\rho_{u'}$ on $\calX \times \calY$ by
\begin{equation*}
d\rho_{u'}(x,y) := \frac{1}{A(u')} \exp\left(\frac{\langle \tildeg^{t+1}(u'), x \rangle +h^{t}(x,y) -c(u',y)}{\varepsilon}\right) \,d\nu(x,y).
\end{equation*}
We observe that
\begin{equation*}
\begin{split}
    &-\frac{\tildef^{t+1}(u)}{\varepsilon} + \frac{\tildef^{t+1}(u')}{\varepsilon} \\
    &= \log \int \exp\left(\frac{\langle \tildeg^{t+1}(u), x \rangle + h^{t}(x,y)-c(u,y)}{\varepsilon}\right) d\nu(x,y) - \log A(u') \\
    &= \log \int \exp\left( \frac{c(u',y) -c(u,y) + \langle \tildeg^{t+1}(u) - \tildeg^{t+1}(u'), x \rangle}{\varepsilon} \right) \,d\rho_{u'}(x,y) \\
    &\ge \frac{1}{\varepsilon}\int \left( c(u',y) - c(u,y) + \langle \tildeg^{t+1}(u) - \tildeg^{t+1}(u'), x \rangle \right) \,d\rho_{u'}(x,y)
\end{split}
\end{equation*}
by Jensen's inequality.
By the definition of $\tildeg^{t+1}$,
$
\int_{\calX \times \calY} x \, d\rho_{u'}(x,y) = 0,
$
which yields that
\begin{equation*}
\begin{split}
    \tildef^{t+1}(u') - \tildef^{t+1}(u) &\ge \int (c(u',y) - c(u,y)) \,d\rho_{u'}(x,y) \\
    &\ge -L_c\|u-u'\|.
\end{split}
\end{equation*}
Interchanging the roles of $u$ and $u'$, we obtain
\begin{equation*}
|\tildef^{t+1}(u) - \tildef^{t+1}(u')| \le L_c \|u - u'\|,
\end{equation*}
which gives $\| f^{t+1} \|_\infty \le L_c \diam (\calU)$. We need to find bounds on $\int \tildef^{t+1} \, d\mu$. Using Lemma~\ref{lem: monotonicity} and weak duality, we have
\begin{equation}
-D_0^- \le D_0 \le D (\tildef^{t+1}, \tildeg^{t+1},h^{t}) = \int \tildef^{t+1} \, d\mu + \int h^{t} \, d\nu \le \int c \, d(\mu \otimes \nu) \le \|c\|_\infty. \label{eq: integral bound}
\end{equation}
For $t \ge 1$, using Lemma~\ref{lem: monotonicity} and weak duality again, we obtain
\begin{equation}
-D_0^- \le D_0 = D(f^0,g^0,h^0) \le D(f^t,g^t,h^t) = \int h^{t} \, d\nu \le \int c \, d(\mu \otimes \nu) \le \| c \|_\infty. \label{eq: lower bound h} \\
\end{equation}
For $t=0$, we use the bound $|\int h^0 \, d\nu| \le \| h^0\|_\infty$. In either case, we obtain $|\int \tildef^{t+1} \, d\mu| \le \| c \|_\infty + D_0^{-} + \|h^0\|_\infty$, which gives the desired bound on $\| \tildef^{t+1}\|_\infty$.  

The upper bound on $\|\tildeh^{t+1}\|_\infty$ follows by noting that
\begin{equation*}
\tildeh^{t+1} (x,y) = -\varepsilon \log \int \exp \left ( \frac{\tildef^{t+1}(u) + \langle \tildeg^{t+1}(u),x\rangle -c(u,y)}{\varepsilon} \right ) \, d\mu(u).
\end{equation*}
An analogous identity holds for $h^{t+1}$. 
This completes the proof.
\end{proof}

It remains to establish an upper bound on $\big \| \tildeg^{t+1} \big \|_\infty \vee \big \| g^{t+1} \big \|_\infty$. 
\begin{lemma}
\label{lem: bound on g}
For $t \in \N_0$,
\begin{equation*}
\big \| \tildeg^{t+1} \big \|_\infty \vee \big \| g^{t+1} \big \|_\infty \le 4\|\Sigma_X^{-1}\|_{\op} M_x  \left ( 4 \| c \|_\infty - \frac{3}{2} D_0 + K_f + \| h^0 \|_\infty +\varepsilon \log \left ( \frac{3}{2} \right )\right ) = K_g.
\end{equation*}
\end{lemma}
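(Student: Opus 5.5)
The plan is to repeat the exponential-family argument from the proof of Proposition~\ref{prop: potential bound}. This time the exact mean-zero constraint (\ref{eq: update g}) satisfied by $\tildeg^{t+1}$ plays the role of (\ref{eq: g}), and $\tildef^{t+1}$, $h^t$ replace $\barf$, $\barh$.

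\textbf{Setup.} Fix $u \in \calU$ and set
\[
p_u(x,y) := \frac{\tildef^{t+1}(u) + \langle \tildeg^{t+1}(u),x\rangle + h^t(x,y) - c(u,y)}{\varepsilon}.
\]
By the definition of $\tildef^{t+1}$, the measure $e^{p_u}\,d\nu$ is a probability measure, and by (\ref{eq: update g}) it has mean zero in $x$. For $v \in \mathbb{S}^{d_x-1}$, I take the same tilted measure as before,
\[
dq^{(v)} = \big(1+\delta\langle \Sigma_X^{-1}v,x\rangle\big)\,d\nu, \qquad \delta = \frac{1}{2\|\Sigma_X^{-1}\|_{\op}M_x}.
\]
This is a probability measure with density in $[1/2,3/2]$ and $x$-mean equal to $\delta v$. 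Inequality (\ref{eq: exponential}) gives $0=\int (e^{p_u}-e^{\hat p_v})\,d\nu \ge \int (p_u-\hat p_v)\,dq^{(v)}$, and rearranging yields
\[
\delta\langle \tildeg^{t+1}(u),v\rangle \le \|c\|_\infty + \|\tildef^{t+1}\|_\infty - \int h^t\,dq^{(v)} + \varepsilon\log(3/2).
\]
Lemma~\ref{lem: bound on f and h} already gives $\|\tildef^{t+1}\|_\infty\le K_f$. So the only new work is bounding $-\int h^t\,dq^{(v)}$ uniformly in $t$.

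\textbf{Two inputs on $h^t$.} The first is a uniform upper bound. For $t\ge1$, apply Jensen to the defining formula of $\tildeh^t$ and then undo the normalization. Since $\int f^t\,d\mu=0$ and $\int g^t\,d\mu = 0$, the shift constants cancel and
\[
h^t(x,y) = -\varepsilon\log\int e^{(f^t(u)+\langle g^t(u),x\rangle - c(u,y))/\varepsilon}\,d\mu(u) \le \int c(\cdot,y)\,d\mu \le \|c\|_\infty .
\]
For $t=0$ we simply have $h^0\le\|h^0\|_\infty$. Hence in all cases $h^t\le C := \|c\|_\infty+\|h^0\|_\infty$.

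The second is a lower bound on the mean. For $t\ge 1$, (\ref{eq: lower bound h}) gives $\int h^t\,d\nu \ge D_0$, and for $t=0$ we have $\int h^0\,d\nu\ge -\|h^0\|_\infty$.

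\textbf{Bounding $-\int h^t\,dq^{(v)}$.} Write $w := \delta\langle\Sigma_X^{-1}v,x\rangle \in[-1/2,1/2]$, which satisfies $\int w\,d\nu=0$. Then
\[
-\int h^t\,dq^{(v)} = -\int h^t\,d\nu + \int (C-h^t)(-w)\,d\nu \le -\int h^t\,d\nu + \tfrac12\int (C-h^t)\,d\nu = \tfrac{C}{2} - \tfrac32\int h^t\,d\nu .
\]
Plugging in the mean lower bound, this is at most a quantity of the form $\tfrac12\|c\|_\infty - \tfrac32 D_0 + \|h^0\|_\infty$, up to bookkeeping of the $t=0$ case. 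Choosing $v=\tildeg^{t+1}(u)/\|\tildeg^{t+1}(u)\|$ and collecting terms gives
\[
\|\tildeg^{t+1}\|_\infty \le 2\|\Sigma_X^{-1}\|_{\op}M_x\Big(\text{const}\cdot\|c\|_\infty - \tfrac32 D_0 + K_f + \|h^0\|_\infty + \varepsilon\log\tfrac32\Big).
\]

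\textbf{Passing to $g^{t+1}$, and the main obstacle.} Since $g^{t+1}=\tildeg^{t+1}-\int\tildeg^{t+1}\,d\mu$, we get $\|g^{t+1}\|_\infty\le 2\|\tildeg^{t+1}\|_\infty$. This accounts for the factor $4$ in $K_g$, with the slack constants absorbed into $4\|c\|_\infty$. The main obstacle I expect is the step bounding $-\int h^t\,dq^{(v)}$: unlike $\barh$, the iterate $h^t$ does not satisfy $\int h^t\,d\nu\ge 0$. Everything hinges on monotonicity (Lemma~\ref{lem: monotonicity}) plus weak duality supplying the $D_0$ lower bound, and on the one-sided uniform bound $h^t\le C$ coming from the normalization. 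The $t=0$ case has to be handled separately via $\|h^0\|_\infty$.
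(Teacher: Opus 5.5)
Your proposal is correct and is essentially the paper's proof. It uses the same tilted measure $q^{(v)}$ and tangent-line inequality, the same two inputs on $h^t$ ($h^t\le\|c\|_\infty$ for $t\ge1$ via Jensen and the normalization, and $\int h^t\,d\nu\ge D_0$ via monotonicity), and the same factor $2$ when passing from $\tildeg^{t+1}$ to $g^{t+1}$.

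The only local difference is how you control $-\int h^t\,dq^{(v)}$. You use the centering trick from the proof of Proposition~\ref{prop: potential bound}, which gives the sharper bound $\tfrac12\|c\|_\infty-\tfrac32 D_0$ for $t\ge1$; the paper instead uses the cruder $\tfrac32\|h^t\|_{L^1(\nu)}\le\tfrac32(2\|c\|_\infty-D_0)$. In your identity the integrand should be $(C-h^t)\,w$ rather than $(C-h^t)(-w)$, which is harmless since only $|w|\le 1/2$ is used. For $t=0$, both arguments bound $-\int h^0\,dq^{(v)}\le\|h^0\|_\infty$ directly, and fitting that case under the constant $4\|c\|_\infty$ uses $D_0\le\|c\|_\infty$ from weak duality.
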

\begin{remark}
Combining the preceding lemma, we have 
\begin{equation*}
\big \| \tildeh^{t+1} \big \|_\infty \vee \big \| h^{t+1} \big \|_\infty \le \| c \|_\infty + K_f + K_g M_x = K_h,  \ t \in \N_0.
\end{equation*}
\end{remark}

\begin{proof}
Pick any $u \in \calU$. 
As in the proof of Proposition~\ref{prop: potential bound}, for any $v \in \mathbb{S}^{d_x-1}$, define a probability measure $q^{(v)}$ on $\calX \times \calY$ by
\begin{equation*}
dq^{(v)}(x,y) = e^{\hat{p}_v(x,y)}\,d\nu(x,y) \quad \text{with} \quad \hat{p}_v(x,y) = \log \big(1 + \delta \langle \Sigma_X^{-1}v, x \rangle \big),
\end{equation*}
where we choose $\delta = 1/(2\|\Sigma_X^{-1}\|_{\op}M_x)$, so that $1 + \delta \langle \Sigma_X^{-1}v, x \rangle \in [1/2, 3/2]$ for all $(x,v) \in \calX \times \mathbb{S}^{d_x-1}$. 
By construction, $\int_{\calX \times \calY} x \,dq^{(v)} (x,y)= \delta v$.
Consider a function $p_u$ on $\calX \times \calY$ defined by 
\begin{equation*}
p_u(x,y) =\frac{1}{\varepsilon} \big (   \tildef^{t+1}(u) +\langle \tildeg^{t+1}(u), x \rangle + h^{t}(x,y)-c(u,y) \big).
\end{equation*}
By construction, $\int e^{p_u} \, d\nu = 1$.

Now, using inequality (\ref{eq: exponential}), we obtain
\begin{equation*}
\begin{split}
0  &= \int \big( e^{p_u} - e^{\hat{p}_v} \big) \, d\nu \\
&\ge \int \left(\frac{ \tildef^{t+1}(u)+\langle \tildeg^{t+1}(u), x \rangle  +h^{t}(x,y)-c(u,y)}{\varepsilon} - \hat{p}_v(x,y) \right) \,dq^{(v)}(x,y).
\end{split}
\end{equation*}
Rearranging terms, we have 
\begin{equation*}
\begin{split}
\langle \tildeg^{t+1}(u), v \rangle &\le \frac{1}{\delta} \int \left(c(u,y)   -\tildef^{t+1}(u) -h^{t}(x,y) + \varepsilon \hat{p}_v(x,y) \right) \,dq^{(v)}(x,y) \\
&\le 2\|\Sigma_X^{-1}\|_{\op} M_x   \left ( \|c\|_\infty + K_f - \int h^{t} \, dq^{(v)}  + \varepsilon \log \left ( \frac{3}{2} \right ) \right).
\end{split}
\end{equation*}

We shall find a lower bound for $\int h^{t} \, dq^{(v)}$.
For $t \ge 1$, Jensen's inequality yields
\begin{equation}
h^t(x,y) \le -\int \big(f^t+\langle g^t,x\rangle-c(\cdot,y) \big) \, d\mu = \int c(\cdot,y) \, d\mu \le \|c\|_\infty, \ \forall (x,y) \in \calX \times \calY. \label{eq: upper bound h}
\end{equation}
Combining inequalities (\ref{eq: lower bound h}) and (\ref{eq: upper bound h}), we have
\begin{equation*}
\big \| h^{t} \big \|_{L^1(\nu)} = 2\int (h^{t})^+ \, d\nu - \int h^{t} \, d\nu \le 2 \| c \|_\infty - D_0.
\end{equation*}
This implies that 
\begin{equation*}
\int h^{t} \, dq^{(v)} \ge - \big \| h^{t} \big \|_{L^1(q^{(v)})} \ge - \frac{3}{2} \big \| h^{t} \big \|_{L^1(\nu)} \ge -\frac{3}{2} \big ( 2\| c \|_\infty -D_0 \big).
\end{equation*}
For $t=0$, we have $\int h^{0} \, dq^{(v)} \ge -\|h^0\|_\infty$. 
Putting everything together, we conclude
\begin{equation*}
\langle \tildeg^{t+1}(u),v \rangle \le 2\|\Sigma_X^{-1}\|_{\op} M_x  \left ( 4\| c \|_\infty - \frac{3}{2} D_0 + K_f + \| h^0 \|_\infty+ \varepsilon \log \left ( \frac{3}{2} \right )\right ).
\end{equation*}
This yields the desired bound for $\big \| \tildeg^{t+1} \big \|_\infty \vee \big \| g^{t+1} \big \|_\infty $.
\end{proof}

\subsection{Proof of Theorem \ref{thm: linear conv}}
Given the quantitative bounds on the Sinkhorn iterates, the claim of Theorem \ref{thm: linear conv} essentially follows by adapting the proof of Theorem 3.3 in \cite{carlier2022linear}. We shall organize the proof to adapt the techniques developed in \cite{attouch2013convergence,bolte2017error,lewis2025complexity} (among others)
and establish a PL inequality and slope-ascent conditions along the Sinkhorn iterates.
Recalling the notation in (\ref{eq: gradient}), we set
\[
(\ell_f^t,\ell_g^t,\ell_h^t) = \big (\ell_f(f^t,g^t,h^t), \ell_g(f^t,g^t,h^t),\ell_h(f^t,g^t,h^t) \big).
\]
Note that $\ell^t_h=0$ for $t \ge 1$ by construction. The following PL inequality follows directly from Propositions \ref{prop: PL} and \ref{prop: sinkhorn potential}. Recall the norm $\| \cdot \|_{\calH}$ defined in (\ref{eq: norm}).

\begin{lemma}[PL inequality]
\label{lem: PL}
For $t \in \N_0$, we have
\begin{equation*}
\big\|(\ell_f^{t},\ell_g^{t},\ell_h^{t})\big\|_{\calH}^2 \ge \frac{2(1 \wedge \underline{\lambda})e^{-\bar{K}/\varepsilon}}{\varepsilon} \big(D(\barf,\barg,\barh) - D (f^{t},g^{t},h^{t}) \big).
\end{equation*}
\end{lemma}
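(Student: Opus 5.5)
The plan is to apply Proposition~\ref{prop: PL} with $(f,g,h) = (f^t,g^t,h^t)$ and $K = \bar{K}$. Proposition~\ref{prop: PL} needs two inputs: the current triplet must lie in the normalized space, and both exponents must be bounded below by $-\bar K$. Once these hold, the stated inequality is exactly the conclusion of Proposition~\ref{prop: PL}, since the gradients on the left are the $(\ell_f^t,\ell_g^t,\ell_h^t)$ defined above.

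First, membership. For $t \ge 1$, the normalization step gives $\int f^t\,d\mu = 0$ and $\int g^t\,d\mu = 0$, and continuity follows from Remark~\ref{rem: sinkhorn}~(i). For $t=0$, the triplet lies in $\calC_\diamond(\calU)\times\calC_\diamond(\calU;\R^{d_x})\times\calC(\calX\times\calY)$ by assumption on the initialization.

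Second, the lower bound on the exponent along the iterates. For $t\ge1$, Proposition~\ref{prop: sinkhorn potential} (applied with index $t-1$) gives $\|f^t\|_\infty\le K_f$, $\|g^t\|_\infty\le K_g$ and $\|h^t\|_\infty\le K_h$. For $t=0$, the same three bounds hold by the standing convention $K_f\ge\|f^0\|_\infty$, $K_g\ge\|g^0\|_\infty$, $K_h\ge\|h^0\|_\infty$. Using $\|x\|\le M_x$ and $0\le c\le\|c\|_\infty$, we get for every $(u,x,y)$
\[
f^t(u)+\langle g^t(u),x\rangle+h^t(x,y)-c(u,y)\ \ge\ -K_f-K_gM_x-K_h-\|c\|_\infty=-\bar K .
\]

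Third, the same lower bound for the optimal triplet. The convention also gives $K_f\ge\|\barf\|_\infty$, $K_g\ge\|\barg\|_\infty$ and $K_h\ge\|\barh\|_\infty$, which is legitimate by Proposition~\ref{prop: potential bound}. Repeating the estimate above then shows $\barf+\langle\barg,x\rangle+\barh-c\ge-\bar K$.

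With both hypotheses verified, Proposition~\ref{prop: PL} applies with $K=\bar K$ and yields the stated inequality. Nothing here is a genuine obstacle, because the substantive work is already contained in the uniform-in-time bounds of Proposition~\ref{prop: sinkhorn potential}. The only point needing care is the case $t=0$, which the adjusted constants cover.
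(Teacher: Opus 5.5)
Your proposal is correct and matches the paper's argument. The paper states only that the lemma ``follows directly from Propositions~\ref{prop: PL} and~\ref{prop: sinkhorn potential},'' and your check of the normalization, the uniform sup-norm bounds (including $t=0$ via the adjusted constants), and the resulting lower bound $-\bar K$ for both triplets supplies exactly the details it leaves implicit.
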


Next, we establish slope-ascent conditions for the Sinkhorn iterates.

\begin{lemma}[Slope-ascent conditions]
\label{lem: ascent}
For $t \in \N_0$,
\begin{equation*}
\begin{split}
&D(f^{t+1},g^{t+1},h^{t+1})- D(f^{t},g^{t},h^{t})\\
&\quad \ge \frac{e^{-2\bar{K}/\varepsilon} }{2\varepsilon} \big ( \big \|f^t-\tildef^{t+1}\big \|_{L^2(\mu)}^2 + \big \| \langle g^t-\tildeg^{t+1},x \rangle \big \|_{L^2(\mu \otimes \nu)}^2+\big \|h^t-\tildeh^{t+1}\big \|_{L^2(\nu)}^2 \big ), \quad \text{and} \\
&\sqrt{\big \| \tildef^{t+1}-f^t \big \|_{L^2(\mu)}^2 + \big \| \tildeh^{t+1}-h^t \big \|_{L^2(\nu)}^2} \ge \frac{ \varepsilon e^{-\bar{K}/\varepsilon}}{\sqrt{3}M_x}\big\|(\ell_f^{t+1},\ell_g^{t+1},\ell_h^{t+1})\big\|_{\calH}.
\end{split}
\end{equation*}
\end{lemma}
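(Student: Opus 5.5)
For the first inequality, I will sharpen the three steps of Lemma~\ref{lem: monotonicity} by using the quadratic lower bound (\ref{eq: exponential 2}) in place of the linear bound (\ref{eq: exponential}). The normalization step does not change $D$, so it suffices to bound $D(\tildef^{t+1},\tildeg^{t+1},\tildeh^{t+1})-D(f^t,g^t,h^t)$. I will telescope this through $(f^t,\tildeg^{t+1},h^t)$ and $(\tildef^{t+1},\tildeg^{t+1},h^t)$.

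\emph{The $g$-step.} The increment equals $\varepsilon\int e^{(f^t+h^t-c)/\varepsilon}\big(e^{\langle g^t,x\rangle/\varepsilon}-e^{\langle\tildeg^{t+1},x\rangle/\varepsilon}\big)\,d(\mu\otimes\nu)$. I apply (\ref{eq: exponential 2}) with $a$ the full exponent at $\tildeg^{t+1}$ and $b$ the full exponent at $g^t$. The first-order term vanishes by (\ref{eq: update g}), exactly as in Lemma~\ref{lem: monotonicity}. What remains is $\frac{e^{-K/\varepsilon}}{2\varepsilon}\|\langle g^t-\tildeg^{t+1},x\rangle\|^2_{L^2(\mu\otimes\nu)}$. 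Here $K$ is a lower bound for the exponents $f+\langle g,x\rangle+h-c$ along the iterates. By Proposition~\ref{prop: sinkhorn potential} and the convention $K_\cdot\ge\|\cdot^0\|_\infty$, every such exponent is at least $-\bar K$.

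\emph{The $f$- and $h$-steps.} The increments are $\varepsilon\int(e^{s}-1-s)\,d\mu$ with $s=(f^t-\tildef^{t+1})/\varepsilon$, and the analogous expression in $h$. Since $|s|\le 2\bar K/\varepsilon$, I get $e^s-1-s\ge \frac{e^{-2\bar K/\varepsilon}}{2}s^2$. This yields $\frac{e^{-2\bar K/\varepsilon}}{2\varepsilon}\|f^t-\tildef^{t+1}\|^2$ and the same for $h$. Summing the three steps and weakening $e^{-\bar K/\varepsilon}$ to $e^{-2\bar K/\varepsilon}$ gives the first claim.

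For the second inequality, I will bound the gradients at $(f^{t+1},g^{t+1},h^{t+1})$. Since $\tildeh^{t+1}$ is an exact $h$-update and the normalization is an affine shift that preserves the density $e^{F/\varepsilon}$, we have $\ell_h^{t+1}=0$. I then compare with the exact optimality relations at the intermediate point, namely
\[
e^{\tildef^{t+1}(u)/\varepsilon}\int e^{(\langle\tildeg^{t+1}(u),x\rangle+h^t-c)/\varepsilon}d\nu=1
\quad\text{and}\quad
\int x e^{(\langle\tildeg^{t+1},x\rangle+h^t-c)/\varepsilon}d\nu=0 .
\]
Then $\ell_f^{t+1}(u)=\int (e^{F^{t+1}/\varepsilon}-e^{G/\varepsilon})\,d\nu$ and $\ell_g^{t+1}(u)=\int x(e^{F^{t+1}/\varepsilon}-e^{G/\varepsilon})\,d\nu$. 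Here $F^{t+1}$ and $G$ are the exponents with $h=\tildeh^{t+1}$ and $h=h^t$ respectively, and they differ only through $\tildeh^{t+1}-h^t$ (mod the affine normalization, which cancels). By (\ref{eq: exponential 3}) and Jensen, $\|\ell_f^{t+1}\|_{L^2(\mu)}^2\le \varepsilon^{-2}e^{2\bar K/\varepsilon}\|\tildeh^{t+1}-h^t\|^2_{L^2(\nu)}$, and $\|\ell_g^{t+1}\|^2\le M_x^2\varepsilon^{-2}e^{2\bar K/\varepsilon}\|\tildeh^{t+1}-h^t\|^2$. Using $M_x\ge1$ and adding the $\tildef$ term for slack, the total is at most $\frac{3M_x^2e^{2\bar K/\varepsilon}}{\varepsilon^2}\big(\|\tildef^{t+1}-f^t\|^2+\|\tildeh^{t+1}-h^t\|^2\big)$. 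Taking square roots gives the claim.

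The main obstacle is bookkeeping. I must check that all exponents, at both the old and new points and including the mixed points $(f^t,\tildeg^{t+1},h^t)$ and $(\tildef^{t+1},\tildeg^{t+1},h^t)$, lie in $[-\bar K,\bar K]$. I also need the affine normalization to leave $F$ unchanged. Both follow from Proposition~\ref{prop: sinkhorn potential}, where the $t=0$ case is handled by the convention on $K_f,K_g,K_h$.
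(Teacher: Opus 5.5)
Your proposal is correct and follows essentially the same route as the paper. For the first inequality you use the same three-step decomposition $g\to f\to h$ with (\ref{eq: exponential 2}), and for the second you compare $\ell_f^{t+1},\ell_g^{t+1}$ with the exact relations at $(\tildef^{t+1},\tildeg^{t+1},h^t)$ via (\ref{eq: exponential 3}). The only cosmetic difference is in the zero term subtracted from $\ell_g^{t+1}$: you keep the factor $e^{\tildef^{t+1}/\varepsilon}$, so only $\tildeh^{t+1}-h^t$ appears and the $\tildef$ term is pure slack, whereas the paper uses $e^{f^t/\varepsilon}$ and picks up both differences.
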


\begin{proof}
Decompose $D(f^{t+1},g^{t+1},h^{t+1})- D(f^{t},g^{t},h^{t})$ as
\begin{equation*}
\begin{split}
D(f^{t+1},g^{t+1},h^{t+1})- D(f^{t},g^{t},h^{t}) &= D(f^{t},\tildeg^{t+1},h^{t}) - D(f^{t},g^{t},h^{t})  \\
&\quad +D(\tildef^{t+1},\tildeg^{t+1},h^{t}) - D(f^{t},\tildeg^{t+1},h^{t}) \\
&\quad +D(\tildef^{t+1},\tildeg^{t+1},\tildeh^{t+1}) - D(\tildef^{t+1},\tildeg^{t+1},h^{t}).
\end{split}
\end{equation*}

Using inequality (\ref{eq: exponential 2}), we have 
    \begin{equation*}
    e^{\langle g^{t}, x \rangle / \varepsilon} - e^{\langle \tildeg^{t+1}, x \rangle / \varepsilon} -e^{\langle \tildeg^{t+1}, x \rangle/ \varepsilon} \frac{\langle g^{t}(u) - \tildeg^{t+1}(u), x \rangle}{\varepsilon} \ge \frac{e^{-K_gM_x/\varepsilon}}{2\varepsilon^2} \langle g^{t}(u) - \tildeg^{t+1}(u), x \rangle^2.
    \end{equation*}
    Multiply both sides by  $e^{(f^{t}+h^{t}-c)/\varepsilon}$ and integrate over $\mu \otimes \nu$ to obtain
    \begin{equation*}
    \begin{split}
    &D(f^{t},\tildeg^{t+1},h^{t}) - D(f^{t},g^{t},h^{t}) \\
    &\ge \int \left\langle g^{t} -  \tildeg^{t+1}, e^{f^{t}/\varepsilon} \underbrace{\int x \exp\left(\frac{  \langle \tildeg^{t+1}, x \rangle + h^{t}-c}{\varepsilon}\right) \,d\nu}_{=0} \right\rangle \,d\mu \\
    &\quad + \frac{e^{-\bar{K}/\varepsilon}}{2\varepsilon} \big \| \langle g^{t} - \tildeg^{t+1}, x \rangle \big \|_{L^2(\mu \otimes \nu)}^2 \\
    &\ge \frac{e^{-\bar{K}/\varepsilon}}{2\varepsilon} \big \| \langle g^{t} - \tildeg^{t+1}, x \rangle \big \|_{L^2(\mu \otimes \nu)}^2.
    \end{split}
    \end{equation*}

    Next, using inequality (\ref{eq: exponential 2}) again, we observe that
\begin{equation*}
\begin{split}
D(\tildef^{t+1},\tildeg^{t+1},h^{t}) - D(f^{t},\tildeg^{t+1},h^{t})  &= \varepsilon \int \left( e^{(f^{t}-\tildef^{t+1})/\varepsilon} - 1-(f^{t}-\tildef^{t+1})/\varepsilon \right)\, d\mu \\
&\ge \frac{e^{-2K_f/\varepsilon}}{2\varepsilon} \big \| f^{t} -\tildef^{t+1} \big \|_{L^2(\mu)}^2, \quad \text{and} \\
D(\tildef^{t+1},\tildeg^{t+1},\tildeh^{t+1}) - D(\tildef^{t+1},\tildeg^{t+1},h^{t})  &= \varepsilon \int \left( e^{(h^{t}-\tildeh^{t+1})/\varepsilon} - 1-(h^{t}-\tildeh^{t+1})/\varepsilon \right)\, d\nu \\
&\ge \frac{e^{-2K_h/\varepsilon}}{2\varepsilon} \big \| h^{t} - \tildeh^{t+1} \big \|_{L^2(\nu)}^2.
\end{split}
\end{equation*}
Putting these together, we obtain the first inequality.

For the second inequality, we first observe that
\begin{equation*}
\begin{split}
\ell_f^{t+1} &= e^{\tildef^{t+1}/\varepsilon} \int e^{(\langle \tildeg^{t+1},x\rangle+\tildeh^{t+1}-c)/\varepsilon} \, d\nu -
\underbrace{e^{\tildef^{t+1}/\varepsilon} \int e^{(\langle \tildeg^{t+1},x\rangle+h^{t}-c)/\varepsilon} \, d\nu}_{=1}, \\
\ell_g^{t+1} &= e^{\tildef^{t+1}/\varepsilon} \int x e^{(\langle \tildeg^{t+1},x\rangle+\tildeh^{t+1}-c)/\varepsilon}\, d\nu- e^{f^{t}/\varepsilon} \underbrace{\int x e^{(\langle \tildeg^{t+1},x\rangle+h^{t}-c)/\varepsilon} \big) \, d\nu}_{=0}, \\
\ell_h^{t+1}&= 0.
\end{split}
\end{equation*}
Recall that we have assumed that $M_x \ge 1$. 
In view of Proposition~\ref{prop: sinkhorn potential} and using inequality (\ref{eq: exponential 3}), we observe that
\begin{equation*}
\begin{split}
|\ell_f^{t+1}| &\le \varepsilon^{-1}e^{\bar{K}/\varepsilon} \big \| \tildeh^{t+1}-h^t\big \|_{L^1(\nu)}, \\
\big \| \ell_g^{t+1} \big \| &\le M_x\varepsilon^{-1} e^{\bar{K}/\varepsilon} \big ( \big | \tildef^{t+1}-f^t\big | + \big \| \tildeh^{t+1}-h^t\big \|_{L^1(\nu)}\big), 
\end{split}
\end{equation*}
which yield that
\begin{equation*}
\begin{split}
\big \|\ell_f^{t+1}\big \|_{L^2(\mu)}^2 &\le \varepsilon^{-2} e^{2\bar{K}/\varepsilon} \big \| \tildeh^{t+1}-h^t \big \|_{L^2(\nu)}^2, \\
\big \| \ell_g^{t+1} \big \|_{L^2(\mu)}^2 &\le 2\varepsilon^{-2}M_x^2 e^{2\bar{K}/\varepsilon} \big (\big \| \tildef^{t+1}-f^t \big \|_{L^2(\mu)}^2 + \big \| \tildeh^{t+1}-h^t \big \|_{L^2(\nu)}^2 \big ).
\end{split}
\end{equation*}
These inequalities give the second inequality in the statement of the lemma. 
\end{proof}

We are now ready to prove Theorem \ref{thm: linear conv}.

\begin{proof}[Proof of Theorem \ref{thm: linear conv}]
The proof is an adaptation of the argument in the proof of Theorem 14 in \cite{bolte2017error}.
Let $u_{t} :=  D(\barf,\barg,\barh) - D(f^{t},g^{t},h^{t})$. 
Combining Lemmas \ref{lem: PL} and \ref{lem: ascent}, and recalling the choice of $\tau$ in (\ref{eq: tau}), we obtain
\begin{equation*}
u_t - u_{t+1} \ge \tau u_{t+1}.
\end{equation*}
Solving this inequality yields the first claim.

We shall prove the second claim. If $u_{t_0}=0$ for some $t_0$, then $u_t=0$ for all $t \ge t_0$ by monotonicity of the dual objective along the Sinkhorn iterates (Lemma~\ref{lem: monotonicity}), which entails $(f^t,g^t,h^t) = (\barf,\barg,\barh)$ for all $t \ge t_0$ by uniqueness of dual potentials, so the second claim of the theorem follows trivially. 

Suppose now that $u_{t} > 0$ for all $t$.
On $\calH=L^2(\mu) \times L^2(\mu;\R^{d_x}) \times L^2(\nu)$, consider a metric
\begin{equation}
\mathsf{d}\big ( (f,g,h), (\tildef,\tildeg,\tildeh) \big) = \sqrt{\big \|f-\tildef\big \|_{L^2(\mu)}^2 + \big \| \langle g-\tildeg,x \rangle \big \|_{L^2(\mu \otimes \nu)}^2+\big \|h-\tildeh\big \|_{L^2(\nu)}^2},  \label{eq: metric}
\end{equation}
which is equivalent to the norm $\| \cdot \|_{\calH}$ in the sense that 
\begin{equation*}
\big(1 \wedge \sqrt{\underline{\lambda}}\big) \big \| (f-\tildef,g-\tildeg,h-\tildeh) \big \|_{\calH} \le \mathsf{d}\big ( (f,g,h), (\tildef,\tildeg,\tildeh) \big) \le M_x \big \| (f-\tildef,g-\tildeg,h-\tildeh) \big \|_{\calH}. 
\end{equation*}
The slope-ascent conditions in Lemma~\ref{lem: ascent} imply 
\begin{equation}
\begin{split}
&u_{t} - u_{t+1} \ge \alpha \mathsf{d}_{t+1}^2, \quad \text{with} \quad \mathsf{d}_{t+1} := \mathsf{d}\big ((f^{t},g^{t},h^{t}), (\tilde f^{t+1},\tilde g^{t+1},\tilde h^{t+1}) \big), \\
&\mathsf{d}_{t+1} \ge \beta \big\| (\ell_f^{t+1},\ell_g^{t+1},\ell_h^{t+1}) \big\|_{\calH}, 
\end{split}
\label{eq: slope}
\end{equation}
where $\alpha = \frac{e^{-2\bar{K}/\varepsilon} }{2\varepsilon}$ and $\beta =  \frac{(1 \wedge \sqrt{\underline{\lambda}})\varepsilon e^{-\bar{K}/\varepsilon} }{\sqrt{3} M_x}$.
The PL inequality in Lemma~\ref{lem: PL} implies
\begin{equation*}
2\sqrt{u_{t}} \le  \gamma^{-1} \big \| (\ell_f^{t},\ell_g^{t},\ell_h^{t})\big \|_{\calH} \quad \text{with} \quad \gamma = \sqrt{\frac{(1 \wedge \underline{\lambda})e^{-\bar{K}/\varepsilon}}{2\varepsilon}}, 
\end{equation*}
which yields that
\begin{equation*}
\begin{split}
\sqrt{u_{t}} - \sqrt{u_{t+1}} \ge \frac{1}{2\sqrt{u_{t}}} (u_{t} - u_{t+1}) \ge \underbrace{\alpha \beta \gamma}_{=:\zeta^{-1}} \frac{\mathsf{d}_{t+1}^2} {\mathsf{d}_t} \ge \zeta^{-1}(2\mathsf{d}_{t+1}-\mathsf{d}_t),
\end{split}
\end{equation*}
where we used the inequality $\mathsf{d}_{t+1}^2 \ge 2 \mathsf{d}_t \mathsf{d}_{t+1} - \mathsf{d}_{t}^2$.
Summing over $t$ gives
\begin{equation*}
\zeta(\sqrt{u_1}-\sqrt{u_{t+1}}) + \mathsf{d}_{1} \ge \sum_{s=1}^{t} \mathsf{d}_{s+1} + \mathsf{d}_{t+1} \ge \sum_{s=1}^t \mathsf{d}_{s+1}.
\end{equation*}
We observe that, for $a = \int \tildef^{t+1} \, d\mu$ and $v = \int \tildeg^{t+1}\, d\mu$, 
\begin{equation*}
\begin{split}
\mathsf{d}_{t+1}^2 &= \big \| f^t-f^{t+1}\big \|_{L^2(\mu)}^2 +a^2 +\big \|\langle g^t-g^{t+1},x\rangle \big \|_{L^2(\mu  \otimes \nu)}^2 + \big \| \langle v,x \rangle \big \|_{L^2(\nu)}^2 \\
&\quad + \big \| h^{t} - h^{t+1} +a  + \langle v,x \rangle \big \|_{L^2(\nu)}^2 \\
&= \big \| f^t-f^{t+1}\big \|_{L^2(\mu)}^2  +\big \|\langle g^t-g^{t+1},x\rangle \big \|_{L^2(\mu  \otimes \nu)}^2  + \big \| h^{t} - h^{t+1}\big \|_{L^2(\nu)}^2 \\
&\quad +2a^2 +2 \| \langle v,x \rangle\|_{L^2(\nu)}^2+  2\big\langle h^{t}-h^{t+1},a+\langle v,x \rangle \big \rangle_{L^2(\nu)} \\
&\ge  \big \| f^t-f^{t+1}\big \|_{L^2(\mu)}^2  +\big \|\langle g^t-g^{t+1},x\rangle \big \|_{L^2(\mu  \otimes \nu)}^2  +\frac{1}{2} \big \| h^{t} - h^{t+1}\big \|_{L^2(\nu)}^2 \\
&\ge \underbrace{\frac{1}{2}\mathsf{d}^2\big( (f^t,g^t,h^t), (f^{t+1},g^{t+1},h^{t+1}) \big )}_{=:\Delta_{t+1}^2},
\end{split}
\end{equation*}
where we used the inequality
\begin{equation*}
\begin{split}
2\big\langle h^{t}-h^{t+1},a+\langle v,x \rangle\big \rangle_{L^2(\nu)} &\ge - \frac{1}{2} \big \| h^t-h^{t+1} \big \|_{L^2(\nu)}^2 - 2 \| a+\langle v,x \rangle \|_{L^2(\nu)}^2 \\
&=- \frac{1}{2} \big \| h^t-h^{t+1} \big \|_{L^2(\nu)}^2 - 2a^2-2\| \langle v,x \rangle \|_{L^2(\nu)}^2. 
\end{split}
\end{equation*}
%Similarly, we have $\mathsf{d}_{t+1}^2 \le (3/2)\Delta_{t+1}^2 \le 2 \Delta_{t+1}^2$. 
These inequalities yield
\begin{equation*}
\zeta(\sqrt{u_1}-\sqrt{u_{t+1}}) +\mathsf{d}_{1} \ge \sum_{s=1}^t \Delta_{s+1}
\end{equation*}
\iffalse
\begin{equation*}
\sqrt{u_{t}} - \sqrt{u_{t+1}} \ge \underbrace{\frac{\alpha \beta \gamma}{2\sqrt{2}}}_{=\zeta^{-1}} \frac{\Delta_{t+1}^2}{\Delta_t} \ge \zeta^{-1} (2\Delta_{t+1}-\Delta_t),
\end{equation*}
where we used the inequality $\Delta_{t+1}^2 \ge 2 \Delta_t \Delta_{t+1} - \Delta_{t}^2$. Summing over $t$ gives
\begin{equation*}
\zeta(\sqrt{u_1}-\sqrt{u_{t+1}}) + \Delta_1 \ge \sum_{s=1}^{t} \Delta_{s+1} + \Delta_{t+1} \ge \sum_{s=1}^t \Delta_{s+1}.
\end{equation*}
\fi
This implies that the sequence $(f^{t},g^{t},h^{t})$ is Cauchy in $\calH$. Since $\calH$ is complete, there exists $(f^\infty,g^\infty,h^\infty) \in \calH$ such that $(f^{t},g^{t},h^{t}) \to (f^\infty,g^\infty,h^\infty)$ in $\calH$. Observe that $\int f^\infty \, d\mu = 0$ and $\int g^\infty \, d\mu = 0$. By taking an a.s. convergent subsequence, and using Proposition~\ref{prop: sinkhorn potential}, we have $\| f^\infty \|_{L^\infty(\mu)} \le K_f, \| g^\infty \|_{L^\infty(\mu)} \le K_g$, and $\| h^\infty \|_{L^\infty(\nu)} \le K_h$. An application of the dominated convergence theorem yields that, along a subsequence,
\begin{equation*}
D(f^\infty,g^\infty,h^\infty) = \lim_{t \to \infty} D(f^{t},g^{t},h^{t}) = D(\barf,\barg,\barh).
\end{equation*}
By uniqueness of dual potentials, we conclude $f^\infty = \barf, g^\infty =  \barg$  $\mu$-a.e. and $h^\infty = \barh$ $\nu$-a.e.

Finally, we observe
\begin{equation*}
\frac{1}{\sqrt{2}}\mathsf{d} \big ( (f^t,g^t,h^t), (f^{t+m},g^{t+m},h^{t+m}) \big) \le \sum_{s=t}^{t+m-1} \Delta_{s+1} \le \zeta (\sqrt{u_t}-\sqrt{u_{t+m}}) + \mathsf{d}_{t}.
\end{equation*}
The first inequality in (\ref{eq: slope}) implies 
\begin{equation*}
\sqrt{\frac{u_{t-1}-u_{t}}{\alpha}} \ge \mathsf{d}_t,
\end{equation*}
so that
\begin{equation*}
\mathsf{d} \big ( (f^t,g^t,h^t), (f^{t+m},g^{t+m},h^{t+m}) \big)\le \zeta (\sqrt{2u_t}-\sqrt{2u_{t+m}}) + \sqrt{\frac{2(u_{t-1}-u_{t})}{\alpha}}.
\end{equation*}
Letting $m \to \infty$, we conclude 
\begin{equation*}
\mathsf{d} \big ( (f^t,g^t,h^t), (\barf,\barg,\barh) \big) \le \zeta \sqrt{2u_t} + \sqrt{\frac{2u_{t-1}}{\alpha}} = O\big( \big (1+\tau\big)^{-t/2} \big).
\end{equation*}
This completes the proof.
\end{proof}

\section{Proofs for Section \ref{sec: modified sinkhorn}}
\label{sec: proof sec 4}

\subsection{Proof of Proposition~\ref{prop: sinkhorn potential modified}}
The proposition follows from Lemmas \ref{lem: elemenary bounds} and \ref{lem: bounds on g modified} below. We start by establishing monotonicity of the dual objective values along the modified Sinkhorn iterates. 

\begin{lemma}[Monotonicity lemma]
\label{lem: monotonicity modified}
If $0 < \eta \le \frac{2\varepsilon}{M_x^2}$, then we have for $t \in \N_0$,
\[
\begin{split}
D (f^{t},g^{t},h^{t}) &\le D  (f^{t},g^{t},\tildeh^{t+1}) \le D(\tildef^{t+1},g^{t},\tildeh^{t+1}) \\
&\le D(\tildef^{t+1},\tildeg^{t+1},\tildeh^{t+1}) = D(f^{t+1},g^{t+1},h^{t+1}).
\end{split}
\]
\end{lemma}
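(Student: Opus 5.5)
The plan is to prove the chain one inequality at a time: the $h$- and $f$-steps are exact block maximizations, the $g$-step is controlled through the cumulant generating function of a tilted measure, and the final equality is affine-shift invariance.

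\emph{The $h$- and $f$-steps.} These follow exactly as in the proof of Lemma~\ref{lem: monotonicity}. By definition of $\tildeh^{t+1}$, for every $(x,y)$ we have $e^{\tildeh^{t+1}(x,y)/\varepsilon}\int e^{(f^t+\langle g^t,x\rangle-c)/\varepsilon}\,d\mu=1$. Hence $\iota(f^t,g^t,\tildeh^{t+1})=1$ and $\iota(f^t,g^t,h^t)=\int e^{(h^t-\tildeh^{t+1})/\varepsilon}\,d\nu$. This gives
\[
D(f^t,g^t,\tildeh^{t+1})-D(f^t,g^t,h^t)=\varepsilon\int\big(e^{(h^t-\tildeh^{t+1})/\varepsilon}-1-(h^t-\tildeh^{t+1})/\varepsilon\big)\,d\nu\ge 0 .
\]
Similarly, $\tildef^{t+1}$ makes $\int e^{(\tildef^{t+1}(u)+\langle g^t(u),x\rangle+\tildeh^{t+1}-c)/\varepsilon}\,d\nu=1$ for every $u$. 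So $\iota(\tildef^{t+1},g^t,\tildeh^{t+1})=1$, and the same computation in $L^1(\mu)$ gives the second inequality.

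\emph{The final equality.} This is the affine-shift invariance of $D$ noted in Remark~\ref{rem: sinkhorn}~(iii), applied with $a=\int\tildef^{t+1}\,d\mu$ and $v=\int\tildeg^{t+1}\,d\mu$.

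\emph{The $g$-step (main obstacle).} This is where the step-size condition enters, and I aim to prove the coercivity estimate (\ref{eq: coercivity}) directly. For each $u\in\calU$, define the probability measure
\[
d\pi^t_u(x,y):=e^{(\tildef^{t+1}(u)+\langle g^t(u),x\rangle+\tildeh^{t+1}(x,y)-c(u,y))/\varepsilon}\,d\nu(x,y).
\]
It is a probability measure by the normalization just noted, and its mean in $x$ is exactly $d_g^t(u)$, so in particular $\|d_g^t(u)\|\le M_x$. Since $\iota(\tildef^{t+1},g^t,\tildeh^{t+1})=1$, we get
\[
D(\tildef^{t+1},\tildeg^{t+1},\tildeh^{t+1})-D(\tildef^{t+1},g^t,\tildeh^{t+1})=\varepsilon\int_{\calU}\Big(1-\int e^{-\eta\langle d_g^t(u),x\rangle/\varepsilon}\,d\pi^t_u\Big)\,d\mu(u).
\]

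Fix $u$ and write $s:=-\eta d_g^t(u)/\varepsilon$. Consider the exponential-family log-partition function $\phi(\lambda):=\log\int e^{\lambda\langle s,x\rangle}\,d\pi^t_u$. Its derivatives are $\phi(0)=0$ and $\phi'(0)=\langle s,d_g^t(u)\rangle=-\eta\|d_g^t(u)\|^2/\varepsilon$. Also, $\phi''(\lambda)$ is the variance of $\langle s,x\rangle$ under the tilted measure, so $\phi''(\lambda)\le\|s\|^2M_x^2=\eta^2M_x^2\|d_g^t(u)\|^2/\varepsilon^2$. A second-order Taylor bound then yields
\[
\phi(1)\le-a_u,\qquad a_u:=\frac{\eta\|d_g^t(u)\|^2}{\varepsilon}\Big(1-\frac{\eta M_x^2}{2\varepsilon}\Big)\ge 0,
\]
where $a_u\ge 0$ uses $\eta\le 2\varepsilon/M_x^2$.

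To convert this into a linear lower bound, I need $1-e^{-a_u}\ge e^{-1/2}a_u$, which holds once $a_u\le 1/2$ (using $1-e^{-a}\ge ae^{-a}$). Put $z:=\eta M_x^2/\varepsilon\in(0,2]$. Using $\|d_g^t(u)\|\le M_x$, we have $a_u\le z(1-z/2)\le 1/2$, since the maximum of $z(1-z/2)$ is attained at $z=1$. Therefore
\[
1-\int e^{-\eta\langle d_g^t(u),x\rangle/\varepsilon}\,d\pi^t_u\ge e^{-1/2}a_u .
\]
Multiplying by $\varepsilon$ and integrating in $\mu$ gives (\ref{eq: coercivity}), whose right-hand side is nonnegative, and this completes the chain.

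The delicate points I expect are the variance bound $\phi''\le\|s\|^2M_x^2$ and the uniform control $a_u\le 1/2$; both follow from $\operatorname{supp}$ of the $x$-marginal lying in the ball of radius $M_x$.
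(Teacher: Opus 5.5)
Your proof is correct and takes essentially the same route as the paper. The $h$- and $f$-steps are handled as in Lemma~\ref{lem: monotonicity}. For the $g$-step you use the exponential-family log-partition function (the paper's $A_u$, which you restrict to the segment from $g^t(u)$ to $\tildeg^{t+1}(u)$), bound its second derivative by $M_x^2/\varepsilon$ via the covariance, and apply a descent-lemma estimate. The one addition is that you also derive the quantitative coercivity bound (\ref{eq: coercivity}) via $a_u\le 1/2$ and $1-e^{-a}\ge e^{-1/2}a$; the paper records that bound in the remark following the lemma rather than in its proof.
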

\begin{proof}
The first two inequalities follow similarly to Lemma~\ref{lem: monotonicity}, so we shall focus on proving the third inequality. The key idea, partially inspired by the proof of Proposition~2.7 in \cite{kato2026entropic}, is to think of the gradient $d_g^t(u)$ as the mean parameter of a certain exponential family. We refer the reader to Chapter 3 in \cite{wainwright2008graphical} as an excellent reference on exponential families. 

Let $\nu_x$ denote the conditional distribution of $Y$ given $X$ and let $\kappa$ denote the marginal distribution of $X$, so that $d\nu(x,y) = d\nu_x(y) d\kappa(x)$.
Fix $t \in \N_0$. For $(u,\theta) \in \calU \times \R^{d_x}$, define 
\[
A_u(\theta) := \varepsilon \log \int_{\calX} e^{\langle \theta,x \rangle/\varepsilon} d\tilde{\kappa}_u(x), 
\]
where $\tilde{\kappa}_u$ is a Borel measure on $\calX$ defined by 
\[
\frac{d\tilde{\kappa}_u}{d\kappa}(x) :=\int_{\calY} e^{(\tildeh^{t+1}(x,y)-c(u,y))/\varepsilon} \, d\nu_x(y). 
\]
Since $A_u(\theta)$ is the (scaled) log-partition function of the exponential family of density functions $\{ x \mapsto e^{(\langle \theta,x\rangle - A_u(\theta))/\varepsilon} : \theta \in \R^{d_x} \}$ with respect to the base measure $\tilde{\kappa}_u$, its gradient and Hessian agree with the mean and $\frac{1}{\varepsilon}$ times the covariance matrix for the probability measure $e^{(\langle \theta,x\rangle - A_u(\theta))/\varepsilon}\, d\tilde{\kappa}_u(x)$; see Proposition~3.1 in \cite{wainwright2008graphical}. In particular, this implies that $A_u(\theta)$ is convex in $\theta$. Since $\tilde{\kappa}_u$ is supported in $\calX$, one obtains
\[
\| \nabla A_u(\theta) \| \le M_x \quad \text{and} \quad \| \nabla^2 A_u(\theta) \|_{\op} \le \frac{M_x^2}{\varepsilon}, \quad \forall \theta \in \R^{d_x}.
\]
Observe now that $\nabla A_u(g^{t}(u)) = d_g^t(u)$, 
which implies that 
\begin{equation}
\label{eq: gradient bound}
\big \| d_g^t \big \|_{\infty} \le M_x.
\end{equation}
Furthermore, as $\| \nabla^2 A_u(z) \|_{\op} \le \frac{M_x^2}{\varepsilon}$, one has
\begin{equation}
\begin{split}
A_u(\tildeg^{t+1}(u)) - A_u(g^t(u)) &\le -\eta \big \| d_g^t(u) \big \|^2 + \frac{\eta^2M_x^2}{2\varepsilon} \big \| d_g^t(u) \big \|^2 \\
&= -\eta \left ( 1-\frac{\eta M_x^2}{2\varepsilon} \right ) \big \| d_g^t(u) \big \|^2 \\
&= -\frac{1}{\eta} \left ( 1-\frac{\eta M_x^2}{2\varepsilon} \right ) \big \| \tildeg^{t+1}(u) - g^{t}(u) \big \|^2, 
\end{split}
\label{eq: slope in g}
\end{equation}
where the far right-hand side is nonpositive for our choice of $\eta$. The desired inequality follows from the observation that
\begin{equation}
D(\tildef^{t+1},\tildeg^{t+1},\tildeh^{t+1}) - D(\tildef^{t+1},g^{t},\tildeh^{t+1}) = \varepsilon \int_{\calU} \big( 1-e^{(A_u(\tildeg^{t+1}(u))-A_u(g^{t}(u)))/\varepsilon}  \big) \, d\mu(u),
\label{eq: dual increase in g}
\end{equation}
where we used the fact that $A_u(g^{t}(u)) = -\tildef^{t+1}(u)$. 
This completes the proof.
\end{proof}
\begin{remark}
Observe that
\[
\frac{\eta}{\varepsilon} \left ( 1-\frac{\eta M_x^2}{2\varepsilon} \right ) \big \| d_g^t(u) \big \|^2 \le \frac{\eta M_x^2}{\varepsilon}\left ( 1-\frac{\eta M_x^2}{2\varepsilon} \right ) \le \frac{1}{2}.
\]
In view of (\ref{eq: slope in g}) and (\ref{eq: dual increase in g}), and using $1-e^{-a} \ge e^{-1/2}a$ for $0 \le a \le 1/2$, one has
\[
D(\tildef^{t+1},\tildeg^{t+1},\tildeh^{t+1}) - D(\tildef^{t+1},g^{t},\tildeh^{t+1}) \ge \frac{e^{-1/2}}{\eta} \left ( 1-\frac{\eta M_x^2}{2\varepsilon} \right) \big \| \tildeg^{t+1}-g^t \big \|_{L^2(\mu)}^2.
\]
Likewise, provided that $0 < \eta <  \frac{2\varepsilon}{M_x^2}$, we have
\[
\begin{split}
\big \| d_g^t \big \|_{L^2(\mu)}^2 &\le \frac{e^{1/2}}{\eta} \left ( 1-\frac{\eta M_x^2}{2\varepsilon} \right)^{-1} \big ( D(\tildef^{t+1},\tildeg^{t+1},\tildeh^{t+1}) - D(\tildef^{t+1},g^{t},\tildeh^{t+1})\big) \\
&\le \frac{e^{1/2}}{\eta} \left ( 1-\frac{\eta M_x^2}{2\varepsilon} \right)^{-1} \big ( D(f^{t+1},g^{t+1},h^{t+1}) - D(f^{t},g^{t},h^{t})\big).
\end{split}
\]
\end{remark}

In what follows, we assume 
\[
0 < \eta <  \frac{2\varepsilon}{M_x^2}.
\]
The following bounds follow similarly to Lemma~\ref{lem: bound on f and h}.
\begin{lemma}
\label{lem: elemenary bounds}
For $t \in \N_0$, 
\begin{equation}
\begin{split}
&\big \| \tildef^{t+1} \big \|_\infty \vee \big \| f^{t+1} \big \|_\infty \le L_c \diam (\calU) + 2\big \|g^t\big \|_{\infty}M_x +\|c\|_\infty - D_0, \\
&\big\| \tildeh^{t+1} \big\|_{\infty} \vee \big\| h^{t+1} \big\|_{\infty}\le \|c\|_\infty +2 \big( \big \|f^{t} \big \|_\infty \vee \big \| \tildef^{t+1} \big \|_\infty \big) + 2\big ( \big\| g^t \big\|_{\infty} \vee \big \| \tildeg^{t+1} \big \|_\infty \big) M_x.
\end{split}
\label{eq: Khat}
\end{equation}
\end{lemma}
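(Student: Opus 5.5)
We follow the proof of Lemma~\ref{lem: bound on f and h}. The key difference is that $\tildef^{t+1}$ is now computed with the \emph{old} $g^t$, which does not satisfy the mean-zero constraint, so the oscillation bound picks up a term in $g^t$. For the \emph{upper} bound on $\tildeh^{t+1}$ we use Jensen's inequality together with $\int f^t\,d\mu=0$ and $\int g^t\,d\mu=0$. This gives $\tildeh^{t+1}(x,y)\le \int c(\cdot,y)\,d\mu\le\|c\|_\infty$, as noted in the remark after Definition~\ref{def:modified-sinkhorn}.

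\emph{Bound on $f$.} Fix $u,u'\in\calU$. Let $\rho_{u'}$ be the probability measure on $\calX\times\calY$ with density proportional to $e^{(\langle g^t(u'),x\rangle+\tildeh^{t+1}-c(u',\cdot))/\varepsilon}$ with respect to $\nu$. Jensen's inequality gives
\[
\tildef^{t+1}(u')-\tildef^{t+1}(u)\ge \int\big(c(u',y)-c(u,y)+\langle g^t(u)-g^t(u'),x\rangle\big)\,d\rho_{u'} \ge -L_c\|u-u'\|-2\|g^t\|_\infty M_x .
\]
Symmetrizing, the oscillation of $\tildef^{t+1}$ is at most $L_c\diam(\calU)+2\|g^t\|_\infty M_x$, which bounds $\|f^{t+1}\|_\infty$. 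It remains to bound $|\int\tildef^{t+1}\,d\mu|$. Monotonicity (Lemma~\ref{lem: monotonicity modified}), weak duality, and $\iota(\tildef^{t+1},g^t,\tildeh^{t+1})=1$ give
\[
D_0\le D(\tildef^{t+1},g^t,\tildeh^{t+1})=\int\tildef^{t+1}\,d\mu+\int\tildeh^{t+1}\,d\nu\le\|c\|_\infty .
\]
Combined with $\int\tildeh^{t+1}\,d\nu\le\|c\|_\infty$, this yields $\int\tildef^{t+1}\,d\mu\ge D_0-\|c\|_\infty$.

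For the reverse inequality we need $\int\tildeh^{t+1}\,d\nu\ge D_0$. Here $\iota(f^t,g^t,\tildeh^{t+1})=1$, since $\tildeh^{t+1}$ is the exact $h$-update, so $D(f^t,g^t,\tildeh^{t+1})=\int\tildeh^{t+1}\,d\nu\ge D_0$ by monotonicity (using $\int f^t\,d\mu=0$). Hence $\int\tildef^{t+1}\,d\mu\le \|c\|_\infty-D_0$. Both one-sided bounds have absolute value at most $\|c\|_\infty-D_0$, which is nonnegative by weak duality. Adding the oscillation bound controls $\|\tildef^{t+1}\|_\infty$ by the first right-hand side of (\ref{eq: Khat}).

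\emph{Bound on $h$.} The upper bound $\tildeh^{t+1}\le\|c\|_\infty$ was noted above. For the lower bound, bounding the exponent in the defining formula gives $\tildeh^{t+1}\ge -\|f^t\|_\infty-\|g^t\|_\infty M_x-\|c\|_\infty$, which is within the stated bound. For $h^{t+1}=\tildeh^{t+1}+\int\tildef^{t+1}\,d\mu+\langle\int\tildeg^{t+1}\,d\mu,x\rangle$, we add $|\int\tildef^{t+1}\,d\mu|\le\|\tildef^{t+1}\|_\infty$ and $\|\int\tildeg^{t+1}\,d\mu\|M_x\le\|\tildeg^{t+1}\|_\infty M_x$. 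Hence $\|h^{t+1}\|_\infty\le \|c\|_\infty+2(\|f^t\|_\infty\vee\|\tildef^{t+1}\|_\infty)+2(\|g^t\|_\infty\vee\|\tildeg^{t+1}\|_\infty)M_x$, with the factor $2$ absorbing both contributions.

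The only delicate point is the two-sided control of $\int\tildef^{t+1}\,d\mu$ without any control on $g$. This is handled by the two intermediate monotonicity inequalities, each of which sits at a point where $\iota=1$.
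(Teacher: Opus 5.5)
Your proposal is correct and follows the paper's proof essentially step for step. It uses the same Jensen-based oscillation bound via the tilted measure $\rho_{u'}$, which picks up the extra $2\|g^t\|_\infty M_x$ term, and the same two monotonicity-plus-weak-duality chains through $D(\tildef^{t+1},g^t,\tildeh^{t+1})$ and $D(f^t,g^t,\tildeh^{t+1})$ to get $|\int \tildef^{t+1}\,d\mu| \le \|c\|_\infty - D_0$; your explicit treatment of the $h$-bound (Jensen for the upper bound, bounding the exponent for the lower bound, then the triangle inequality for the normalization shift) simply spells out what the paper summarizes as ``follows from the definition and Jensen's inequality.''
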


\begin{proof}
The bound on $\big \| \tildeh^{t+1} \big \|_{\infty} \vee \big \| h^{t+1} \big \|_{\infty}$ follows from the definition and Jensen's inequality. 
Pick any $u,u' \in \calU$, and define a probability measure $\rho_{u'}$ on $\calX \times \calY$ as
\begin{equation*}
d\rho_{u'}(x,y) \propto \exp\left(\frac{\langle g^{t}(u'), x \rangle +\tildeh^{t+1}(x,y) -c(u',y)}{\varepsilon}\right) \,d\nu(x,y).
\end{equation*}
Arguing as in the first part of the proof of Lemma~\ref{lem: bound on f and h}, we have
\begin{equation*}
\begin{split}
&-\tildef^{t+1}(u) + \tildef^{t+1}(u') \\
&\ge  \int \big (c(u',y) - c(u,y) + \langle g^{t}(u)-g^{t}(u'),x \rangle \big) \, d\rho_{u'}(x,y) \\
&\ge -L_c \diam (\calU) - 2\big \|g^t\big \|_{\infty} M_x. 
\end{split}
\end{equation*}
Interchanging the roles of $u$ and $u'$ and using the normalization $\int f^{t+1} \, d\mu = 0$, we have 
\begin{equation*}
\big \| f^{t+1} \big \|_\infty \le L_c \diam (\calU) + 2\big \|g^t\big \|_{\infty}M_x.
\end{equation*}

Next, by Lemma~\ref{lem: monotonicity modified} and weak duality, we have
\begin{equation*}
\begin{split}
&D_0 \le D(\tildef^{t+1},g^{t},\tildeh^{t+1}) = \int \tildef^{t+1} \, d\mu + \int \tildeh^{t+1} \, d\nu \le \|c\|_\infty, \quad \text{and} \\
&D_0 \le D(f^{t},g^{t},\tildeh^{t+1}) = \int \tildeh^{t+1} \, d\nu \le \|c\|_\infty. 
\end{split}
\end{equation*}
Combining these, we have $|\int \tildef^{t+1} \, d\mu| \le \|c\|_\infty-D_0$, which in turn yields the desired bound on $\big \| \tildef^{t+1}\big \|_\infty$. 
\end{proof}

Finding bounds on $g^{t+1}$ is considerably more complicated. Observe that $\int \tildeg^{t+1} \, d\mu = -\eta \int d_g^t \, d\mu =: -\eta v^t$ and 
\begin{equation}
\begin{split}
\big \| g^{t+1}(u) \big \|^2 &= \big \| g^t(u) - \eta (d_g^t(u)-v^t)\big \|^2 \\
&=\big \| g^t(u) \big \|^2 -2\eta \big \langle g^t(u),d_g^t(u) \big \rangle + 2\eta \big \langle g^t(u),v^t \big \rangle + \eta^2 \big \|d_g^t(u) - v^t\big \|^2 \\
&\le \big \| g^t(u) \big \|^2 -2\eta \big \langle g^t(u),d_g^t(u) \big \rangle + 2\eta \big \| g^t(u) \big \| \big \| v^t\big \| +  4\eta^2 M_x^2,
\end{split}
\label{eq: norm bound g}
\end{equation}
where we used inequality (\ref{eq: gradient bound}). One needs to find a lower bound on the inner product, $\big \langle g^t(u),d_g^t(u) \big \rangle$, which is given next.  

\begin{lemma}
For $t \in \N_0$,
\[
\big \langle g^t(u),d_g^t(u) \big \rangle \ge \delta \big \| g^t(u) \big \| - \underbrace{\left  ( 5\|c\|_\infty -\frac{3}{2}D_0 + \varepsilon \log \left ( \frac{3}{2} \right )\right )}_{= K_g^{\star \star}}, \quad \forall u \in \calU,
\]
where $\delta := 1/(2\|\Sigma_X^{-1}\|_{\op}M_x)$. 
\end{lemma}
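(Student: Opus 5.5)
The plan is to use the exponential-family structure from the proof of Lemma~\ref{lem: monotonicity modified}. There, $d_g^t(u)=\nabla A_u(g^t(u))$, and $A_u$ is convex on $\R^{d_x}$. Convexity, applied between the points $\theta=g^t(u)$ and $0$, gives $A_u(0)\ge A_u(\theta)+\langle \nabla A_u(\theta),-\theta\rangle$, that is,
\[
\big\langle g^t(u),d_g^t(u)\big\rangle \ge A_u(g^t(u))-A_u(0).
\]
So the task splits into an upper bound on $A_u(0)$ and a lower bound on $A_u(\theta)$ that is linear in $\|\theta\|$ with slope $\delta$.

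For the upper bound, recall $A_u(0)=\varepsilon\log\int e^{(\tildeh^{t+1}(x,y)-c(u,y))/\varepsilon}\,d\nu$. By Jensen's inequality and the normalization $\int f^t\,d\mu=0$, $\int g^t\,d\mu=0$ (this is the point of the remark after Definition~\ref{def:modified-sinkhorn}), we have $\tildeh^{t+1}\le\|c\|_\infty$. Since $c\ge 0$, it follows that $A_u(0)\le\|c\|_\infty$.

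For the lower bound, I reuse the tilted measures $q^{(v)}$ from the proof of Proposition~\ref{prop: potential bound}, namely $dq^{(v)}=(1+\delta\langle\Sigma_X^{-1}v,x\rangle)\,d\nu$. Each is a probability measure with $\int x\,dq^{(v)}=\delta v$ and density in $[1/2,3/2]$. The Gibbs variational inequality, equivalently Jensen's inequality under $q^{(v)}$ (or inequality (\ref{eq: exponential}) as in the paper), gives
\[
A_u(\theta)\ge \int\big(\langle\theta,x\rangle+\tildeh^{t+1}-c(u,\cdot)\big)\,dq^{(v)}-\varepsilon\KL{q^{(v)}}{\nu}\ge \delta\langle\theta,v\rangle+\int\tildeh^{t+1}\,dq^{(v)}-\|c\|_\infty-\varepsilon\log\tfrac32 .
\]
Choosing $v=\theta/\|\theta\|$ produces the term $\delta\|g^t(u)\|$. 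If $\theta=0$, the claim is trivial once $K_g^{\star\star}\ge 0$, which follows from $D_0\le\|c\|_\infty$.

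The remaining, mildly delicate step is to bound $\int\tildeh^{t+1}\,dq^{(v)}$ from below. Since $dq^{(v)}/d\nu\le 3/2$, we have $\int\tildeh^{t+1}\,dq^{(v)}\ge-\tfrac32\|\tildeh^{t+1}\|_{L^1(\nu)}$. From the proof of Lemma~\ref{lem: elemenary bounds}, via Lemma~\ref{lem: monotonicity modified} and weak duality, $\int\tildeh^{t+1}\,d\nu=D(f^t,g^t,\tildeh^{t+1})\ge D_0$. Together with $\tildeh^{t+1}\le\|c\|_\infty$, this gives
\[
\|\tildeh^{t+1}\|_{L^1(\nu)}=2\int(\tildeh^{t+1})^+d\nu-\int\tildeh^{t+1}d\nu\le 2\|c\|_\infty-D_0 .
\]
Collecting the bounds,
\[
\big\langle g^t(u),d_g^t(u)\big\rangle\ge\delta\|g^t(u)\|-\Big(\|c\|_\infty+\|c\|_\infty+\tfrac32(2\|c\|_\infty-D_0)+\varepsilon\log\tfrac32\Big),
\]
and the bracket equals $K_g^{\star\star}$. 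The case $t=0$ needs no separate treatment, because these bounds on $\tildeh^{t+1}$ hold for every $t\in\N_0$.
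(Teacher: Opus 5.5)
Your proposal is correct and follows the paper's proof essentially step for step. It uses convexity of $A_u$ between $g^t(u)$ and $0$, the bound $A_u(0)\le\sup\tildeh^{t+1}\le\|c\|_\infty$, the tilted measures $q^{(v)}$ to obtain $A_u(\theta)\ge\delta\|\theta\|-(4\|c\|_\infty-\tfrac32 D_0+\varepsilon\log\tfrac32)$ (your Gibbs variational inequality is exactly the paper's use of inequality (\ref{eq: exponential})), and the $L^1(\nu)$ bound on $\tildeh^{t+1}$ coming from $\int\tildeh^{t+1}\,d\nu\ge D_0$, which uses the standing step-size assumption $\eta<2\varepsilon/M_x^2$ through Lemma~\ref{lem: monotonicity modified}.
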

\begin{proof}
Pick any $u \in \calU$. We follow the notation used in the proof of Lemma~\ref{lem: monotonicity modified}.
Recall from the proof of Lemma~\ref{lem: monotonicity modified} that $d_g^t (u) = \nabla A_u(g^t(u))$, so that 
\begin{equation}
\big \langle g^t(u),d_g^t(u) \big \rangle  = \big \langle  g^t(u),\nabla A_u(g^t(u)) \big \rangle. 
\label{eq: inner product}
\end{equation}
Pick any $\theta \in \R^{d_x}$. Since $A_u(\theta)$ is convex in $\theta$,
\begin{equation}
\langle \theta,\nabla A_u(\theta) \rangle \ge A_u(\theta)-A_u(0). 
\label{eq: initial lower bound}
\end{equation}
Observe that
\[
A_u(0) = \varepsilon \log \int_{\calX \times \calY} e^{(\tildeh^{t+1}(x,y)-c(u,y))/\varepsilon} \, d\nu(x,y) \le \sup_{\calX \times \calY}\tildeh^{t+1} \le \| c \|_\infty,
\]
where the last inequality follows from the definition of $\tildeh^{t+1}$ and Jensen's inequality. As such, for any $\theta \in \R^{d_x}$,
\[
\langle \theta,\nabla A_u(\theta) \rangle \ge A_u(\theta)-\| c \|_\infty. 
\]
To find a lower bound on $A_u(\theta)$, as in the proof of Proposition~\ref{prop: potential bound}, for any $v \in \mathbb{S}^{d_x-1}$, define a probability measure $q^{(v)}$ on $\calX \times \calY$ by 
\begin{equation*}
dq^{(v)}(x,y) = e^{\hat{p}_v(x,y)}\,d\nu(x,y) \quad \text{with} \quad \hat{p}_v(x,y) = \log \big(1 + \delta \langle \Sigma_X^{-1}v, x \rangle \big).
\end{equation*}
Consider a function $p_u(x,y)$ on $\calX \times \calY$ defined by
\[
p_u(x,y) = \frac{1}{\varepsilon} \big ( \langle \theta,x\rangle + \tildeh^{t+1}(x,y) - c(u,y) - A_u(\theta) \big). 
\]

Now, using inequality (\ref{eq: exponential}), we obtain
\[
0 = \int (e^{p_u} - e^{\hat{p}_v}) \, d \nu \ge \int (p_u - \hat{p}_v) \, dq^{(v)}. 
\]
Multiplying both sides by $\varepsilon$ and using the fact that $\int_{\calX \times \calY}x \, dq^{(v)}(x,y)=\delta v$, we have 
\[
\begin{split}
0 &\ge \delta \langle \theta,v \rangle + \int \tildeh^{t+1} \, dq^{(v)} - A_u(\theta) - \|c\|_\infty  - \varepsilon \int \hat{p}_v \, dq^{(v)} \\
&\ge \delta \langle \theta,v \rangle  + \int \tildeh^{t+1} \, dq^{(v)}  - A_u(\theta) - \|c\|_\infty - \varepsilon \log \left ( \frac{3}{2} \right ).
\end{split}
\]
Rearranging terms, we conclude 
\begin{equation}
A_u(\theta) \ge \delta \langle \theta,v \rangle  + \int \tildeh^{t+1} \, dq^{(v)}  - \|c\|_\infty - \varepsilon \log \left ( \frac{3}{2} \right ).
\label{eq: lower bound A}
\end{equation}
Finally, we need to find a lower bound on $\int \tildeh^{t+1} \, dq^{(v)}$. By the definition of $q^{(v)}$, 
\[
\begin{split}
\int \tildeh^{t+1} \, dq^{(v)} &\ge - \big \| \tildeh^{t+1} \big \|_{L^1(q^{(v)})} \ge -\frac{3}{2} \big \| \tildeh^{t+1}\big \|_{L^1(\nu)} \\
&\ge -\frac{3}{2} \left ( 2\int (\tildeh^{t+1})^+ d\nu - \int \tildeh^{t+1} \, d\nu \right ) \\
&\ge - \frac{3}{2}(2\|c\|_\infty - D_0),
\end{split}
\]
where we used $\tildeh^{t+1} \le \|c\|_\infty$, which follows from Jensen's inequality, and $\int \tildeh^{t+1} \, d\nu = D(f^t,g^t,\tildeh^{t+1}) \ge D_0$, which follows from Lemma~\ref{lem: monotonicity modified}. Plugging this lower bound into (\ref{eq: lower bound A}) and choosing $v = \theta/\| \theta \|$ yields
\begin{equation}
A_u(\theta) \ge \delta \| \theta \|  - \left  ( 4\|c\|_\infty -\frac{3}{2}D_0 + \varepsilon \log \left ( \frac{3}{2} \right )\right ). 
\label{eq: lower bound A 2}
\end{equation}
Combining (\ref{eq: inner product}), (\ref{eq: initial lower bound}), and (\ref{eq: lower bound A 2}) gives the desired result. 
\end{proof}

Given the preceding lemma, an induction argument leads to the following bound on $\big \| \tildeg^{t+1} \big \|_{\infty} \vee \big \| g^{t+1} \big \|_{\infty}$. 

\begin{lemma}
\label{lem: bounds on g modified}
For $t \in \N_0$, 
\[
\begin{split}
\big \| \tildeg^{t+1} \big \|_{\infty} \vee \big \| g^{t+1} \big \|_{\infty} &\le  \eta M_x + \left [ \| g^0 \|_\infty \vee \left ( \frac{2 K_g^{\star \star}+6\eta (1 \vee \delta) M_x^2}{\delta} \right )\right] \\
&\quad + \frac{2e^{1/2}}{\delta} \left ( 1-\frac{\eta M_x^2}{2\varepsilon} \right)^{-1} \left ( 1+\frac{2M_x}{\delta}  \right )(\bar{D}-D_0) =K_g^{\star}.
\end{split}
\]
\end{lemma}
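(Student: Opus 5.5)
The plan is to fix $u \in \calU$, set $a_t := \|g^t(u)\|$, and run an induction on $t$ starting from the one-step recursion (\ref{eq: norm bound g}). The only information about $g^t$ needed there is the preceding lemma, $\langle g^t(u),d_g^t(u)\rangle \ge \delta a_t - K_g^{\star\star}$. Plugging it into (\ref{eq: norm bound g}) gives
\[
a_{t+1}^2 \le a_t^2 - 2\eta\big(\delta - \|v^t\|\big)a_t + 2\eta K_g^{\star\star} + 4\eta^2 M_x^2 .
\]
Since $\tildeg^{t+1} = g^{t+1} - \eta v^t$ and $\|v^t\| \le \|d_g^t\|_\infty \le M_x$ by (\ref{eq: gradient bound}), a bound on $\|g^{t+1}\|_\infty$ gives the bound on $\|\tildeg^{t+1}\|_\infty$ at the cost of an additive $\eta M_x$. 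This accounts for the leading $\eta M_x$ in $K_g^\star$.

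The first ingredient controls the drift term $\|v^t\|$ globally in time. By Jensen, $\|v^t\| \le \|d_g^t\|_{L^2(\mu)}$. The remark following Lemma~\ref{lem: monotonicity modified} gives $\|d_g^t\|_{L^2(\mu)}^2 \le \frac{e^{1/2}}{\eta}(1-\frac{\eta M_x^2}{2\varepsilon})^{-1}(D_{t+1}-D_t)$, where $D_t := D(f^t,g^t,h^t)$. Telescoping and using $D_t \le \bar D$ then yields
\[
\sum_{t\ge 0}\|v^t\|^2 \le \frac{e^{1/2}}{\eta}\Big(1-\frac{\eta M_x^2}{2\varepsilon}\Big)^{-1}(\bar D - D_0).
\]
I then call a step $t$ \emph{bad} if $\|v^t\| > \delta/2$ and \emph{good} otherwise. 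By Chebyshev, the number of bad steps is at most $\frac{4}{\delta^2}\sum_t\|v^t\|^2$.

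The second ingredient is a case analysis on good and bad steps, with threshold $R := (2K_g^{\star\star} + 6\eta(1\vee\delta)M_x^2)/\delta$.
\begin{itemize}
\item On a good step with $a_t \ge R$, the recursion gives $a_{t+1}^2 \le a_t^2 - \eta\delta a_t + 2\eta K_g^{\star\star} + 4\eta^2M_x^2 \le a_t^2$, so $a_{t+1} \le a_t$.
\item On a good step with $a_t < R$, one has $a_{t+1} \le a_t + 2\eta M_x$, since $\|d_g^t - v^t\| \le 2M_x$. The slack $6\eta(1\vee\delta)M_x^2/\delta$ built into $R$ should absorb this overshoot, so that $a_{t+1}$ stays below $R$ plus the extra terms.
\item On a bad step, I use the crude bound $a_{t+1} \le a_t + 2\eta M_x$.
\end{itemize}
An induction on $t$ then shows that $a_{t+1}$ is at most $\max\{a_0, R\}$ plus $2\eta M_x$ times the number of bad steps so far. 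This gives a bound of the form $\|g^0\|_\infty \vee R + \frac{8 M_x e^{1/2}}{\delta^2}(1-\frac{\eta M_x^2}{2\varepsilon})^{-1}(\bar D - D_0)$.

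A finer route, which I expect is the one that produces the exact constant $\frac{2e^{1/2}}{\delta}(1+\frac{2M_x}{\delta})$, is to avoid counting bad steps. Instead, in the region $a_t \ge R$, I would divide the recursion by $a_{t+1}+a_t$ to obtain a linear increment bound $a_{t+1} - a_t \le \eta\|v^t\| - \eta\delta/2$, or something close to it. Then $\eta\|v^t\| \le \eta\delta/2 + \eta\|v^t\|^2/(\delta/2)$, so the increments are dominated by a summable sequence of size $\frac{2\eta}{\delta}\|v^t\|^2$. On bad steps, the jumps of size at most $2\eta M_x \le \frac{4\eta M_x}{\delta^2}\|v^t\|^2$ contribute the $2M_x/\delta$ part. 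I would track a running ``excess over $\max\{\|g^0\|_\infty,R\}$'' and show that it never exceeds $\sum_s(\text{these summable terms})$.

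The main obstacle is exactly this bookkeeping. One must make sure that excursions above $R$ starting from the region below $R$ are correctly charged: the overshoot $2\eta M_x$ goes into $R$ and the $\eta M_x$ term, while growth while above $R$ is charged only to the summable $\|v^t\|^2$ terms. I would set up the induction hypothesis as
\[
a_s \le \max\{\|g^0\|_\infty,R\} + \sum_{r<s}c\,\eta\|v^r\|^2\Big(\frac1\delta+\frac{M_x}{\delta^2}\Big)
\]
and verify it separately in the three cases above.
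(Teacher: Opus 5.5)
Your skeleton is the paper's proof. It uses the recursion obtained from (\ref{eq: norm bound g}) and the inner-product lemma, and splits steps according to whether $\|v^t\|>\delta/2$. It uses the bound $\sum_t\|v^t\|^2\le\zeta(\bar D-D_0)$ from the remark after Lemma~\ref{lem: monotonicity modified}, with $\zeta:=\frac{e^{1/2}}{\eta}(1-\frac{\eta M_x^2}{2\varepsilon})^{-1}$. It adds the extra $\eta M_x$ for $\tildeg^{t+1}=g^{t+1}-\eta v^t$. However, the argument you actually complete does not give $K_g^\star$, for two reasons.

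\emph{The case split is at the wrong threshold.} With the split at $R$, a good step from $a_t<R$ only gives $a_{t+1}<R+2\eta M_x$. That breaks your induction hypothesis $a_{t+1}\le\max\{\|g^0\|_\infty,R\}+\cdots$. You cannot push this overshoot into the leading $\eta M_x$, because that term is already used to pass from $g^{t+1}$ to $\tildeg^{t+1}$. The paper splits at $R_0:=(2K_g^{\star\star}+4\eta M_x^2)/\delta$ instead. The decrease $a_{t+1}\le a_t$ already holds on good steps with $a_t>R_0$. From $a_t\le R_0$ one gets $a_{t+1}\le R_0+2\eta M_x\le R$, using $M_x\ge1$; this is exactly what the slack in $R$ is for.

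\emph{The crude bad-step charge gives the wrong constant.} Charging every bad step $2\eta M_x$ puts the coefficient $8M_x/\delta^2$ in front of $e^{1/2}(1-\frac{\eta M_x^2}{2\varepsilon})^{-1}(\bar D-D_0)$. The lemma has $\frac{2}{\delta}+\frac{4M_x}{\delta^2}$ there. Since $2M_x/\delta=4\|\Sigma_X^{-1}\|_{\op}M_x^2\ge4$ (because $\underline{\lambda}\le M_x^2$), your constant is strictly larger whenever $\bar D>D_0$. So your bound does not imply the lemma.

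The exact constant needs nothing beyond what you already have. On a bad step, keep the sharper jump $a_{t+1}\le a_t+\eta(M_x+\|v^t\|)$. Bound the number of bad steps by $\frac{4\zeta}{\delta^2}(\bar D-D_0)$. Bound $\sum_{\text{bad}}\|v^t\|\le\frac{2\zeta}{\delta}(\bar D-D_0)$, by Cauchy--Schwarz or simply because $\|v^t\|<\frac{2}{\delta}\|v^t\|^2$ on bad steps. Multiplying by $\eta$ gives precisely the last term of $K_g^\star$.

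Your ``finer route'' is not what produces that constant. As you set it up (threshold $R$, overshoot split between $R$ and $\eta M_x$), it has the same defect. It can be closed, though, and is then sharper:
\begin{itemize}
\item Split at $R_0$.
\item If $a_t\ge R_0$ and $a_{t+1}>a_t$, divide by $a_{t+1}+a_t>2a_t$. This gives $a_{t+1}-a_t\le\eta(\|v^t\|-\delta/2)\le\frac{\eta}{2\delta}\|v^t\|^2$, for good and bad steps alike.
\item From $a_t<R_0$, any step lands below $R$.
\end{itemize}
This yields $\|g^{t+1}\|_\infty\le\max\{\|g^0\|_\infty,R\}+\frac{e^{1/2}}{2\delta}(1-\frac{\eta M_x^2}{2\varepsilon})^{-1}(\bar D-D_0)$, with no separate charge for bad-step jumps, and that implies the lemma. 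As submitted, however, this bookkeeping is left open, so the proposal does not establish the stated bound.
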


\begin{remark}
As a consequence, in view of Lemma~\ref{lem: elemenary bounds}, we have
\[
\begin{split}
&\big \| \tildef^{t+1} \big \|_\infty \vee \big \| f^{t+1} \big \|_\infty \le L_c \diam (\calU) + 2K_g^{\star}M_x + \|c\|_\infty - D_0 = K_f^{\star}, \\
&\big \| \tildeh^{t+1} \big \|_\infty \vee \big \| h^{t+1} \big \|_\infty \le \|c\|_\infty + 2K_f^{\star}+2K_g^{\star}M_x = K_h^{\star}.
\end{split}
\]
\end{remark}
\begin{proof}
We divide the proof into two steps. Recall $v^t = \int d_g^t \, d\mu$, so that $g^{t+1} = \tilde{g}^{t+1} + \eta v^t=g^{t} - \eta (d_g^t-v^t)$. Pick any $u \in \calU$.

\medskip

\textbf{Step 1.}
Combining the preceding lemma and the inequality in (\ref{eq: norm bound g}), one has
\begin{equation}
\big \| g^{t+1}(u) \big \|^2 \le \big \| g^t (u) \big \|^2 - 2\eta (\delta - \big \| v^t \big \|) \big \|g^t(u)\big \| + 2\eta K_g^{\star \star}+4\eta^2M_x^2. 
\label{eq: recursive}
\end{equation}
We consider the following three cases:

\medskip

\textbf{Case 1: $\big \| v^t \big \| \le \delta/2$ and $\big \|g^t(u)\big \| > \frac{2 K_g^{\star \star}+4\eta M_x^2}{\delta}$.} In this case, by inequality (\ref{eq: recursive}), we have 
\[
\big \| g^{t+1} (u) \big \| \le \big \| g^t(u) \big \|.
\]

\medskip

\textbf{Case 2: $\big \| v^t \big \| \le \delta/2$ and $\big \|g^t(u)\big \| \le \frac{2 K_g^{\star \star}+4\eta M_x^2}{\delta}$.} In this case, we have
\[
\big \| g^{t+1}(u) \big \| \le \big \| g^t(u) \big \| + \eta \big \| d_g^t(u)-v^t\big \| \le \frac{2 K_g^{\star \star}+4\eta M_x^2}{\delta} + 2\eta M_x \le \frac{2 K_g^{\star \star}+6\eta (1 \vee \delta) M_x^2}{\delta},
\]
where we used inequality (\ref{eq: gradient bound}). 

\medskip

\textbf{Case 3: $\big \| v^t \big \| > \delta/2$.} In this case, we have 
\[
\big \| g^{t+1} (u) \big \| \le \big \| g^{t}(u) \big \| + \eta (M_x+\big \| v^t \big \|). 
\]

\medskip

By induction, in general, we obtain
\[
\big \| g^{t+1}(u) \big \| \le \| g^0 \|_\infty \vee \left ( \frac{2 K_g^{\star \star}+6\eta (1 \vee \delta) M_x^2}{\delta} \right ) + \sum_{t': \| v^{t'} \| > \delta/2} \eta \big(M_x+\big \|v^{t'}\big \|\big). 
\]

\textbf{Step 2.} In this step, we shall find upper bounds on $\sum_{t: \| v^t\| > \delta/2} 1$ and $\sum_{t: \| v^t\| > \delta/2} \big \| v^t \big \|$. 
Recall from the remark following Lemma~\ref{lem: monotonicity modified} that
\[
\big \| v^t \big \|^2 \le \big \| d_g^t \big \|_{L^2(\mu)}^2 \le \underbrace{\frac{e^{1/2}}{\eta} \left ( 1-\frac{\eta M_x^2}{2\varepsilon} \right)^{-1}}_{=:\zeta} \big ( D(f^{t+1},g^{t+1},h^{t+1}) - D(f^{t},g^{t},h^{t})\big),
\]
where the first inequality follows from Jensen's inequality. Hence,
\[
\sum_{t \le T}\big \|v^t\big \|^2 \le \zeta \big ( D(f^{T+1},g^{T+1},h^{T+1})-D_0\big) \le \zeta (\bar{D}-D_0)
\]
for any finite $T \in \N_0$. Sending $T \to \infty$, we have 
\[
\sum_{t \in \N_0}\big \|v^t\big \|^2 \le \zeta (\bar{D}-D_0). 
\]
This inequality yields
\[
\begin{split}
\sum_{t: \| v^t \| > \delta/2} 1 &\le \frac{4}{\delta^2} \sum_{t: \| v^t \| > \delta/2} \big \| v^t \big \|^2 \le \frac{4\zeta}{\delta^2} (\bar{D}-D_0), \\
\sum_{t: \| v^t \| > \delta/2} \big \| v^t \big \| &\le \left (\sum_{t: \| v^t \| > \delta/2} 1 \right)^{1/2} \left ( \sum_{t: \| v^t \| > \delta/2} \big \| v^t \big \|^2 \right)^{1/2} \le  \frac{2\zeta}{\delta} (\bar{D}-D_0),
\end{split}
\]
where we used the Cauchy-Schwarz inequality to deduce the second inequality. As such, we obtain
\[
\sum_{t: \| v^t\| > \delta/2} \eta\big(M_x+\big \|v^t\big \|\big) \le \frac{2\eta \zeta}{\delta} \left ( 1+\frac{2M_x}{\delta}  \right )(\bar{D}-D_0). 
\]
The desired conclusion follows from combining Steps 1 and 2. 
\end{proof}

\subsection{Proof of Theorem \ref{thm: linear conv modified}}
Given Proposition~\ref{prop: sinkhorn potential modified}, the proof of Theorem \ref{thm: linear conv modified} is largely similar to that of Theorem \ref{thm: linear conv}.
Recall
\[
\bar{K}^{\star} = K_f^{\star}+K_g^{\star}M_x + K_h^{\star} + \| c \|_\infty.
\]
As before, let 
\[
(\ell_f^t,\ell_g^t,\ell_h^t) := \big (\ell_f(f^t,g^t,h^t), \ell_g(f^t,g^t,h^t),\ell_h(f^t,g^t,h^t) \big).
\]
The following PL inequality follows readily from Propositions \ref{prop: PL} and \ref{prop: sinkhorn potential modified}.
\begin{lemma}[PL inequality]
\label{lem: PL modified}
For $t \in \N_0$, we have 
\[
\big \| (\ell_f^t,\ell_g^t,\ell_h^t) \big \|_{\calH}^2 \ge \frac{2(1 \wedge \underline{\lambda})e^{-\bar{K}^{\star}/\varepsilon}}{\varepsilon} \big (\bar{D} - D(f^t,g^t,h^t) \big). 
\]
\end{lemma}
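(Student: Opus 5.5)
The plan is to apply Proposition~\ref{prop: PL} directly at the point $(f,g,h)=(f^t,g^t,h^t)$ with $K=\bar{K}^{\star}$. Two things have to be checked: that this triplet is admissible, and that both pointwise lower bounds in Proposition~\ref{prop: PL} hold with this $K$.

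\emph{Admissibility.} By construction of the normalization step in Definition~\ref{def:modified-sinkhorn}, $\int f^t\,d\mu=0$ and $\int g^t\,d\mu=0$ for all $t\in\N_0$. For $t=0$ this holds because the initialization lies in $\calC_\diamond(\calU)\times\calC_\diamond(\calU;\R^{d_x})\times\calC(\calX\times\calY)$. The iterates are continuous: $\tildeh^{t+1}$ and $\tildef^{t+1}$ are log-integrals of continuous integrands over compact sets, $d_g^t$ is continuous in $u$ by dominated convergence, and normalization preserves continuity. Hence $(f^t,g^t,h^t)\in \calC_\diamond(\calU)\times\calC_\diamond(\calU;\R^{d_x})\times\calC(\calX\times\calY)$.

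\emph{Lower bound on the iterate.} For $t\ge1$, Proposition~\ref{prop: sinkhorn potential modified} gives $\|f^t\|_\infty\le K_f^\star$, $\|g^t\|_\infty\le K_g^\star$ and $\|h^t\|_\infty\le K_h^\star$. For $t=0$ the same bounds hold by the standing convention $K_f^\star\ge\|f^0\|_\infty$, $K_g^\star\ge\|g^0\|_\infty$, $K_h^\star\ge\|h^0\|_\infty$. Using $|\langle g^t(u),x\rangle|\le K_g^\star M_x$ and $0\le c\le\|c\|_\infty$, we get
\[
f^t(u)+\langle g^t(u),x\rangle+h^t(x,y)-c(u,y)\ge -\big(K_f^\star+K_g^\star M_x+K_h^\star+\|c\|_\infty\big)=-\bar{K}^\star
\]
for all $(u,x,y)\in\calU\times\calX\times\calY$.

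\emph{Lower bound at the optimum.} The same computation applies to $(\barf,\barg,\barh)$, using the convention that $K_f^\star,K_g^\star,K_h^\star$ dominate $\|\barf\|_\infty,\|\barg\|_\infty,\|\barh\|_\infty$. This convention is legitimate because finite bounds on these norms are supplied by Proposition~\ref{prop: potential bound}.

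Plugging $K=\bar{K}^\star$ into Proposition~\ref{prop: PL} yields the claimed inequality with constant $2(1\wedge\underline{\lambda})e^{-\bar{K}^\star/\varepsilon}/\varepsilon$ and $\bar{D}=D(\barf,\barg,\barh)$.

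There is no real obstacle here. The only point needing care is that the uniform-in-time bounds of Proposition~\ref{prop: sinkhorn potential modified} require $0<\eta<2\varepsilon/M_x^2$, which is the standing assumption in this section. It is also worth recording that the $t=0$ case relies on the convention rather than on the proposition itself.
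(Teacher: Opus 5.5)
Your proposal is correct and matches the paper's argument. The paper simply notes that the lemma follows from Proposition~\ref{prop: PL} with $K=\bar{K}^{\star}$, combined with the uniform bounds of Proposition~\ref{prop: sinkhorn potential modified} and the standing convention that the constants $K_f^\star, K_g^\star, K_h^\star$ dominate the sup-norms of $(\barf,\barg,\barh)$ and $(f^0,g^0,h^0)$; your write-up just makes the admissibility check and the $t=0$ case explicit.
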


Next, we establish slope ascent conditions. We set
\[
\alpha^{\star} =\frac{e^{-2K_h^{\star}/\varepsilon}}{2\varepsilon} \bigwedge \left (  \frac{e^{-1/2}}{M_x^2\eta} \left ( 1-\frac{\eta M_x^2}{2\varepsilon} \right) \right ) \quad \text{and} \quad \beta^{\star} = \left ( \frac{2}{\underline{\lambda}\eta^2} + \frac{5M_x^2e^{2\bar{K}^{\star}/\varepsilon}}{\varepsilon^2} \right )^{-1/2}.
\]

\begin{lemma}[Slope-ascent conditions]
\label{lem: slope ascent modified}
For $t \in \N_0$, we have 
\begin{align*}
&D(f^{t+1},g^{t+1},h^{t+1}) - D(f^t,g^t,h^t) \\
&\quad \ge \alpha^{\star} \big ( \big \|f^t-\tildef^{t+1}\big \|_{L^2(\mu)}^2 + \big \| \langle g^t-\tildeg^{t+1},x \rangle \big \|_{L^2(\mu \otimes \nu)}^2+\big \|h^t-\tildeh^{t+1}\big \|_{L^2(\nu)}^2 \big ) \\
\intertext{and}
&\sqrt{\big \|\tildef^{t+1} - f^t\big \|_{L^2(\mu)}^2 +\big \| \langle \tildeg^{t+1} - g^t,x\rangle \big \|_{L^2(\mu \otimes \nu)}^2} \ge \beta^{\star} \big \| (\ell_f^{t+1},\ell_g^{t+1}, \ell_h^{t+1})\big\|_{\calH}. 
\end{align*}
\end{lemma}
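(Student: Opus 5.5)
For the first inequality, I would split the increment along the three sub-steps of Lemma~\ref{lem: monotonicity modified}:
\[
D(f^{t+1},g^{t+1},h^{t+1})-D(f^t,g^t,h^t) = I_h + I_f + I_g,
\]
where $I_h := D(f^t,g^t,\tildeh^{t+1})-D(f^t,g^t,h^t)$, $I_f := D(\tildef^{t+1},g^t,\tildeh^{t+1})-D(f^t,g^t,\tildeh^{t+1})$, and $I_g := D(\tildef^{t+1},\tildeg^{t+1},\tildeh^{t+1})-D(\tildef^{t+1},g^t,\tildeh^{t+1})$. Each piece is nonnegative.

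\textbf{The $h$ and $f$ pieces.} As in Lemma~\ref{lem: ascent}, $I_h$ and $I_f$ equal $\varepsilon\int (e^{a}-1-a)$ with $a=(h^t-\tildeh^{t+1})/\varepsilon$, resp.\ $a=(f^t-\tildef^{t+1})/\varepsilon$. By Proposition~\ref{prop: sinkhorn potential modified}, $|a|\le 2K_h^\star/\varepsilon$ (resp.\ $2K_f^\star/\varepsilon \le 2K_h^\star/\varepsilon$). Inequality (\ref{eq: exponential 2}) then gives the factor $e^{-2K_h^\star/\varepsilon}/(2\varepsilon)$ times $\|h^t-\tildeh^{t+1}\|^2_{L^2(\nu)}$ and $\|f^t-\tildef^{t+1}\|^2_{L^2(\mu)}$.

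\textbf{The $g$ piece.} By the remark after Lemma~\ref{lem: monotonicity modified},
\[
I_g \ge \frac{e^{-1/2}}{\eta}\Bigl(1-\frac{\eta M_x^2}{2\varepsilon}\Bigr)\|\tildeg^{t+1}-g^t\|_{L^2(\mu)}^2 .
\]
Since $\|\langle g,x\rangle\|_{L^2(\mu\otimes\nu)}^2\le M_x^2\|g\|^2_{L^2(\mu)}$, this is at least the second entry of $\alpha^\star$ times $\|\langle g^t-\tildeg^{t+1},x\rangle\|^2$. Summing the three pieces gives the first inequality.

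For the second inequality, I would express the gradients at step $t+1$ in terms of the un-normalized iterates. The gradients are invariant under the normalization shift: the density $e^{(f+\langle g,x\rangle+h-c)/\varepsilon}$ is unchanged, and $\ell_g$ does not depend on the shift. So $(\ell_f^{t+1},\ell_g^{t+1},\ell_h^{t+1})$ equals the gradient at $(\tildef^{t+1},\tildeg^{t+1},\tildeh^{t+1})$. I would then compare each component with a reference quantity that vanishes.

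\textbf{Comparing $\ell_f$ and $\ell_h$.} Use that $\ell_f(\tildef^{t+1},g^t,\tildeh^{t+1})=0$ by the definition of $\tildef^{t+1}$, and $\ell_h(f^t,g^t,\tildeh^{t+1})=0$ by the definition of $\tildeh^{t+1}$. Then:
\begin{itemize}
\item $\ell_f^{t+1}$ differs from the zero quantity only through $g^t\to\tildeg^{t+1}$. Applying (\ref{eq: exponential 3}) with the bound $\bar K^\star$ gives $|\ell_f^{t+1}(u)|\le \varepsilon^{-1}e^{\bar K^\star/\varepsilon}\int|\langle \tildeg^{t+1}(u)-g^t(u),x\rangle|\,d\nu$.
\item $\ell_h^{t+1}$ differs through $f^t\to\tildef^{t+1}$ and $g^t\to\tildeg^{t+1}$. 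This gives a bound by $\varepsilon^{-1}e^{\bar K^\star/\varepsilon}$ times $\int(|\tildef^{t+1}-f^t|+|\langle\tildeg^{t+1}-g^t,x\rangle|)\,d\mu$.
\end{itemize}
After squaring, Jensen gives $L^2$ bounds with constant at most $2M_x^2\varepsilon^{-2}e^{2\bar K^\star/\varepsilon}$.

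\textbf{Comparing $\ell_g$.} Write
\[
\ell_g^{t+1}=d_g^t+\bigl(\ell_g(\tildef^{t+1},\tildeg^{t+1},\tildeh^{t+1})-\ell_g(\tildef^{t+1},g^t,\tildeh^{t+1})\bigr).
\]
Since $d_g^t=(g^t-\tildeg^{t+1})/\eta$,
\[
\|d_g^t\|^2_{L^2(\mu)}\le \eta^{-2}\underline{\lambda}^{-1}\|\langle\tildeg^{t+1}-g^t,x\rangle\|^2_{L^2(\mu\otimes\nu)},
\]
using $\int\langle w,x\rangle^2d\nu\ge\underline\lambda\|w\|^2$. The difference term is bounded by $M_x\varepsilon^{-1}e^{\bar K^\star/\varepsilon}\int|\langle\tildeg^{t+1}-g^t,x\rangle|\,d\nu$. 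Combine the two with $(a+b)^2\le 2a^2+2b^2$. Adding all components yields a total constant at most $\frac{2}{\underline\lambda\eta^2}+\frac{5M_x^2e^{2\bar K^\star/\varepsilon}}{\varepsilon^2}$ (using $M_x\ge1$), which is $(\beta^\star)^{-2}$.

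\textbf{Main obstacle.} The step needing the most care is the $\ell_g$ comparison. Unlike the vanilla case, $\ell_g$ at the intermediate point is not zero but equals $d_g^t$. The key is to recognize that $d_g^t$ is itself controlled by the step $\tildeg^{t+1}-g^t$ through $\eta^{-1}$, which is exactly where the $2/(\underline\lambda\eta^2)$ term in $\beta^\star$ comes from. Beyond that, I must check that every exponent entering (\ref{eq: exponential 3}) is bounded by $\bar K^\star$, using Proposition~\ref{prop: sinkhorn potential modified} for both $g^t$ and $\tildeg^{t+1}$.
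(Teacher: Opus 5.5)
Your proposal is correct and follows the paper's proof essentially step for step. It uses the same three-piece decomposition with the remark after Lemma~\ref{lem: monotonicity modified} for the $g$-piece, and the same comparison of $\ell_f^{t+1},\ell_h^{t+1}$ with the vanishing quantities and of $\ell_g^{t+1}$ with $d_g^t=(g^t-\tildeg^{t+1})/\eta$ (the source of the $2/(\underline{\lambda}\eta^2)$ term). One bookkeeping point: to reach the constant $5$ you need the sharper coefficients $1$ and $2$ for the $\ell_f$ and $\ell_h$ pieces, giving $1+2+2M_x^2\le 5M_x^2$, rather than the uniform bound $2M_x^2$ you quote for both.
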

\begin{proof}
For the first inequality, decompose $D(f^{t+1},g^{t+1},h^{t+1}) - D(f^t,g^t,h^t)$ as 
\begin{equation*}
\begin{split}
D(f^{t+1},g^{t+1},h^{t+1}) - D(f^t,g^t,h^t) &=D(\tildef^{t+1},\tildeg^{t+1},\tildeh^{t+1}) - D(\tildef^{t+1},g^{t},\tildeh^{t+1}) \\
&\quad +D(\tildef^{t+1},g^{t},\tildeh^{t+1}) - D(f^{t},g^{t},\tildeh^{t+1}) \\
&
\quad +D(f^{t},g^{t},\tildeh^{t+1}) - D(f^{t},g^{t},h^{t}),
\end{split}
\end{equation*}
where we used the fact that $D(f^{t+1},g^{t+1},h^{t+1}) = D(\tildef^{t+1},\tildeg^{t+1},\tildeh^{t+1})$. From the remark following Lemma~\ref{lem: monotonicity modified}, we have 
\begin{equation*}
\begin{split}
D(\tildef^{t+1},\tildeg^{t+1},\tildeh^{t+1}) - D(\tildef^{t+1},g^{t},\tildeh^{t+1}) &\ge \frac{e^{-1/2}}{\eta} \left ( 1-\frac{\eta M_x^2}{2\varepsilon} \right) \big \| \tildeg^{t+1}-g^t \big \|_{L^2(\mu)}^2 \\
&\ge \frac{e^{-1/2}}{M_x^2\eta} \left ( 1-\frac{\eta M_x^2}{2\varepsilon} \right) \big\| \langle \tildeg^{t+1}-g^t,x \rangle  \big\|_{L^2(\mu \otimes \nu)}^2.
\end{split}
\end{equation*}
Next, using inequality (\ref{eq: exponential 2}), we observe that
\begin{equation*}
\begin{split}
D(\tildef^{t+1},g^{t},\tildeh^{t+1}) - D(f^{t},g^{t},\tildeh^{t+1})  &= \varepsilon \int \left( e^{(f^t-\tildef^{t+1})/\varepsilon} - 1-(f^t-\tildef^{t+1}) / \varepsilon \right)\, d\mu \\
&\ge \frac{e^{-2K_f^{\star}/\varepsilon}}{2\varepsilon} \big \| f^t-\tildef^{t+1} \big \|_{L^2(\mu)}^2, \quad \text{and} \\
 D(f^{t},g^{t},\tildeh^{t+1}) - D(f^{t},g^{t},h^{t})  &= \varepsilon \int \left( e^{(h^t-\tildeh^{t+1})/\varepsilon} - 1-(h^t-\tildeh^{t+1})/\varepsilon \right)\, d\nu \\
&\ge \frac{e^{-2K_h^{\star}/\varepsilon}}{2\varepsilon} \big \| h^t-\tildeh^{t+1}\big \|_{L^2(\nu)}^2.
\end{split}
\end{equation*}
Putting these together, we obtain the first inequality.

For the second inequality, we first observe that 
\begin{equation*}
\begin{split}
\ell_f^{t+1} &= e^{\tildef^{t+1}/\varepsilon} \int e^{(\langle \tildeg^{t+1},x\rangle+\tildeh^{t+1}-c)/\varepsilon} \, d\nu-e^{\tildef^{t+1}/\varepsilon}\int e^{(\langle g^t,x\rangle+\tildeh^{t+1}-c)/\varepsilon} \, d\nu, \\
\ell_g^{t+1} -d_g^t&= e^{\tildef^{t+1}/\varepsilon}\int x e^{(\langle \tildeg^{t+1},x\rangle+\tildeh^{t+1}-c)/\varepsilon}\, d\nu - e^{\tildef^{t+1}/\varepsilon}\int x e^{(\langle g^t,x\rangle+\tildeh^{t+1}-c)/\varepsilon}\, d\nu, \\
\ell_h^{t+1}&=e^{\tildeh^{t+1}/\varepsilon} \int  e^{(\tildef^{t+1}+\langle \tildeg^{t+1},x\rangle-c)/\varepsilon} \, d\mu -e^{\tildeh^{t+1}/\varepsilon} \int  e^{(f^t+\langle g^t,x\rangle-c)/\varepsilon} \, d\mu.
\end{split}
\end{equation*}
Using inequality (\ref{eq: exponential 3}), we obtain
\begin{equation*}
\begin{split}
&|\ell_f^{t+1}| \le \frac{e^{\bar{K}^{\star}/\varepsilon}}{\varepsilon}\big \| \langle \tildeg^{t+1} - g^t,x\rangle \big \|_{L^1(\nu)},\\
&\big \|\ell_g^{t+1}-d_g^t\big \| \le \frac{M_xe^{\bar{K}^{\star}/\varepsilon}}{\varepsilon} \big \| \langle \tildeg^{t+1} - g^t,x \rangle \big \|_{L^1(\nu)},\\
&|\ell_h^{t+1}| \le \frac{e^{\bar{K}^{\star}/\varepsilon}}{\varepsilon} \big(\big\|\tildef^{t+1} - f^t\big\|_{L^1(\mu)} +\big \| \langle \tildeg^{t+1} - g^t,x \rangle \big \|_{L^1(\mu)}\big). 
\end{split}
\end{equation*}
These estimates yield
\begin{equation*}
\begin{split}
&\big \|\ell_f^{t+1}\big \|_{L^2(\mu)}^2 \le \frac{e^{2\bar{K}^{\star}/\varepsilon}}{\varepsilon^2} \big \| \langle \tildeg^{t+1} - g^t,x\rangle \big \|_{L^2(\mu \otimes \nu)}^2, \\
&\big \|\ell_g^{t+1}-d_g^t\big \|_{L^2(\mu)}^2 \le \frac{M_x^2e^{2\bar{K}^{\star}/\varepsilon}}{\varepsilon^2} \big \| \langle \tildeg^{t+1} - g^t,x\rangle \big \|_{L^2(\mu \otimes \nu)}^2,\\
&\big \|\ell_h^{t+1}\big \|_{L^2(\nu)}^2 \le \frac{2e^{2\bar{K}^{\star}/\varepsilon}}{\varepsilon^2} \big(\big \|\tildef^{t+1} - f^t\big \|_{L^2(\mu)}^2 +\big \| \langle \tildeg^{t+1} - g^t,x\rangle \big \|_{L^2(\mu \otimes \nu)}^2\big).
\end{split}
\end{equation*}
Finally, we have
\begin{equation*}
\begin{split}
&\big \|\ell_g^{t+1}\big \|_{L^2(\mu)}^2 \le 2\big \| d_g^t\big \|_{L^2(\mu)}^2 + 2\big \| \ell_g^{t+1}-d_g^t\big \|_{L^2(\mu)}^2\\
&\le \frac{2}{\eta^2}\big \|\tildeg^{t+1}-g^t\big \|_{L^2(\mu)}^2 + \frac{2M_x^2e^{2\bar{K}^{\star}/\varepsilon}}{\varepsilon^2}\big \| \langle \tildeg^{t+1} - g^t,x\rangle \big \|_{L^2(\mu \otimes \nu)}^2 \\
&= \left ( \frac{2}{\underline{\lambda}\eta^2} + \frac{2M_x^2e^{2\bar{K}^{\star}/\varepsilon}}{\varepsilon^2} \right )\big \| \langle \tildeg^{t+1} - g^t,x\rangle \big \|_{L^2(\mu \otimes \nu)}^2. 
\end{split}
\end{equation*}
Combining these estimates, we obtain
\begin{equation*}
\big \| (\ell_f^{t+1},\ell_g^{t+1}, \ell_h^{t+1})\big\|_{\calH}^2 \le \left ( \frac{2}{\underline{\lambda}\eta^2} + \frac{5M_x^2e^{2\bar{K}^{\star}/\varepsilon}}{\varepsilon^2} \right ) \big(\big \|\tildef^{t+1} - f^t\big \|_{L^2(\mu)}^2 +\big \| \langle \tildeg^{t+1} - g^t,x\rangle \big \|_{L^2(\mu \otimes \nu)}^2\big),
\end{equation*}
completing the proof.
\end{proof}

\begin{proof}[Proof of Theorem \ref{thm: linear conv modified}]
Let $u_{t} :=  D(\barf,\barg,\barh) - D(f^{t},g^{t},h^{t})$. 
Combining Lemmas \ref{lem: PL modified} and \ref{lem: slope ascent modified}, we obtain
\begin{equation*}
u_t - u_{t+1} \ge \underbrace{\left [\frac{2 (1 \wedge \underline{\lambda})e^{-\bar{K}^{\star}/\varepsilon}}{\varepsilon} \times \alpha^{\star} \times (\beta^\star)^2 \right ]}_{=\tau^{\star}} u_{t+1}.
\end{equation*}
Solving this inequality yields the first claim. 

Consider a metric $\mathsf{d}$ on $\calH$ as in (\ref{eq: metric}). Observe from the slope-ascent conditions that 
\[
\begin{split}
&u_t - u_{t+1} \ge \alpha^\star\mathsf{d}_{t+1}^2, \quad \text{with} \quad  \mathsf{d}_{t+1}:= \mathsf{d}\big( (f^t,g^t,h^t), (\tildef^{t+1},\tildeg^{t+1},\tildeh^{t+1}) \big), \\
&\mathsf{d}_{t+1} \ge \beta^\star \big \|(\ell_f^{t+1},\ell_g^{t+1},\ell_h^{t+1}) \big \|_{\calH},
\end{split}
\]and from the PL inequality that
\[
2 \sqrt{u_t} \le \frac{1}{\gamma^{\star}}  \big \|(\ell_f^{t},\ell_g^{t},\ell_h^{t}) \big \|_{\calH} \quad \text{with} \quad \gamma^\star = \sqrt{\frac{(1 \wedge \underline{\lambda})e^{-\bar{K}^\star /\varepsilon}}{2\varepsilon}}. 
\]
The rest of the argument is completely analogous to Theorem \ref{thm: linear conv}.
\end{proof}

\subsection{Proof of Proposition~\ref{prop: derivative}}
Observe that 
\begin{equation*}
\begin{split}
\varepsilon \KL{\bar{\pi}}{\pi^t} &= -\int \Big ( \big(\tildef^{t+1}(u)-\barf(u)\big) + \big\langle g^t(u)-\barg(u),x\big\rangle\\
&\qquad \qquad +\big (\tildeh^{t+1}(x,y) - \barh(x,y)\big) \Big) \, d\bar{\pi}(u,x,y) \\
&= - \int (  \tildef^{t+1}-\barf) \, d\mu - \int (\tildeh^{t+1}-\barh) \, d\nu, 
\end{split}
\end{equation*}
where we used the fact that $\int x \, d\bar{\pi}_u = 0$. For any bounded measurable function $\varphi: \calX \times \calY \to \R$, Pinsker's inequality (cf. Theorem 7.10 in \cite{polyanskiy2025information}) yields
\begin{equation*}
\left | \int \varphi \, d\big(\pi_u^t - \bar{\pi}_u\big) \right |^2 \le 2\|\varphi\|_\infty^2\KL{\bar{\pi}_u}{\pi_u^t}.
\end{equation*}
Integrating with respect to $\mu$ and using the chain rule for the KL divergence (cf. Theorem 2.15 in \cite{polyanskiy2025information}) and Jensen's inequality, we conclude
\begin{equation*}
\begin{split}
\int \left | \int \varphi \, d\big(\pi_u^t - \bar{\pi}_u\big) \right |^2 \, d\mu(u) &\le 2\|\varphi\|_\infty^2 \int \KL{\bar{\pi}_u}{\pi_u^t} \, d\mu(u) = 2\|\varphi\|_\infty^2 \KL{\bar{\pi}}{\pi^t} \\
&\le \frac{2\|\varphi\|_\infty^2}{\varepsilon}\big ( \big \| \tildef^{t+1}-\barf\big \|_{L^2(\mu)} +  \big \| \tildeh^{t+1}-\barh\big \|_{L^2(\nu)} \big).
\end{split}
\end{equation*}
The right-hand side is $O((1+\tau^{\star})^{-t/2})$ by the remark following Theorem \ref{thm: linear conv modified}. This yields that
\begin{equation*}
\begin{split}
&\left\| \E_{\pi_u^t}\big[ \tilde{Y} \big] - \E_{\bar{\pi}_u}\big[ \tilde{Y} \big] \right \|_{L^2(\mu)}^2, \quad  \left\| \E_{\pi_u^t}\big[ \tilde{X}\tilde{Y}^\top \big] - \E_{\bar{\pi}_u}\big[ \tilde{X}\tilde{Y}^\top \big] \right \|_{L^2(\mu)}^2, \\ &\left\| \E_{\pi_u^t}\big[ \tilde{X}\tilde{X}^\top \big] - \E_{\bar{\pi}_u}\big[ \tilde{X}\tilde{X}^\top \big] \right \|_{L^2(\mu)}^2
\end{split}
\end{equation*}
are all $O((1+\tau^{\star})^{-t/2})$. Now, we observe
\begin{equation*}
\begin{split}
&\left \| \Big(\E_{\pi_u^t}\big[ \tilde{X}\tilde{X}^\top \big] \Big)^{-1}- \Big(\E_{\bar{\pi}_u}\big[ \tilde{X}\tilde{X}^\top \big]\Big)^{-1} \right \|_{\op}  \\
&\le \left \| \Big(\E_{\pi_u^t}\big[ \tilde{X}\tilde{X}^\top \big] \Big)^{-1} \right \|_{\op}
\left \| \Big(\E_{\bar{\pi}_u}\big[ \tilde{X}\tilde{X}^\top \big] \Big)^{-1} \right \|_{\op}\left\| \E_{\pi_u^t}\big[ \tilde{X}\tilde{X}^\top \big] - \E_{\bar{\pi}_u}\big[ \tilde{X}\tilde{X}^\top \big] \right \|_{\op}.
\end{split}
\end{equation*}
For any $v \in \mathbb{S}^{d_x-1}$,
\begin{equation*}
\E_{\pi_u^t}\big[ \langle v,\tilde{X} \rangle^2 \big] \ge e^{-\bar{K}^{\star}/\varepsilon} \E[\langle v,X \rangle^2] \ge e^{-\bar{K}^{\star}/\varepsilon} \underline{\lambda}. 
\end{equation*}
A similar estimate holds with $\pi_u^t$ replaced by $\bar{\pi}_u$. We conclude
\begin{equation*}
\left \| \Big(\E_{\pi_u^t}\big[ \tilde{X}\tilde{X}^\top \big] \Big)^{-1}- \Big(\E_{\bar{\pi}_u}\big[ \tilde{X}\tilde{X}^\top \big]\Big)^{-1} \right \|_{\op} \le e^{2\bar{K}^{\star}/\varepsilon} \underline{\lambda}^{-2} \left\| \E_{\pi_u^t}\big[ \tilde{X}\tilde{X}^\top \big] - \E_{\bar{\pi}_u}\big[ \tilde{X}\tilde{X}^\top \big] \right \|_{\op}.
\end{equation*}
The $L^2(\mu)$-norm of the left-hand side is $O((1+\tau^{\star})^{-t/4})$. Putting these estimates together, we obtain the desired result. 
\qed

\section{Discussion}
\label{sec: discussion}

We conclude this paper by discussing a few future research directions, except for the one already discussed after Theorem \ref{thm: linear conv}. 
First, it is of interest to explore general divergence penalties for the VQR problem other than the entropic one, such as the quadratic penalty that enforces sparsity of the support of the optimal coupling \cite{nutz2025quadratically}. There is a recent line of work on optimization of quadratically regularized optimal transport \cite{gonzalez2026polyak,gonzalez2025linear}.  As opposed to the standard OT case, duality theory for VQR (or more generally OT with moment constraints) with general divergence penalties has not been well understood. In addition, establishing convergence guarantees for adaptations of the Sinkhorn algorithm for VQR with quadratic penalization would require establishing quantitative estimates of the iterates. Our proofs for such estimates in the entropic VQR case rely on techniques involving the KL divergence and exponential families, which do not seem to extend directly to other divergence penalties. 

Finally, similar to many previous works on the Sinkhorn algorithm, we have assumed compactness of the supports, on which our analysis relies substantially. For standard entropic OT, there is a recent interest in establishing linear convergence for the Sinkhorn algorithm for marginals with unbounded supports; see \cite{conforti2023quantitative, eckstein2025hilbert}. An extension of our analysis to the unbounded support setting is left for future research.  

\section*{Acknowledgments}
During the preparation of the second version, the authors used ChatGPT (GPT‑5.6 Sol) for assistance with the numerical experiments, in developing the example in Appendix A, and in checking the mathematical correctness of the proofs and typographical errors. All resulting computations and mathematical arguments were independently verified by the authors, who take full responsibility for the paper.

\appendix

\section{Sharpness of the step size condition in Section \ref{sec: modified sinkhorn}}
\label{sec: counterexample}

In Section \ref{sec: modified sinkhorn}, we required the step size $\eta$ to be
\[
0 < \eta < \frac{2\varepsilon}{M_x^2}
\]
to guarantee that the dual objective increases monotonically along the iterates. We suppose here that $\eta > \frac{2\varepsilon}{M_x^2}$ and find marginals for which one iteration of the modified Sinkhorn algorithm strictly decreases the dual objective, which would imply sharpness of the step size condition.

For $a,b > 0$, we consider $d_x=d_y=1$ and set $\mu = \frac{1}{2}(\delta_{-a} + \delta_a)$ and $\nu = \frac{1}{2} ( \delta_{(-a,0)} + \delta_{(a,0)})$, which satisfy our baseline assumption. We initialize the algorithm with $(f^0,g^0,h^0)=(f,g,h)$ and obtain the update $(\tildef^1,\tildeg^1,\tildeh^1)=(\tildef,\tildeg,\tildeh)$. Consider
\[
f(\pm a) = 0, \ g(\pm a) = \pm b, \ h(\pm a,0) = \frac{a^2}{2} - \varepsilon \log \cosh (ab/\varepsilon). 
\]
A direct computation shows that $\tildef = f, \tildeh = h$, and $\tildeg (\pm a) = \pm b \mp \eta qa$ with $q = \tanh (ab/\varepsilon)$. Now, we observe that
\[
D(\tildef,\tildeg,\tildeh) - D(\tildef,g,\tildeh) = \varepsilon \big ( 1-\cosh (sq) + q \sinh (sq) \big ),
\]
where $s = \eta a^2/\varepsilon$. As $q \to 0$ (i.e., $b \to 0$), the right-hand side can be expanded as 
\[
\varepsilon s \left ( 1-\frac{s}{2} \right) q^2 + o(q^2),
\]
where the leading term becomes strictly negative if $s > 2$, i.e., $\eta > 2\varepsilon/a^2$. 

\section{Additional experiments}
\label{sec: additional experiments}

\subsection{Numerical comparison between vanilla and modified Sinkhorn algorithms}

In this appendix, we present a wall-clock time comparison between the vanilla and modified Sinkhorn algorithms. 
The numerical comparison between the vanilla and modified Sinkhorn algorithms is subtle, as it critically depends on how we numerically solve the implicit $g$-update in the vanilla Sinkhorn scheme, while doing so is outside the setting of Theorem~\ref{thm: linear conv}.
In addition, the vanilla Sinkhorn algorithm implements exact block coordinate ascent, which would be expected to incur fewer outer iterations than the modified one, while available packages to numerically solve the implicit $g$-update in inner iterations involve fine-tuning for step size, which would give a somewhat unfair advantage to the vanilla Sinkhorn algorithm compared with a naive implementation of the modified one.

With that in mind, we conduct two experiments. For both experiments, we use the synthetic Gaussian data as in Section \ref{sec: numerical} and the regularization level $\varepsilon=0.1$ for simplicity. In the first experiment,  we use (full) gradient ascent to solve the $g$-update in the vanilla Sinkhorn algorithm with the same step size as the modified Sinkhorn one, where the inner iteration is terminated when the infinity norm of the gradient in the $g$-direction (see (\ref{eq: gradient})) is at most $10^{-6}$. For each algorithm, the outer iteration is terminated when  $\|(\ell_f^t,\ell_g^t,\ell_h^t)\|_{\mathcal H} \le 10^{-5}$. We use the same step size, $\eta = \varepsilon/M_x^2$, for both algorithms. Indeed, from the coercive estimate in the remark following Lemma \ref{lem: monotonicity modified}, this step size guarantees convergence of the inner optimization for the vanilla Sinkhorn algorithm. The complexity of the vanilla Sinkhorn algorithm is $O( mnd_x + Kmnd_x)$ per outer iteration, where $K$ is the number of inner iterations. 

In the second experiment, we use adaptive step-size selection for the modified Sinkhorn algorithm and the inner gradient ascent in the vanilla Sinkhorn algorithm.
Specifically, the step size is selected using the Armijo backtracking line search in \texttt{LineSearches.jl}.
We use the package defaults for the backtracking rule and set the largest trial step size to $10\varepsilon/M_x^2$ (consistent with the stable step size experiment).
In addition to gradient ascent, we examine the damped Newton method for the $g$-update in the vanilla Sinkhorn algorithm, implemented using
\texttt{Optim.jl}.  
The gradient and Hessian are evaluated analytically rather than by numerical differentiation, and the Newton iteration is warm-started from the preceding outer iteration.
Apart from these problem-specific objective and derivative evaluations, we retain the default numerical settings of \texttt{Optim.jl}. The complexity of the vanilla Sinkhorn algorithm with the Newton $g$-update is $O( mnd_x + Km(nd_x^2 + d_x^3) )$ per outer iteration, where $K$ is the number of inner iterations. The stopping criterion for the outer iteration is $\|(\ell_f^t,\ell_g^t,\ell_h^t)\|_{\mathcal H} \le 10^{-5}$ for each implementation. 

Finally, we run algorithms under the same single-core computing environment. Before collecting timings, we execute a small preliminary run so that \texttt{Julia} has already compiled the relevant code. We measure only the time spent solving the optimization problem, excluding data generation and preprocessing. Before each timed run, we clear unused memory to reduce variation in the recorded times.

\begin{figure}[t]
    \centering
    \includegraphics[width=0.94\textwidth]
        {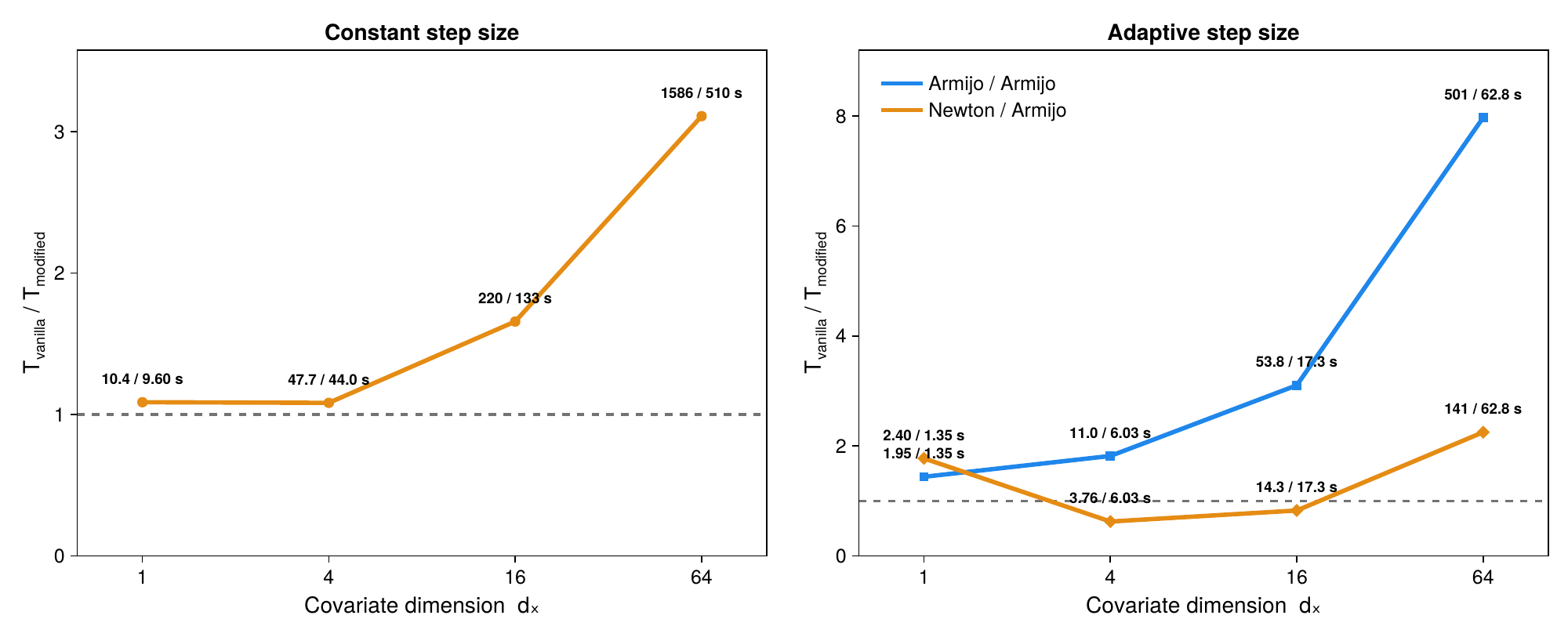}
    \caption{{\footnotesize
        \textbf{Wall-clock comparison of the vanilla and modified Sinkhorn algorithms on synthetic Gaussian data.}
        The vertical axis reports the ratio $T_{\mathrm{vanilla}}/T_{\mathrm{modified}}$ of the wall-clock times.
        Point labels report the corresponding wall-clock times, in seconds, in the order vanilla / modified. Here $\varepsilon=0.1$, and outer iterations are terminated when $\|(\ell_f^t,\ell_g^t,\ell_h^t)\|_{\mathcal H}\le 10^{-5}$. Left panel: both algorithms use the constant step size $\eta=\varepsilon/M_x^2$. Right panel: the legend gives the vanilla inner solver / modified update rule.
        }
    } 
    \label{fig:wall-clock comparison}
\end{figure}

Figure \ref{fig:wall-clock comparison} shows the result. With the common constant step size $\eta = \varepsilon/M_x^2$, both algorithms are comparable, but the modified one is faster for larger dimensions. With Armijo backtracking, both algorithms become faster, but the modified one is always advantageous. The damped-Newton update makes the vanilla algorithm more competitive at intermediate dimensions. When $d_x=64$, the vanilla algorithm is slower, which is consistent with the fact that the per-iteration cost of the Newton update is sensitive to $d_x$. 

Taken together, these results suggest that a carefully chosen second-order solver can offset the cost of the implicit $g$-update in moderate dimensions, whereas the modified scheme, which avoids an inner optimization loop, appears to scale more favorably with $d_x$ in these experiments. However, because the comparison necessarily depends on the numerical solver, stopping rules, and step-size tuning used for the implicit update, these timing results should be viewed with caution.

\FloatBarrier

\subsection{Experiment pertaining to Section \ref{sec: derivative}}
This experiment pertains to the problem considered in Section \ref{sec: derivative} and illustrates how the coefficient functions obtained from entropic VQR approach their unregularized counterparts as the regularization parameter $\varepsilon$ decreases. Consider the setting in Section \ref{sec: derivative}. To make explicit the dependence on $\varepsilon$, we write $B_0^{(\varepsilon)}$ and $B_1^{(\varepsilon)}$, which are well-defined even when $\varepsilon=0$ (replace the entropic coupling with an optimal solution to the unregularized VQR problem).  

We use the iris dataset\footnote{Available from \url{https://archive.ics.uci.edu/dataset/53/iris}.}, which contains $n = 150$ observations, and take 
\[
X = \texttt{sepal width}, \quad Y = (\texttt{sepal length},\texttt{petal length})^\top.
\]
The reference measure $\mu$ is the empirical distribution of $m=1000$ independent samples from $\operatorname{Unif}([0,1]^2)$, and $\nu$ is the empirical distribution of $(X,Y)$. We solve the unregularized VQR problem by linear programming, using \texttt{JuMP.jl} with the \texttt{HiGHS.jl} solver, and the entropic VQR problem using the modified Sinkhorn algorithm with the step size rule $\eta = \varepsilon/M_x^2$. For the modified Sinkhorn algorithm, we use the stopping criterion $\|(\ell_f^t,\ell_g^t,\ell_h^t)\|_{\mathcal H} \le 10^{-5}$. Figure \ref{fig:iris-regularization} shows that both errors decrease monotonically as $\varepsilon$ decreases. 

\begin{figure}[t]
    \centering
    \includegraphics[width=0.94\textwidth]
        {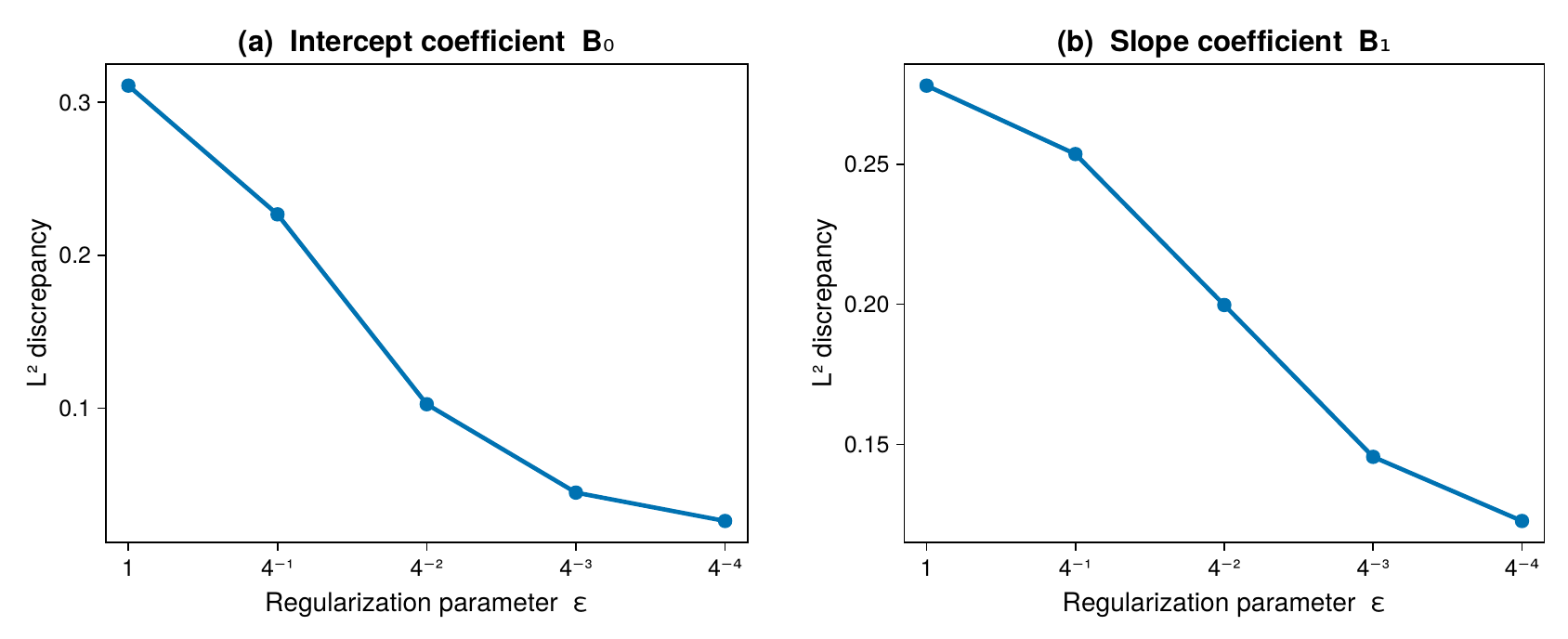}
    \caption{{\footnotesize
        \textbf{Effect of entropic regularization on the estimated coefficient
        functions in the iris experiment.}
        The left panel reports
        $\|B_0^{(\varepsilon)}-B_0^{(0)}\|_{L^2(\mu)}$ and the right panel reports
        $\|B_1^{(\varepsilon)}-B_1^{(0)} \|_{L^2(\mu)}$, where $B_0^{(0)}$ and $B_1^{(0)}$
        are computed from an optimal solution of the 
        unregularized VQR problem. The same $m=1000$ reference
        points are used in the unregularized and entropic problems.
        The regularization levels are
        $\varepsilon=4^{-p}$ for $p=0,1,2,3,4$.
        The coefficient functions are computed using centered sepal width on its original scale.
        }
    } 
    \label{fig:iris-regularization}
\end{figure}

   \FloatBarrier
\bibliographystyle{alpha}
\bibliography{reference}
\end{document}